\newtheorem{theorem}{Theorem}[section]
\newtheorem{lemma}[theorem]{Lemma}
\newtheorem{corollary}[theorem]{Corollary}
\numberwithin{equation}{theorem}
\newtheorem{exm}[theorem]{Example}
\newtheorem{xproof}{{\it Proof. }}
\newtheorem{xrem}{Remark.}
\newenvironment{example}{\begin{exm}\em}{\end{exm}}
\newenvironment{proof}{\begin{xproof}\em}{\end{xproof}}
\newenvironment{remark*}{\begin{xrem}\em}{\end{xrem}}
\def\qedhere{\hspace{0.3cm}{\rule{1ex}{2ex}}}
\newcommand\Max{\operatorname{Max}}
\newcommand\eg{e.g.}
\newcommand\st{\mid}
\newcommand\cf{\textrm{cf.}}
\newcommand\topology{\operatorname{\Omega}}
\newcommand\upsegment{{\uparrow}}
\newcommand\Vect{\textit{Vect}}
\newcommand\Set{\textit{Set}}
\newcommand\Top{\textit{Top}}
\newcommand\opp[1]{{#1}^{\textrm{op}}}
\newcommand\ident{\mathrm{id}}
\newcommand\CC{\mathbb{C}}
\newcommand\RR{\mathbb{R}}
\newcommand\NN{\mathbb{N}}
\newcommand\supp{\operatorname{supp}}
\newcommand\Sub{\operatorname{Sub}}
\newcommand\norm[1]{\| #1\|}
\newcommand\sections{C}
\newcommand\linspan[1]{\left\langle #1\right\rangle}
\newcommand\Bun{\textit{Bun}}
\newcommand\LBun{\textit{LinBun}}
\newcommand\eval{\operatorname{eval}}
\newcommand\ps[1]{\widetilde{#1}}
\newcommand\Gr{\operatorname{\boldsymbol{Gr}}}
\newcommand\GL{\operatorname{\boldsymbol{GL}}}
\newcommand\image{\operatorname{Im}}
\newcommand\ft[1]{\check{#1}}
\newcommand\cb[2]{{\boldsymbol U\!}_{#2}(#1)}
\newcommand\pr{\hat}
\newcommand\fdom[1]{{\mathfrak D}_{#1}}
\newcommand\pf[1]{{\mathfrak p}_{#1}}
\newcommand\ff[1]{{\mathfrak f}_{#1}}
\newcommand\interior{\operatorname{int}}
\begin{document}

\title{Open quotients of trivial vector bundles\thanks{Work funded by FCT/Portugal through projects EXCL/MAT-GEO/0222/2012 and PEst-OE/EEI/LA0009/2013, and by COST (European Cooperation in Science and Technology) through COST Action MP1405 QSPACE.}}
\author{{\sc Pedro Resende} and {\sc Jo{\~a}o Paulo Santos}}

\date{~}

\maketitle

\vspace*{-1cm}
\begin{abstract}
Given an arbitrary topological complex vector space $A$, a \emph{quotient vector bundle} for $A$ is a quotient of a trivial vector bundle $\pi_2:A\times X\to X$ by a fiberwise linear continuous open surjection. We show that this notion subsumes that of a Banach bundle over a locally compact Hausdorff space $X$. Hyperspaces consisting of linear subspaces of $A$, topologized with natural topologies that include the lower Vietoris topology and the Fell topology, provide classifying spaces for various classes of quotient vector bundles, in a way that generalizes the classification of locally trivial vector bundles by Grassmannians. If $A$ is normed, a finer hyperspace topology is introduced that classifies bundles with continuous norm, including Banach bundles, and such that bundles of constant finite rank must be locally trivial.
\\
\vspace*{-2mm}~\\
\textit{Keywords:} Vector bundles, Banach bundles, Grassmannians, lower Vietoris topology, Fell topology.\\
\vspace*{-2mm}~\\
2010 \textit{Mathematics Subject
Classification}: 46A99, 46M20, 54B20, 55R65
\end{abstract}

\tableofcontents

\section{Introduction}

The reduced C*-algebra $C^*_{\mathrm{red}}(G)$ of a locally compact Hausdorff \'etale groupoid $G$ (see \cites{RenaultLNMath,Paterson}), whose elements can be regarded as complex valued functions on $G$, can be ``twisted'' by 
considering instead the reduced C*-algebra $C^*_{\mathrm{red}}(\pi)$ of a Fell line bundle $\pi:E\to G$, where now the elements of the algebra are identified with sections of the bundle; see  \cites{Renault,Kumjian98}. This 
provides one way of generalizing to C*-algebras the notion of Cartan subalgebra of a Von Neumann algebra \cite{FeldmanMooreI-II}: \emph{Cartan pairs} $(A,B)$, consisting of a C*-algebra $A$ and a suitable abelian subalgebra $B$, correspond bijectively to a certain class of \'etale groupoids with Fell line bundles on them \cite{Renault}.

The motivation for the present paper stemmed from studying the construction of a Fell bundle from a Cartan pair, in particular in an attempt to generalize the class of groupoids to which it applies by taking into account that both a C*-algebra $A$ and an \'etale groupoid $G$ have associated quantales $\Max A$ \cites{KR,MP1,MP2} and $\topology(G)$ \cite{Re07}, respectively, the former consisting of all the closed linear subspaces of $A$ and the latter being the topology of $G$. In doing so it became evident that it is useful to study bundles whose construction is based on a preexisting object of global sections, such as the C*-algebra $A$ of a Cartan pair, in a way that in fact is independent of the algebraic structure of groupoids, but which instead applies to Banach bundles to begin with, and indeed to more general bundles. So the original endeavour has naturally been split into several parts, of which the present paper is the first one, where no further mention of C*-algebras or groupoids will be made. Instead the bulk of this paper will deal with completely general topological vector spaces, and on occasion locally convex or normed spaces.

If $A$ is a topological vector space and $X$ is any topological space, then $\pi_2:A\times X\to X$ is a trivial vector bundle on $X$. The vector bundles studied in this paper, termed \emph{quotient vector bundles}, consist of those bundles $\pi:E\to X$ that arise as quotients of trivial bundles by a fiberwise linear continuous open surjection $q$:
\[
\xymatrix{
A\times X\ar@{->>}[rr]^q\ar[rd]_{\pi_2}&&E\ar[dl]^{\pi}\\
&X
}
\]
We shall see that any Banach bundle $\pi:E\to X$ (in particular, any finite rank locally trivial vector bundle) on a locally compact Hausdorff space is of this kind, where $A$ can be taken to be the space $\sections_0(\pi)$ of continuous sections vanishing at infinity.

Although now without any quantale structure (since $A$ is not even an algebra), the set $\Max A$ of all the closed linear subspaces of $A$ plays an important role: equipped with suitable topologies it provides a notion of classifying space for quotient vector bundles. Concretely, these are obtained by pullback along continuous maps
\[
\kappa: X\to\Max A
\]
(or, even more generally, maps into $\Sub A$, the space of all the linear subspaces) of a universal bundle $\pi_A:E_A\to\Max A$:
\[
\xymatrix{
E_A\times_{\Max A} X\ar[rr]^-{\pi_1}\ar[d]_{\pi_2}&&E_A\ar[d]^{\pi_A}\\
X\ar[rr]_{\kappa}&&\Max A
}
\]
In this paper we study three topologies on $\Max A$. Perhaps surprisingly, two of them are well known hyperspace topologies:
\begin{itemize}
\item
The (relative) lower Vietoris topology \cites{NT96,Vietoris} classifies all the quotient vector bundles with Hausdorff fibers (the restriction on the fibers disappears if we use $\Sub A$ instead of $\Max A$).
\item The topology of Fell \cite{Fell62} classifies the quotient vector bundles whose zero section is closed --- at least provided both $A$ and $X$ are first countable.
\end{itemize}
If $A$ is normed its quotient vector bundles $\pi:E\to X$ are naturally equipped with an upper semicontinuous norm $\norm~ : E\to\RR$. In this case a third topology on $\Max A$, referred to as the \emph{closed balls topology}, coarser than the (full) Vietoris topology but finer than the Fell topology, classifies the bundles for which the norm on $E$ is continuous. In particular, if $A$ is a Banach space it classifies Banach bundles over first countable Hausdorff spaces.

Quotient vector bundles are not necessarily locally trivial, and no classification of the locally trivial ones for arbitrary $A$ is provided in general. But something can be said about quotient vector bundles whose fibers are all of the same finite dimension $d$. These are classified by the subspace $\Max_d A\subset \Max A$ that contain the closed linear subspaces of codimension $d$ in $A$. We show that such bundles are locally trivial if $A$ is normed and $\Max_d A$ is equipped with the closed balls topology. As a corollary, this yields a fact that is stated but not proved in \cite{FD1}*{p.\ 129}, namely that Banach bundles on locally compact Hausdorff spaces whose fibers are of constant finite dimension are locally trivial.

The role played by our spaces $\Max A$ with respect to quotient vector bundles is analogous to that of Grassmannians for locally trivial vector bundles, and at the end of the paper we provide a detailed comparison between them, in particular showing explicitly that $\Gr(d,\CC^n)$, the Grassmannian of $d$-dimensional subspaces of $\CC^n$, is homeomorphic to $\Max_{n-d}\CC^n$.

\section{Preliminaries}\label{sec:defs}

We begin by fixing basic terminology and notation.

\paragraph{Topological vector spaces.}

By a \emph{topological vector space} will be meant a complex topological vector space without assuming any topological separation axioms except where specified otherwise.
We recall that if the topology of a topological vector space is $T_1$ then it is Hausdorff (in fact completely regular), and that a Hausdorff vector space is finite dimensional if and only if it is locally compact. Moreover, the topology of a finite dimensional Hausdorff vector space is necessarily the Euclidean topology.
We define the following notation, for an arbitrary topological vector space $A$:
\begin{itemize}
\item $\Sub A$ is the set of all the linear subspaces of $A$;
\item $\Max A$ is the subset of $\Sub A$ consisting of all the topologically closed linear subspaces.
\end{itemize}
The notation $\Max A$ is borrowed from quantale theory, where $\Max A$, equipped with a natural quantale structure, plays the role of spectrum of a C*-algebra $A$. See \cites{Mulvey-enc,MP1,MP2,KR}.

For each subset $S\subset A$, we denote by $\linspan S$ the linear span of $S$. If $S$ is a finite subset $\{a_1,\ldots,a_n\}$ we also write $\linspan{a_1,\ldots,a_n}$ instead of $\linspan S$.

\begin{example}
$\Sub A$ and $\Max A$ may coincide, for instance if $A$ has the discrete topology, or if $A$ is a Hausdorff finite dimensional vector space as in the following examples:
\begin{itemize}
\item $\Sub\CC=\Max\CC=\{\{0\},\CC\}$;
\item $\Sub \CC^2 = \Max \CC^2=\{\{0\},\CC^2\}\cup\CC P^1$;
\item $\Sub \CC^n=\Max \CC^n=\{\{0\},\CC^n\}\cup\coprod_{r=1}^{n-1}\Gr(r,\CC^n)$.
\end{itemize}
\end{example}

\paragraph{Bundles.}

Let $X$ be a topological space. By a \emph{bundle} on $X$ will always be meant a topological space $E$ equipped with a continuous surjection
\[\pi:E\to X\;,\]
which is referred to as the \emph{projection} of the bundle.
For each element $x\in X$ we refer to the subspace $\pi^{-1}(\{x\})\subset E$ as the \emph{fiber} of $\pi$ over $x$, and we use the following notation for all $x\in X$ and open sets $U\subset X$:
\begin{eqnarray*}
E_x&=&\pi^{-1}(\{x\})\;,\\
E_U&=&\pi^{-1}(U)\;,\\
\pi_U&=&\pi\vert_{E_U}:E_U\to U\;.
\end{eqnarray*}

If $\rho:F\to X$ and $\pi:E\to X$ are bundles, by a \emph{map} of bundles over $X$,
\[
h:\rho\to \pi\;,
\]
will be meant a continuous map $h:F\to E$ such that $\pi\circ f=\rho$. The category of bundles over $X$ and their maps is denoted by $\Bun(X)$.

If $\pi:E\to X$ is a bundle, $Y$ is a topological space and $g:Y\to X$ is a continuous map, the pullback $E\times_X Y$ together with its second projection $\pi_2:E\times_X Y\to Y$ defines a bundle $g^*(\pi)$ on $Y$:
\[
\xymatrix{
E\times_X Y\ar[rr]^{\pi_1}\ar[d]_{g^*(\pi)=}^{\pi_2}&&E\ar[d]^{\pi}\\
Y\ar[rr]_{g}&&X\;.
}
\]

\paragraph{Continuous sections.}

A \emph{continuous section} of a bundle $\pi:E\to X$ is a continuous function $s:X\to E$ such that $\pi\circ s$ is the identity on $X$. The set of all the continuous sections of the bundle is denoted by $\sections(\pi)$. We say that the bundle \emph{has enough sections} if for all $e\in E_x$ there is a continuous section $s$ such that $s(x)=e$.

Any map $h:\rho\to\pi$ in $\Bun(X)$ induces a mapping \[h_*:\sections(\rho)\to\sections(\pi)\] given by postcomposition,
\[
\xymatrix{
F\ar[rr]^h&&E\\
&X\ar[ur]_{h_*(s)=h\circ s}\ar[ul]^{s}
}
\]
and thus we obtain a functor $(~)_*:\Bun(X)\to\Set$.
In addition, given a pullback
\[
\xymatrix{
E\times_X Y\ar[rr]^{\pi_1}\ar[d]_{\pi_2}&&E\ar[d]^{\pi}\\
Y\ar[rr]_{g}&&X
}
\]
there is a mapping
$g^*:\sections(\pi)\to\sections(\pi_2)$, defined for all $s\in \sections(\pi)$ and $y\in Y$ by
\[
g^*(s)(y) = \bigl(s(g(y)),y\bigr)\;,
\]
so we obtain a contravariant functor $(~)^*:\opp{(\Top/X)}\to\Set$.
Moreover, the following diagram commutes:
\[
\xymatrix{
E\times_X Y\ar[rr]^{\pi_1}&&E\\
Y\ar[rr]_g\ar[u]^{g^*(s)}&&X\ar[u]_{s}
}
\]
We note that if $\pi$ has enough sections then so does the pullback $g^*(\pi)$.

\paragraph{Bundles with linear structure.}

Let $\pi:E\to X$ be a bundle. By a \emph{linear structure} on the bundle will be meant a structure of vector space on each fiber such that the operations of scalar multiplication and vector addition are globally continuous when regarded as maps $\CC\times E\to E$ and $E\times_X E\to E$, respectively, and such that the \emph{zero section} of $\pi$, which sends each $x\in X$ to $0_x$ (the zero of $E_x$), is continuous. Hence, a bundle equipped with a linear structure is a very loose form of vector bundle. We shall refer to bundles with such linear structures as \emph{linear bundles}.
The set $\sections(\pi)$ of continuous sections of a linear bundle $\pi$ is a vector space whose operations are computed fiberwise.

The \emph{category of linear bundles} over $X$, $\LBun(X)$, has the linear bundles as objects and, given linear bundles $\pi:E\to X$ and $\rho:F\to X$, a morphism
\[
h:\rho\to\pi
\]
is a map $h:\rho\to \pi$ in $\Bun(X)$ which is fiberwise linear: for each $x\in X$, $h$ restricts to a linear map $F_x\to E_x$. Hence, the induced mapping on sections $h_*:\sections(\rho)\to\sections(\pi)$
is linear, so we obtain a functor $(~)_*:\LBun\to\Vect$.
In addition, if $g:Y\to X$ is a continuous map the pullback $g^*(\pi)$ is a linear bundle, and we obtain a contravariant functor $(~)^*:\opp{(\Top/X)}\to\Vect$.

A \emph{trivial vector bundle} is a linear bundle of the form $\pi_2:A\times X\to X$ for some topological vector space $A$, with the obvious algebraic structure, and a \emph{locally trivial vector bundle} is a linear bundle $\pi:E\to X$ for which each $x\in X$ has an open neighborhood $U$ such that the restricted bundle $\pi_U$ is isomorphic to the trivial vector bundle $\pi_2:E_x\times U\to U$.
The pullback $g^*(\pi)$ of a locally trivial vector bundle $\pi:E\to X$ along $g:Y\to X$ is itself locally trivial.

The following simple fact will be useful later on.

\begin{lemma}\label{lemma:loctriv}
For any locally trivial vector bundle $\pi:E\to X$ with Hausdorff fibers, the image of the zero section is a closed set of $E$.
\end{lemma}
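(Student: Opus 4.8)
The plan is to establish closedness locally, exploiting the fact that being a closed subset is detected on the members of an open cover. Since $\pi$ is continuous and, by local triviality, $X$ is covered by open sets $U$ over which the bundle trivializes, the sets $E_U=\pi^{-1}(U)$ form an open cover of $E$. For such a cover a subset $C\subseteq E$ is closed in $E$ precisely when each intersection $C\cap E_U$ is closed in $E_U$: the forward implication is immediate, and conversely, given $e\in E\setminus C$ one picks $U$ with $e\in E_U$, observes that $E_U\setminus C$ is open in $E_U$ and hence open in $E$, and notes that it is a neighbourhood of $e$ contained in $E\setminus C$. Writing $z:X\to E$ for the zero section, it therefore suffices to show that $z(X)\cap E_U$ is closed in $E_U$ for each trivializing $U$.

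Fix such a $U$, together with a linear bundle isomorphism $\phi:E_U\to E_x\times U$ over $U$ (where $E_x$ is the relevant fiber). Because $\phi$ commutes with the projections and $\pi\circ z=\ident_X$, one has $z(X)\cap E_U=z(U)$; and because $\phi$ is fiberwise linear it sends the zero of each fiber $E_y$ to $(0,y)$, so that $\phi(z(U))=\{0\}\times U$. As $\phi$ is a homeomorphism, the problem reduces to showing that $\{0\}\times U$ is closed in $E_x\times U$.

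For the final step, $E_x$ is a Hausdorff topological vector space, hence $T_1$, so $\{0\}$ is a closed subset of $E_x$; therefore $\{0\}\times U$ is closed in $E_x\times U$, which completes the proof.

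I do not anticipate a genuine obstacle here; the argument is essentially routine. The only points requiring a little care are the local-to-global criterion for closed sets — which relies on each $E_U$ being open in $E$, i.e.\ on continuity of $\pi$ — and the verification that the trivializing isomorphism, being simultaneously a map over $U$ and fiberwise linear, carries the zero section exactly onto $\{0\}\times U$; granted these, Hausdorffness of the fibers supplies the conclusion.
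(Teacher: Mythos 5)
Your proof is correct and follows essentially the same route as the paper's: both reduce to a trivializing chart, identify the zero section there with $\{0\}\times U$, and use Hausdorffness of the fiber to conclude that $\{0\}$ is closed. The paper merely phrases this as exhibiting, for each nonzero $v$, the open neighbourhood $f\bigl((V\setminus\{0\})\times U\bigr)$ avoiding the zero section, whereas you invoke the local-to-global criterion for closedness; the content is identical.
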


\begin{proof}
Let $v\in E$, $v\neq 0$, 
and let $U$ be a neighborhood of $\pi(v)\in X$
such that $E_U$ is isomorphic to a trivial vector bundle. Let $f: V\times U\to E_U$ be an isomorphism in $\LBun(U)$. Then $V$ is a Hausdorff space and
$f\bigl((V\setminus\{0\})\times U\bigr)\subset E\setminus\{0\}$ is an open set in $E$ containing $v$.
\qedhere
\end{proof}

\section{Quotient vector bundles}

Now we introduce the central notion of vector bundle in this paper.

\paragraph{Basic definitions and properties.}

The condition that a bundle \[\pi:E\to X\] has enough sections is equivalent to the requirement that the \emph{evaluation mapping}
\[\xymatrix{
\sections(\pi)\times X\ar[rrr]_{(s,x)\mapsto s(x)}^{\eval}&&&E
}
\]
be surjective. This fact suggests the following definition:

By a \emph{quotient bundle} will be meant a triple $(\pi,A,q)$ consisting of a bundle $\pi:E\to X$, a topological space $A$ and a map $q$ of bundles over $X$,
\[
\xymatrix{
A\times X\ar[rr]^{q}\ar[dr]_{\pi_2} && E\ar[dl]^{\pi}\\
&X
}
\]
referred to as the \emph{quotient map},
which is both surjective and open. This makes $E$ homeomorphic to a topological quotient of $A\times X$, hence the terminology. For each $x\in X$ we also write
\[
q_x:A\to E_x
\]
for the map defined by $q_x(a)=q(a,x)$.

\begin{lemma}\label{lem:subspaceeqquotient}
Let $(\pi:E\to X,A,q)$ be a quotient bundle. The following conditions hold:
\begin{enumerate}
\item $\pi$ is an open map.
\item The topology of each fiber $E_x$ as a subspace of $E$ coincides with the topology of $E_x$ regarded as a quotient of $A$.
\item The maps $q_x$ are continuous and open surjections.
\end{enumerate}
\end{lemma}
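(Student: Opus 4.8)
The plan is to prove (1) directly, then (3), and finally to deduce (2) from (3) via the standard fact that a continuous open surjection is automatically a quotient map; so I would actually present the three items in the order (1), (3), (2).

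For (1): given an open set $U\subseteq E$, surjectivity of $q$ gives $U=q(q^{-1}(U))$, whence $\pi(U)=\pi\bigl(q(q^{-1}(U))\bigr)=\pi_2(q^{-1}(U))$ using $\pi\circ q=\pi_2$. Now $q^{-1}(U)$ is open by continuity of $q$, and the product projection $\pi_2:A\times X\to X$ is an open map, so $\pi(U)$ is open.

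For (3): surjectivity of $q_x$ follows from that of $q$ --- given $e\in E_x$, pick $(a,y)$ with $q(a,y)=e$; then $y=\pi_2(a,y)=\pi(q(a,y))=\pi(e)=x$, so $e=q_x(a)$. Continuity is clear, since $q_x$ factors through the inclusion $A\cong A\times\{x\}\hookrightarrow A\times X$, the map $q$, and the corestriction onto $E_x$ (which indeed lands in $E_x$ by the same computation). Openness rests on the set identity
\[
q_x(V)=q(V\times X)\cap E_x
\]
valid for every open $V\subseteq A$: the inclusion $\subseteq$ is immediate, and $\supseteq$ again uses that $q(a,y)\in E_x$ forces $y=x$. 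Since $V\times X$ is open and $q$ is open, $q(V\times X)$ is open in $E$, so $q_x(V)$ is open in the subspace $E_x$.

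For (2): by (3), $q_x:A\to E_x$ is a continuous open surjection when $E_x$ carries the subspace topology; but any continuous open surjection $f$ is a quotient map, since $f^{-1}(W)$ open implies $W=f(f^{-1}(W))$ open by surjectivity and openness, while the converse direction is just continuity. Hence the subspace topology on $E_x$ coincides with the quotient topology determined by $q_x$, which is exactly the assertion. I do not expect a genuine obstacle here; the one point needing a little care is the identity $q_x(V)=q(V\times X)\cap E_x$, on which both the openness of $q_x$ and the quotient-map property hinge, and which relies entirely on the compatibility $\pi\circ q=\pi_2$.
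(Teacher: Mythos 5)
Your proposal is correct and follows essentially the same route as the paper: item (1) via $\pi(U)=\pi_2(q^{-1}(U))$, and items (2)--(3) via the openness of the restriction of $q$ over the fiber, with (2) deduced from the fact that a continuous open surjection is a quotient map. The paper merely packages your key identity $q_x(V)=q(V\times X)\cap E_x$ as an instance of the general fact that an open map restricts to an open map over any subset of its codomain.
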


\begin{proof}
If $(\pi,A,q)$ is a quotient bundle then $q$ is a morphism in $\Bun(X)$ from $\pi_2:A\times X\to X$ to $\pi$, and thus the openness of $\pi$ is a consequence of the openness of $\pi_2$ plus the continuity and surjectivity of $q$: if $U\subset E$ is open then
\[
\pi(U)=\pi(q(q^{-1}(U)))=\pi_2(q^{-1}(U))\;.
\]
The second and third properties follow from a simple property of open maps: if $f:Y\to Z$ is an open map then for every subset $S\subset Z$ the restriction $f\vert_{f^{-1}(S)}:f^{-1}(S)\to S$ is itself an open map if $S$ and $f^{-1}(S)$ are equipped with their respective subspace topologies, because for every open set $U\subset Y$ we have $f(f^{-1}(S)\cap U)=S\cap f(U)$.
\qedhere
\end{proof}

\paragraph{Pullbacks.} Quotient bundles are well behaved under pullbacks:

\begin{lemma}\label{qbpbs}
The class of quotient bundles is closed under pullbacks along continuous base maps.
\end{lemma}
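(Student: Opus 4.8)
The plan is to re-use the same fiber space $A$ and write down an explicit quotient map for the pulled-back bundle. So, given a quotient bundle $(\pi\colon E\to X,A,q)$ and a continuous map $g\colon Y\to X$, I would consider
\[
\ps q\colon A\times Y\to E\times_X Y\;,\qquad \ps q(a,y)=\bigl(q(a,g(y)),y\bigr)\;,
\]
and show that $\bigl(g^*(\pi),A,\ps q\bigr)$ is a quotient bundle. Concretely this means verifying four things: that $\ps q$ is a map of bundles over $Y$ from $\pi_2\colon A\times Y\to Y$ to $g^*(\pi)$; that it is continuous; that it is surjective; and that it is open. (Surjectivity of $\ps q$ also gives, via $\pi_2=\pi_2\circ\ps q$, that $g^*(\pi)$ is a genuine bundle.)

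The first three verifications are routine. The identity $\pi_2\circ\ps q=\pi_2$ holds because the second coordinate of $\ps q(a,y)$ is $y$. Continuity follows since $(a,y)\mapsto\bigl(q(a,g(y)),y\bigr)$ is manifestly continuous into $E\times Y$ and its image lands in the subspace $E\times_X Y$, because $\pi(q(a,g(y)))=\pi_2(a,g(y))=g(y)$ as $q$ is a bundle map over $X$. Surjectivity uses surjectivity of $q$: if $(e,y)\in E\times_X Y$ then $\pi(e)=g(y)$, and writing $e=q(a,x)$ forces $x=\pi(e)=g(y)$, so $(e,y)=\ps q(a,y)$.

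The hard part will be openness of $\ps q$. Let $O\subseteq A\times Y$ be open and take a point $\ps q(a,y)\in\ps q(O)$ with $(a,y)\in O$; choose open $V\subseteq A$ and $W\subseteq Y$ with $(a,y)\in V\times W\subseteq O$. The naive move of pushing $V\times W$ forward by $\ident_A\times g$ fails because $g$ need not be open; instead I would fatten the base coordinate to all of $X$. Since $q$ is open, $q(V\times X)$ is open in $E$, hence
\[
\bigl(q(V\times X)\times W\bigr)\cap(E\times_X Y)
\]
is an open neighborhood of $\ps q(a,y)$ in the pullback. It is contained in $\ps q(O)$ by the fiber condition: if $(e,y')$ lies in it, then $e=q(a',x')$ for some $a'\in V$ and $x'\in X$, and $x'=\pi(e)=g(y')$, so $(a',y')\in V\times W\subseteq O$ and $(e,y')=\ps q(a',y')$. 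Hence $\ps q(O)$ is open, completing the argument. If desired one can also note that, under these identifications, $\ps q$ is the pullback of $q$ along $g$ in the obvious sense, so the construction is functorial; I would leave that routine diagram chase unspelled.
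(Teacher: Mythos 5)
Your proof is correct, and the map you write down, $\ps q(a,y)=\bigl(q(a,g(y)),y\bigr)$, is exactly the map $q'$ the paper uses; the routine verifications (bundle map over $Y$, continuity, surjectivity) match as well. Where you diverge is the openness step. The paper argues formally: it notes that the outer rectangle $A\times Y\to A\times X\to X$ over $Y\to X$ is a pullback, so by the pasting lemma the square with vertical arrows $q'$ and $q$ and horizontal arrows $\ident\times g$ and $\pi_1$ is itself a pullback in $\Top$, and then it quotes the fact that open maps are stable under pullback along continuous maps. You instead verify openness directly: the image of a basic box is $\ps q(V\times W)=\bigl(q(V\times X)\times W\bigr)\cap(E\times_X Y)$, which is open in the subspace topology because $q(V\times X)$ is open in $E$, and the fiber condition $\pi(e)=g(y')$ is what forces a preimage $(a',x')$ of $e$ with $a'\in V$ to satisfy $x'=g(y')$, so the box maps onto this set. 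Your computation is precisely what one gets by unwinding the proof that open maps are pullback-stable, so the mathematical content is the same; the paper's version is shorter and exhibits $q'$ as a pullback of $q$ (which you note in passing), while yours is self-contained and avoids invoking that stability fact as a black box. One small remark: your identity $\ps q(V\times W)=\bigl(q(V\times X)\times W\bigr)\cap(E\times_X Y)$ actually shows the image of every basic open set is open, which is marginally cleaner than the pointwise neighborhood phrasing you use.
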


\begin{proof}
Let $(\pi:E\to X,A,q)$ be a quotient bundle, and $g:Y\to X$ a continuous map. The universal property of the pullback $g^*(\pi)$ ensures that there is a unique continuous map $q'$ that makes the following diagram commute:
\[
\xymatrix{
A\times Y\ar@{.>}[rd]|{q'}\ar[r]^-{\ident\times g}\ar@/_/[rdd]_{\pi_2}&A\times X\ar[dr]^q\\
&E\times_X Y\ar[d]^{g^*(\pi)}\ar[r]_-{\pi_1}&E\ar[d]^{\pi}\\
&Y\ar[r]_g&X
}
\]
The map $q'$ is defined by $q'(a,y)=(q(a,g(y)),y)$ for all $y\in Y$ and $a\in A$, and thus it is surjective. Moreover, the outer rectangle
\[
\xymatrix{
A\times Y\ar[r]^{\ident\times g}\ar[d]_{\pi_2}&A\times X\ar[d]^{\pi\circ q}\\
Y\ar[r]_g&X
}
\]
is itself a pullback diagram, and thus so is the upper rectangle
\[
\xymatrix{
A\times Y\ar[r]^{\ident\times g}\ar[d]_{q'}&A\times X\ar[d]^{q}\\
E\times_X Y\ar[r]_-{\pi_1}&\ E\;.
}
\]
Hence, $q'$ is open because it is the pullback of the open map $q$ along the continuous map $\pi_1$, and therefore $(g^*(\pi),A,q')$ is a quotient bundle. \qedhere
\end{proof}

\paragraph{Continuous sections.}

Let $(\pi:E\to X, A,q)$ be a quotient bundle.
For each $a\in A$ we define the continuous section
\[
\hat a:X\to E
\]
by $\hat a(x) = q(a,x)$. We denote the set of such sections by $\hat A$, and regard it as a topological space whose topology is the quotient topology obtained from $A$. Then, for each pair of open sets $U\subset X$ and $\Gamma\subset A$ we have the following subset of $E$:
\[
E_{\Gamma,U} = \bigcup_{a\in\Gamma} \hat a(U)\;.
\]

\begin{lemma}\label{qlbreflection}
Let $(\pi:E\to X,A,q)$ be a quotient bundle. The following conditions hold:
\begin{enumerate}
\item The family of subsets $E_{\Gamma,U}$ as defined above is a basis for the topology of $E$.
\item $(\pi,\hat A,\eval)$ is a quotient bundle.
\end{enumerate}
\end{lemma}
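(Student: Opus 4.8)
For part (1), the plan is to observe that $E_{\Gamma,U}$ is precisely the image $q(\Gamma\times U)$ of the basic open rectangle $\Gamma\times U\subset A\times X$, and then to invoke the general fact that an open surjection carries a basis of its domain onto a basis of its codomain. Concretely: since $q$ is open, each $E_{\Gamma,U}=q(\Gamma\times U)$ is open in $E$; and given an open set $W\subset E$ together with a point $e\in W$, one chooses $(a,x)$ with $q(a,x)=e$ (possible since $q$ is surjective), picks a rectangle $\Gamma\times U\subset q^{-1}(W)$ with $a\in\Gamma$ and $x\in U$, and observes that $e\in E_{\Gamma,U}=q(\Gamma\times U)\subset q(q^{-1}(W))=W$, the last equality by surjectivity of $q$. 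This part is routine.

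For part (2), write $\phi:A\to\hat A$, $a\mapsto\hat a$, for the canonical surjection, so that by definition $\hat A$ carries the quotient topology for $\phi$ and $\eval\circ(\phi\times\ident_X)=q$. Three of the four conditions for $(\pi,\hat A,\eval)$ to be a quotient bundle are then immediate. That $\pi\circ\eval=\pi_2$ follows from $\pi\circ q=\pi_2$ together with the surjectivity of $\phi\times\ident_X$; that $\eval$ is surjective follows from the surjectivity of $q$; and that $\eval$ is open follows because any open $W\subset\hat A\times X$ satisfies $W=(\phi\times\ident_X)\bigl((\phi\times\ident_X)^{-1}(W)\bigr)$, so $\eval(W)=q\bigl((\phi\times\ident_X)^{-1}(W)\bigr)$ is open by openness of $q$.

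The substantive point is the continuity of $\eval$. The plan is to obtain it from the fact that $\phi\times\ident_X$ is a quotient map: since $\eval\circ(\phi\times\ident_X)=q$ is continuous, continuity of $\eval$ is then automatic. To see that $\phi\times\ident_X$ is a quotient map it suffices to see that $\phi$ is an \emph{open} map, for then $\phi\times\ident_X$ is an open surjection. And $\phi$ is open because, using fiberwise linearity of $q$, one has $\hat a=\hat b$ exactly when $a-b$ lies in the linear subspace $N:=\bigcap_{x\in X}\ker q_x$; hence for every open $\Gamma\subset A$,
\[
\phi^{-1}\bigl(\phi(\Gamma)\bigr)=\Gamma+N=\bigcup_{n\in N}(\Gamma+n),
\]
a union of translates of $\Gamma$ and so open, since translations are homeomorphisms of the topological vector space $A$; thus $\phi(\Gamma)$ is open in $\hat A$.

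I expect the openness of $\phi$ — equivalently, the assertion that $\phi\times\ident_X$ is a quotient map — to be the crux: in general the product of a quotient map with an identity need not be a quotient map, so the argument really does hinge on the explicit description $\phi^{-1}(\phi(\Gamma))=\Gamma+N$ afforded by fiberwise linearity (alternatively one can test continuity of $\eval$ against the basis from part (1), but this still reduces to the same openness statement). With $\eval$ shown to be a continuous, open, surjective bundle map over $X$, the triple $(\pi,\hat A,\eval)$ is a quotient bundle by definition, and both parts are done.
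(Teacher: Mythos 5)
Your part (1) and your openness argument for $\eval$ coincide with the paper's proof: $E_{\Gamma,U}=q(\Gamma\times U)$ with $q$ open gives the basis, and $\eval(W)=q\bigl((\phi\times\ident)^{-1}(W)\bigr)$, with $\phi:A\to\hat A$ the canonical surjection, gives openness of $\eval$. Up to that point the proposal and the paper are the same argument.

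Where you go beyond the paper is the continuity of $\eval$, which the paper's proof does not address at all. You are right to single this out as the delicate step, and right that it reduces to $\phi$ being an open map (the product of a quotient map with an identity need not be a quotient map). However, your proof that $\phi$ is open — namely $\hat a=\hat b$ iff $a-b\in N$ with $N=\bigcap_{x}\ker q_x$ a linear subspace, whence $\phi^{-1}(\phi(\Gamma))=\Gamma+N$ is a union of translates of $\Gamma$ — uses the vector space structure of $A$ and the fiberwise linearity of $q$. The lemma is stated for \emph{quotient bundles}, where $A$ is only a topological space and $q$ only an open continuous surjective bundle map; the linear notion (quotient \emph{vector} bundle, where $N$ becomes the bundle radical) is introduced only after this lemma. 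So as written your continuity argument establishes the claim only in the linear case. In that case it is a genuine improvement on the paper, whose proof is silent on continuity; but for the statement in its stated generality you would need either a different argument that $\phi$ is open (equivalently, that $\hat a\in\interior\,\phi(\Gamma)$ for every open $\Gamma\ni a$), which does not seem to follow from openness of $q$ alone, or to restrict attention to the linear setting in which the lemma is actually used later.
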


\begin{proof}
The sets $E_{\Gamma,U}$ form a basis for the topology of $E$  because $E_{\Gamma,U}=q(\Gamma\times U)$ and $q$ is open. For the second property
let $\chi:A\to \hat A$ be the quotient $a\mapsto\hat a$, and let $U\subset \hat A\times X$ be open. Then, denoting by $\phi$ the surjective map $\chi\times\ident$, we have
\[
\eval(U)=q(\phi^{-1}(U))\;,
\]
and thus $\eval:\hat A\times X\to E$ is open. \qedhere
\end{proof}

If $A=\hat A$ and $q=\eval$ we say that the quotient bundle is \emph{sectional}, and we denote it simply as the pair $(\pi,A)$.

\paragraph{Quotient bundles with linear structure.}

By a \emph{quotient vector bundle} will be meant a quotient bundle $(\pi:E\to X,A,q)$ such that $\pi$ is a linear bundle, $A$ is a topological vector space, and the open surjection $q:A\times X\to E$ is fiberwise linear.

\begin{example}\label{exm:banachasquotient}
As we shall see below, every Banach bundle on a locally compact Hausdorff space can be made a quotient vector bundle (\cf\ Theorem~\ref{banachasquotient}).
\end{example}

Every quotient vector bundle $(\pi,A,q)$ determines a \emph{kernel map}
\[
\kappa: X\to \Sub A
\]
defined by $\kappa(x) = q^{-1}(0_x)=\ker q_x$ for each $x\in X$.
We denote by $\mathfrak B(\pi,A,q)$ the intersection $\bigcap_{x\in X}\kappa(x)$, and call it the \emph{bundle radical} of $(\pi,A,q)$. Clearly, the quotient
$A\to\hat A$
defined by $a\mapsto\hat a$ is linear, and its kernel is $\mathfrak B(\pi,A,q)$. Hence, we have an isomorphism $\hat A\cong A/\mathfrak B(\pi,A,q)$, and the bundle is isomorphic to a sectional quotient vector bundle if and only if its bundle radical is $\{0\}$.

The following simple property will be useful later:

\begin{lemma}\label{lemmaJP-linindep}
Let $(\pi:E\to X,A,q)$ be a quotient vector bundle with kernel map $\kappa$, let \[a_1,\dots,a_n\in A\] be linearly independent, and let 
$V\subset A$ be the linear subspace spanned by $a_1,\ldots,a_n$.
If $x\in X$ and $\kappa(x)\cap V=\{0\}$ then $\hat a_1(x),\dots,\hat a_n(x)$ are linearly independent vectors of $E_x$.
\end{lemma}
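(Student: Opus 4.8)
The plan is to argue directly from the fiberwise linearity of the quotient map, since $\hat a_i(x)$ is by definition $q(a_i,x)=q_x(a_i)$ and $q_x:A\to E_x$ is a linear map with kernel $\kappa(x)$. So I would start by supposing a linear dependence $\sum_{i=1}^n\lambda_i\hat a_i(x)=0_x$ in $E_x$ for scalars $\lambda_1,\dots,\lambda_n\in\CC$, and rewrite the left-hand side using linearity of $q_x$ as $q_x\bigl(\sum_{i=1}^n\lambda_i a_i\bigr)$.

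From there the vector $v:=\sum_{i=1}^n\lambda_i a_i$ lies in $\ker q_x=\kappa(x)$ by the displayed identity, and it also lies in $V=\linspan{a_1,\dots,a_n}$ by construction; hence $v\in\kappa(x)\cap V=\{0\}$, so $v=0$. Finally, linear independence of $a_1,\dots,a_n$ forces $\lambda_1=\dots=\lambda_n=0$, which is exactly the claim. There is no real obstacle here: the only facts used are that $q_x$ is linear (part of the definition of quotient vector bundle) and that $\kappa(x)=\ker q_x$ (the definition of the kernel map), together with elementary linear algebra; I would not expect the argument to occupy more than a few lines.
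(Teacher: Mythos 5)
Your argument is correct and is essentially the paper's own proof: the paper phrases it via the linear isomorphism $E_x\cong A/\kappa(x)$, which is just another way of saying that $q_x$ is linear with kernel $\kappa(x)$, and then concludes exactly as you do that $\sum z_i a_i\in V\cap\kappa(x)=\{0\}$ forces all coefficients to vanish. No differences worth noting.
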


\begin{proof}
There is a linear isomorphism $E_x\cong A/\kappa(x)$. Hence, if $\sum z_i\hat a_i(x)=0$ in $A/\kappa(x)$ for some $z_1,\dots,z_n\in\mathbb C$,
we have $\sum z_i a_i\in V\cap \kappa(x)$, and thus $z_1=\dots=z_n=0$. \qedhere
\end{proof}

\paragraph{Construction of quotient vector bundles.}

Let $A$ be a topological vector space, $X$ a topological space, and \[\kappa:X\to \Sub A\] an arbitrary map. Define $E$ to be the quotient space of $A\times X$ by the equivalence relation given, for all $a,b\in A$ and $x,y\in X$, by 
\[(a,x)\sim(b,y) \iff x=y \textrm{ and } a-b\in \kappa(x)\;.\]
Let us fix some notation and terminology.
\begin{enumerate}
\item We shall write $[a,x]$ for the equivalence class of $(a,x)\in A\times X$.
\item The \emph{quotient map induced by $\kappa$} is the map $q:A\times X\to E$ given by $q(a,x)=[a,x]$ for each $(a,x)\in A\times X$.
\item The \emph{bundle induced by $\kappa$} is the continuous surjection $\pi:E\to X$ given by the universal property of $q$; that is, for each $(a,x)\in A\times X$ we have
$\pi([a,x])=x$.
\end{enumerate}

\begin{theorem}\label{lem:qlb0}
Let $A$ be a topological vector space, $X$ a topological space, and $\kappa:X\to\Sub A$ a map. Let also $\pi:E\to X$ and $q:A\times X\to E$ be the bundle and quotient map induced by $\kappa$.
The following conditions are equivalent:
\begin{enumerate}
\item\label{qisopenmap} $q$ is an open map;
\item\label{piisquovb} $(\pi:E\to X,A,q)$ is a quotient vector bundle with kernel map $\kappa$.
\end{enumerate}
\end{theorem}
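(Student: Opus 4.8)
The plan is to prove the two directions separately, with most of the work in showing that openness of $q$ suffices to make $(\pi,A,q)$ a quotient vector bundle with the claimed kernel map. The implication $\eqref{piisquovb}\Rightarrow\eqref{qisopenmap}$ is immediate from the definition of quotient bundle, since a quotient bundle has an open quotient map by fiat; one only needs to remark that the $\kappa$ arising as the kernel map of this particular $(\pi,E,q)$ agrees with the $\kappa$ we started from, which is forced by the construction (the fiber $E_x$ is $A/\kappa(x)$ and $q_x$ is the canonical projection, so $\ker q_x=\kappa(x)$).

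For $\eqref{qisopenmap}\Rightarrow\eqref{piisquovb}$ I would proceed in the following order. First, continuity of $\pi$ and surjectivity of both $\pi$ and $q$ are built into the construction, and $q$ is a map of bundles over $X$ because $\pi\circ q=\pi_2$ by definition of $\pi$; so once $q$ is assumed open, $(\pi,A,q)$ is a quotient bundle in the sense of the paper. It remains to equip $\pi$ with a linear structure making it a linear bundle and making $q$ fiberwise linear. The fiberwise vector space structure is the obvious one transported from $A/\kappa(x)\cong E_x$ via $q_x$; fiberwise linearity of $q$ is then automatic, and well-definedness of addition and scalar multiplication as fiberwise operations uses precisely that $\kappa(x)$ is a linear subspace. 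The substance is global continuity of scalar multiplication $\CC\times E\to E$, of addition $E\times_X E\to E$, and continuity of the zero section.

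The key device for all three continuity statements is Lemma~\ref{qlbreflection}(1): the sets $E_{\Gamma,U}=q(\Gamma\times U)$, for $U\subset X$ open and $\Gamma\subset A$ open, form a basis for the topology of $E$ — this is exactly where openness of $q$ is used, and it is what lets us pull continuity questions on $E$ back to continuity questions on $A\times X$, where everything is the product of known continuous maps. Concretely: for scalar multiplication, the composite $\CC\times A\times X\to E$ sending $(z,a,x)\mapsto z\cdot[a,x]=[za,x]$ factors as $q$ precomposed with $(z,a,x)\mapsto(za,x)$, which is continuous since scalar multiplication on $A$ is; and $\CC\times A\times X\to\CC\times E$ (identity times $q$) is an open surjection because $q$ is, so the induced map $\CC\times E\to E$ is continuous by the universal property of quotient maps (openness gives that $\ident_\CC\times q$ is a quotient map). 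For addition one argues analogously with $A\times A\times X\to E$, $(a,b,x)\mapsto[a+b,x]$, after checking that $q\times_X q:(A\times X)\times_X(A\times X)\to E\times_X E$ is an open surjection; this last point — that the fiber product of open surjections over $X$ is again an open surjection, so that it is a quotient map and the induced addition on $E\times_X E$ is continuous — is the step I expect to need the most care, since fiber products do not preserve quotient maps in general and one must use openness rather than mere surjectivity (alternatively, one can verify continuity of addition directly on the basis $E_{\Gamma,U}$, showing the preimage of $E_{\Gamma,U}$ is a union of sets of the form $(E_{\Gamma_1,U}\times_X E_{\Gamma_2,U})$ with $\Gamma_1+\Gamma_2\subset\Gamma$, using openness of addition on $A$). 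Finally the zero section $x\mapsto 0_x=[0,x]$ equals $q\circ(0,\ident_X)$ and is continuous since $q$ is and $x\mapsto(0,x)$ is. Having supplied the linear structure and these continuities, $\pi$ is a linear bundle, $q$ is fiberwise linear, open and surjective, so $(\pi,A,q)$ is a quotient vector bundle; and its kernel map is $x\mapsto\ker q_x=\kappa(x)$ by construction, completing the proof.
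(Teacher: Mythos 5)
Your proposal is correct and follows essentially the same route as the paper: both directions are handled the same way, and the substantive step — that $\ident_\CC\times q$ and the restriction of $q\times q$ to $(A\times X)\times_X(A\times X)$ are open surjections, hence quotient maps through which addition and scalar multiplication descend — is exactly the paper's argument, with your flagged concern about fiber products of quotient maps resolved just as the paper resolves it (openness is preserved under restriction to saturated preimages). Your treatment of the zero section as $q\circ(0,\ident_X)$ is in fact slightly simpler than the paper's local-basis argument, but this is a cosmetic difference.
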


\begin{proof}
The implication $\eqref{piisquovb}\Rightarrow \eqref{qisopenmap}$ is true by definition of quotient vector bundle.
For the implication $\eqref{qisopenmap}\Rightarrow \eqref{piisquovb}$ we begin by noticing that
the continuity of $\pi$ is due to the universal property of $q$ as a quotient map, and all that we have left to do is prove that $\pi$ is a linear bundle. The fact that $q$ is open also implies that each fiber $E_x$ is, as a subspace of $E$, homeomorphic to the quotient $A/\kappa(x)$ (\cf\ Lemma~\ref{lem:subspaceeqquotient}). Hence, the fibers of $\pi$ are topological vector spaces, and we are left with proving that the linear operations are globally continuous on $E$. For addition we begin by noting that
\[
q\times q:A\times X\times A\times X\to E\times E
\]
is an open map, and thus, since $(A\times X)\times_X(A\times X)=(q\times q)^{-1}(E\times_X E)$, so is the restriction
\[
\widetilde q=q\times q\vert_{(A\times X)\times_X (A\times X)}:(A\times X)\times_X (A\times X)\to E\times_X E\;,
\]
which therefore is a quotient map.
The addition on $E$ is defined fiberwise by
\[
[a,x]+[b,x]=[a+b,x]\;,
\]
and thus it is the continuous map obtained from the addition on $A\times X$ via the universal property of $\widetilde q$ as a quotient map:
\[
\xymatrix{
(A\times X)\times_X(A\times X)\cong A\times A\times X\ar[rrr]^-{+}\ar[d]_{\widetilde q}&&&A\times X\ar[d]^q\\
E\times_X E\ar@{.>}[rrr]^-{+}&&& E
}
\]
Analogously, $\ident\times q:\CC\times A\times X\to\CC\times E$ is an open map, and thus scalar multiplication on $E$ is well defined as a continuous map on $\CC\times E$:
\[
\xymatrix{
\CC\times A\times X\ar[rrr]^-{m\times\ident}\ar[d]_{\ident\times q}&&& A\times X\ar[d]^q\\
\CC\times E\ar@{.>}[rrr]&&&E
}
\]
To conclude, the zero section is obviously continuous because for each $x\in X$ the sets $E_{\Gamma,U}$, with $\Gamma$ an open neighborhood of $0$ in $A$, form a local basis around $0_x$, and the preimage of each such set is the open set $U$. \qedhere
\end{proof}

In addition, we note the following:

\begin{corollary}\label{MaxvsT1}
If the equivalent conditions of Theorem~\ref{lem:qlb0} hold, the following assertions are equivalent:
\begin{enumerate}
\item $\kappa$ is valued in $\Max A$;
\item The fibers $E_x$ are Hausdorff spaces.
\end{enumerate}
\end{corollary}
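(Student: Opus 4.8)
The plan is to reduce the statement to the standard fact about quotients of topological vector spaces by linear subspaces. Assuming the equivalent conditions of Theorem~\ref{lem:qlb0}, part~(2) of Lemma~\ref{lem:subspaceeqquotient} tells us that for each $x\in X$ the fiber $E_x$, as a subspace of $E$, is homeomorphic to the quotient $A/\kappa(x)$; since $\kappa(x)$ is a linear subspace of the topological vector space $A$, this quotient carries its natural topological vector space structure, for which the canonical projection $p_x:A\to A/\kappa(x)$ is a continuous open surjection. Thus it suffices to show, for a fixed $x$, that $A/\kappa(x)$ is Hausdorff if and only if $\kappa(x)$ is closed in $A$; the equivalence of the two global assertions then follows by quantifying over $x$.

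For one direction, suppose $\kappa(x)$ is closed. Then $A\setminus\kappa(x)$ is open, and since $p_x$ is an open map, $p_x(A\setminus\kappa(x))$ is open in $A/\kappa(x)$. But this image is precisely the complement of the zero coset, since $p_x(a)=0$ if and only if $a\in\kappa(x)$. Hence $\{0\}$ is closed in $A/\kappa(x)$, so $A/\kappa(x)$ is $T_1$, and therefore Hausdorff by the general fact, recalled in Section~\ref{sec:defs}, that a $T_1$ topological vector space is Hausdorff. Conversely, if $A/\kappa(x)$ is Hausdorff then in particular $\{0\}$ is closed there, and by continuity of $p_x$ its preimage $\kappa(x)=p_x^{-1}(\{0\})$ is closed in $A$.

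Combining these observations, $\kappa$ is valued in $\Max A$ exactly when $\kappa(x)$ is closed for every $x\in X$, which by the above holds exactly when every $A/\kappa(x)\cong E_x$ is Hausdorff, that is, exactly when all fibers of $\pi$ are Hausdorff. I do not expect a genuine obstacle here: the only points needing care are the passage from $T_1$ to Hausdorff, which is already available as a cited property of topological vector spaces, and the verification that the image of $A\setminus\kappa(x)$ under the open map $p_x$ is genuinely the complement of the zero coset.
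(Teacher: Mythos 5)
Your proof is correct and follows essentially the same route as the paper: both arguments reduce to the equivalence ``$\kappa(x)$ closed $\iff$ $\{0_x\}$ closed in $E_x\cong A/\kappa(x)$'' (using that $q_x$ is a continuous open surjection) and then invoke the fact, recalled in Section~\ref{sec:defs}, that a $T_1$ topological vector space is Hausdorff. The paper states these steps more tersely, but the content is identical.
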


\begin{proof}
The map $\kappa$ is valued in $\Max A$ if and only if the kernels $\ker q_x$ are closed, which is equivalent to the singletons $\{0_x\}$ being closed in the fibers $E_x$, and in turn is equivalent to all the singletons $\{e\}\subset E_x$ being closed. This means $E_x$ is a $T_1$ topological vector space, hence Hausdorff. \qedhere
\end{proof}

\paragraph{Hausdorff bundles.} Given a quotient vector bundle $(\pi:E\to X,A,q)$, a natural question regards the kind of topology which is carried by $E$. In particular, in general $E$ should not be expected to be a Hausdorff space even if $A$ and $X$ are, but the property of being Hausdorff is closely related to the zero section of the bundle:

\begin{theorem}\label{hausdorffbundles}
Let $(\pi:E\to X,A,q)$ be a quotient vector bundle.
If $E$ is Hausdorff the image of the zero section is a closed set in $E$. In addition, if both $A$ and $X$ are Hausdorff spaces then $E$ is Hausdorff if and only if the image of the zero section is closed.
\end{theorem}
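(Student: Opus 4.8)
The plan is to treat both assertions through one elementary observation: the image $Z$ of the zero section is the fixed-point set of the continuous self-map $m_0\colon E\to E$ given by $m_0(e)=0\cdot e=0_{\pi(e)}$, which is continuous because it factors as $e\mapsto(0,e)$ followed by scalar multiplication $\CC\times E\to E$. Consequently $Z=\{e\in E\mid m_0(e)=e\}$ is the preimage of the diagonal $\Delta_E\subset E\times E$ under the continuous map $(\ident_E,m_0)\colon E\to E\times E$. For the first assertion it then suffices to note that if $E$ is Hausdorff the diagonal $\Delta_E$ is closed, hence so is $Z$; no assumption on $A$ or $X$ enters here.

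For the second assertion the forward implication is precisely the first assertion, so the content is in the converse. Assume $Z$ closed, $A$ and $X$ Hausdorff, and take distinct $e,e'\in E$ to be separated by disjoint open sets. If $\pi(e)\neq\pi(e')$ I would use that $X$ is Hausdorff: pick disjoint open $U\ni\pi(e)$, $U'\ni\pi(e')$ and take $\pi^{-1}(U)$, $\pi^{-1}(U')$. The remaining case $\pi(e)=\pi(e')=:x$ is the real point. Here $e-e'$ is a well-defined element of $E_x$ via the fiberwise subtraction, which is continuous as a map $E\times_X E\to E$ (assembled from the addition and the scalar-by-$(-1)$ maps of the linear structure); and $e-e'\neq 0_x$ since $E_x$ is a vector space, so $e-e'\notin Z$.

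Since $Z$ is closed there is an open $W\subset E$ with $e-e'\in W$ and $W\cap Z=\emptyset$. Continuity of subtraction makes $\sigma^{-1}(W)$ an open neighbourhood of $(e,e')$ in $E\times_X E$, and because $E\times_X E$ carries the subspace topology from $E\times E$ this contains a set $(O\times O')\cap(E\times_X E)$ with $e\in O$, $e'\in O'$ open in $E$. The sets $O,O'$ are disjoint: if $u\in O\cap O'$ then $(u,u)\in(O\times O')\cap(E\times_X E)$, whence $0_{\pi(u)}=u-u\in W\cap Z=\emptyset$, a contradiction. I expect the same-fiber case to be the only step requiring any idea — namely, separating $e$ and $e'$ indirectly through $e-e'\notin Z$ rather than directly — while everything else is routine; it is also worth remarking that the argument in fact never uses that $A$ is Hausdorff, only that $X$ is.
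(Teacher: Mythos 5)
Your proof is correct, and it takes a genuinely different route from the paper's. The paper argues upstairs in $A\times X$: it identifies closedness of the image of the zero section with closedness of $q^{-1}(0)=\{(a,x)\st a\in\kappa(x)\}$, and then uses the openness of $q$ (to pull disjoint neighborhoods in $E$ back to boxes $W\times U$) together with the additive structure of $A$ (choosing $W'$ with $a+W'-W'\subset W$) to produce the required separating open sets. You instead work entirely downstairs in $E$, using only the linear-bundle axioms: the first assertion falls out of the fixed-point description $Z=(\ident_E,m_0)^{-1}(\Delta_E)$ with $m_0(e)=0\cdot e$, and the converse in the same-fiber case is the fiberwise version of the classical ``a topological group is Hausdorff iff the identity is closed'' argument, run through the continuous subtraction $E\times_X E\to E$ and the subspace topology of $E\times_X E$ in $E\times E$. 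This buys two genuine improvements: the statement holds verbatim for an arbitrary linear bundle, since no quotient structure (in particular no openness of $q$) is used anywhere; and, as you observe, the Hausdorff hypothesis on $A$ is superfluous --- only Hausdorffness of $X$ enters, and only in the easy case $\pi(e)\neq\pi(e')$. What the paper's formulation buys in exchange is that the condition is cast directly as closedness of $q^{-1}(0)$ in $A\times X$, which is exactly the form reused later in the Fell-continuity arguments (e.g.\ Theorem~\ref{fellcont}), so the two proofs are optimized for different purposes rather than one being a strict refinement of the other in every respect.
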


Before we begin the proof, notice that the image of the zero section of $E$ is closed if and only if $q^{-1}(0)$
is closed in $A\times X$. Furthermore, $(a,x)\in q^{-1}(0)$ is equivalent to $a\in \kappa(x)$.

\begin{proof}
Assume first that $E$ is Hausdorff. We will show that $q^{-1}(0)$ is closed, which is equivalent to the image of the zero section being closed.
Let $(a,x)\notin q^{-1}(0)$. Then $q(a,x)\neq q(0,x)$, whence there are 
disjoint open sets $\mathcal U_a,\mathcal U_0\subset E$ such that
$q(a,x)\in \mathcal U_a$ and $q(0,x)\in \mathcal U_0$.
It follows that there are open sets
$W_0,W_a\subset A$ and $U\subset X$ such that $a\in W_a$, $0\in W_0$, $x\in U$,
$q(W_0\times U)\subset \mathcal U_0$ and 
$q(W_a\times U)\subset \mathcal U_a$.
Therefore we have
\[
q\bigl((W_a\times U)\cap q^{-1}(0)\bigr)\subset q(W_a\times U)\cap q(W_0\times U)\subset \mathcal U_a\cap\mathcal U_0=\emptyset\;.
\]
Hence, $(W_a\times U)\cap q^{-1}(0)=\emptyset$ and thus $q^{-1}(0)$ is closed.

Now assume that $q^{-1}(0)$ is closed and that both $A$ and $X$ are Hausdorff. We will show that $E$ is Hausdorff.
Let $(a,x),(b,y)\in A\times X$ be such that $q(a,x)\neq q(b,y)$.
We may assume that $b=0$ and,
since $X$ is Hausdorff, that $x=y$. 
Then $(a,x)\notin q^{-1}(0)$ and, since $q^{-1}(0)$ is closed, 
there is a neighborhood $W$ of $a$
and a neighborhood $U$ of $x$ such that $(W\times U)\cap q^{-1}(0)=\emptyset$.
Then, for all $y\in U$ we have $W\cap \kappa(y)=\emptyset$.
Let $W'$ be a neighborhood of the origin such that $a+W'-W'\subset W$.
For all $b\in W'$ and $c\in a+W'$ we have $c-b\in a+W'-W'\subset W$, and thus $c-b\notin \kappa(y)$, which means that $q(b,y)\neq q(c,y)$. Hence, $q(W'\times U)$ and $q\bigl((a+W')\times U\bigr)$
are disjoint open sets, and it follows that $E$ is Hausdorff.  \qedhere
\end{proof}

\begin{corollary}
Let $(\pi:E\to X,A,q)$ be a quotient vector bundle with Hausdorff fibers such that $\pi$ is locally trivial and both $X$ and $A$ are Hausdorff spaces. Then $E$ is a Hausdorff space.
\end{corollary}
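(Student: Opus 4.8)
The plan is to combine two results already established in the paper: Lemma~\ref{lemma:loctriv} and the second part of Theorem~\ref{hausdorffbundles}. The strategy is essentially to verify that the hypotheses of the corollary supply exactly what those two statements require, and then to chain them.

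First I would invoke Lemma~\ref{lemma:loctriv}. Since $\pi:E\to X$ is by assumption a locally trivial vector bundle with Hausdorff fibers, that lemma applies verbatim and tells us that the image of the zero section of $\pi$ is a closed subset of $E$. (Concretely, around any nonzero $v\in E$ one trivializes over a neighborhood $U$ of $\pi(v)$ and uses that in the trivial model the complement of the zero section is open, because the typical fiber is Hausdorff.)

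Next I would apply Theorem~\ref{hausdorffbundles}. The corollary hypothesizes that $(\pi,A,q)$ is a quotient vector bundle and that both $A$ and $X$ are Hausdorff spaces, which are precisely the standing assumptions for the second assertion of that theorem. That assertion states that, under these conditions, $E$ is Hausdorff if and only if the image of the zero section is closed. By the previous paragraph the image of the zero section is indeed closed, so $E$ is Hausdorff, which is the claim.

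There is no real obstacle here: the work has already been done in Lemma~\ref{lemma:loctriv} and Theorem~\ref{hausdorffbundles}, and the only thing to be careful about is matching the hypotheses, in particular noticing that ``Hausdorff fibers'' is exactly what Lemma~\ref{lemma:loctriv} needs and that local triviality is used solely through that lemma (it plays no further role once we know the zero section has closed image). So the proof is a two-line deduction.
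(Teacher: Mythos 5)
Your proof is correct and is exactly the paper's argument: the authors also derive the corollary as an immediate consequence of Lemma~\ref{lemma:loctriv} (closedness of the zero section's image from local triviality and Hausdorff fibers) combined with the second assertion of Theorem~\ref{hausdorffbundles}. Nothing further is needed.
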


\begin{proof}
Immediate consequence of Lemma~\ref{lemma:loctriv} and Theorem~\ref{hausdorffbundles}. \qedhere
\end{proof}

\paragraph{Normed bundles.}

We shall say that a quotient vector bundle \[(\pi:E\to X,A,q)\] is \emph{normed} if $A$ is a normed linear space. Then we define a mapping
\[
{\norm~} : E\to\RR\;,
\]
called the \emph{quotient norm} on $E$, by, for each $x\in X$ and $a\in A$,
\[
\norm{q(a,x)} = d(a,\kappa(x)) = \inf_{p\in \kappa(x)}\norm{ a+p}\;,
\]
where $\kappa$ is the kernel map of the bundle.

\begin{theorem}\label{thm:normedqvb}
Let $(\pi:E\to X,A,q)$ be a normed quotient vector bundle. The quotient norm on $E$ is upper semicontinuous.
\end{theorem}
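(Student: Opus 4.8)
The plan is to prove directly that for every $c\in\RR$ the set $\{e\in E\st\norm e<c\}$ is open in $E$; this is exactly what upper semicontinuity of the quotient norm means. The crucial point is the claim that
\[
\{e\in E\st\norm e<c\}=q(B_c\times X)\;,
\]
where $B_c=\{a\in A\st\norm a<c\}$ denotes the open ball of radius $c$ about the origin in $A$ (so $B_c=\emptyset$ when $c\le 0$). Once this identity is established the theorem follows immediately, since $q$ is an open map by hypothesis and hence $q(B_c\times X)$ is open.

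For the inclusion $\supseteq$, suppose $e=q(a,x)$ with $\norm a<c$; taking $p=0\in\kappa(x)$ in the infimum defining the quotient norm gives $\norm e=d(a,\kappa(x))\le\norm a<c$. For the reverse inclusion, let $e\in E_x$ with $\norm e<c$ and use surjectivity of $q$ to write $e=q(a,x)$ for some $a\in A$. Since $d(a,\kappa(x))=\norm e<c$, there is $p\in\kappa(x)$ with $\norm{a+p}<c$, so that $a+p\in B_c$; and since $p\in\kappa(x)$ we have $(a+p,x)\sim(a,x)$, hence $q(a+p,x)=q(a,x)=e$, which shows $e\in q(B_c\times X)$.

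The step I regard as the crux is precisely this set-theoretic identity: once one recognizes that the sublevel set of the quotient norm is exactly the $q$-image of an open ball in $A\times X$, everything else is routine and there is no genuine analytic obstacle. The one technical point underlying the argument is that the quotient norm is well defined, i.e.\ constant on the fibers of $q$; this holds because each $\kappa(x)$ is a linear subspace, so replacing $a$ by $a+p$ with $p\in\kappa(x)$ leaves $\inf_{p'\in\kappa(x)}\norm{a+p'}$ unchanged. (Equivalently, since $q$ is a continuous open surjection, hence a quotient map, one could instead verify that $q^{-1}\bigl(\{e\st\norm e<c\}\bigr)=\{(a,x)\st d(a,\kappa(x))<c\}$ is open in $A\times X$.) With the displayed identity in hand, upper semicontinuity of the quotient norm follows at once.
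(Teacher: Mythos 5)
Your proof is correct and is essentially the paper's own argument: the paper shows $\{e\in E\st\norm e<\varepsilon\}=E_{\Gamma,X}$ with $\Gamma=B_\varepsilon(0)$, which is exactly your identity $\{e\st\norm e<c\}=q(B_c\times X)$ since $E_{\Gamma,X}=q(\Gamma\times X)$, and both directions of the inclusion are argued the same way. Nothing further is needed.
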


\begin{proof}
Let $\varepsilon>0$, and let $\Gamma=B_{\varepsilon}(0)\subset A$. The basic open set $E_{\Gamma,X}$ consists of all the elements $q(a,x)$ with $\norm a<\varepsilon$, so we have
\[
\norm{ q(a,x)}\le\norm a<\varepsilon\;.
\]
Hence, $E_{\Gamma,X}\subset\{e\in E\st \norm e<\varepsilon\}$. Now let $e=q(a,x)\in E$ be such that $\norm e<\varepsilon$. The condition $\norm{d(a,\kappa(x))}<\varepsilon$ implies that $\norm{a+p}<\varepsilon$ for some $p\in\kappa(x)$. Then $a+p\in\Gamma$ and $q(a+p,x)=e$, showing that $E_{\Gamma,X}=\{e\in E\st \norm e<\varepsilon\}$. Hence, $\{e\in E\st \norm e<\varepsilon\}$ is open for arbitrary $\varepsilon>0$, which means precisely that ${\norm~}:E\to\RR$ is upper-semicontinuous. \qedhere
\end{proof}

We shall say that the normed quotient vector bundle $(\pi,A,q)$ is \emph{continuous} if moreover the norm of $E$ is also lower-semicontinuous.

\section{Banach bundles}\label{sec:banach}

In this section we shall see that classical Banach bundles can always be regarded as continuous quotient vector bundles, at least if the base space is locally compact Hausdorff.

\paragraph{Basic definitions and facts.}

Let $X$ be a Hausdorff space. By a \emph{Banach bundle} over $X$ \cite{FD1}*{II.13.4} is meant a Hausdorff space $E$ equipped with a continuous open surjection
\[\pi:E\to X\]
such that:
\begin{enumerate}
\item for each $x\in X$ the fiber $E_x$ has the structure of a Banach space;
\item\label{banachtopaddition} addition is continuous on $E\times_X E$ to $E$;
\item for each $\lambda\in\CC$, scalar multiplication $e\mapsto\lambda e$ is continuous on $E$ to $E$;
\item\label{banachtopnorm} $e\mapsto \norm e$ is continuous on $E$ to $\RR$;
\item\label{banachtopbasis} for each $x\in X$ and each open set $V\subset E$ containing $0_x$, there is $\varepsilon>0$ and an open set $U\subset X$ containing $x$ such that $E_U\cap T_\varepsilon\subset V$, where $T_\varepsilon$ is the ``tube'' $\{e\in E\st \norm e<\varepsilon\}$.
\end{enumerate}

Condition \eqref{banachtopbasis} is equivalent to stating that for each $x\in X$ the open ``rectangles''
\[E_U\cap T_\varepsilon\]
with $x\in U$ form a local basis of $0_x$. It is also equivalent to the statement that for every net $(e_\alpha)$ in $E$, if $\pi(e_\alpha)\to x$ and $\norm{e_\alpha}\to 0$ then $e_\alpha\to 0_x$ (the axiom of choice is needed for the converse implication).

Hence, in particular, the zero section of a Banach bundle is continuous. It can also be shown that scalar multiplication as an operation $\CC\times E\to E$ is continuous, and thus Banach bundles are linear bundles.

We further recall that if $X$ is locally compact then any Banach bundle over $X$ has enough sections \cite{FD1}*{Appendix C}.

\paragraph{Banach bundles with enough sections.}

Let $\pi:E\to X$ be a Banach bundle with enough sections. For each $s\in\sections(\pi)$ and each $\varepsilon>0$ define the set
\[
T_{\varepsilon}(s)=\{e\in E\st \norm{e-s(\pi(e))}<\varepsilon\}\;.
\]

\begin{lemma}
Let $\pi:E\to X$ be a Banach bundle with enough sections. The collection of all the sets of the form
\[
E_U\cap T_\varepsilon(s)\;,
\]
where $s\in\sections(\pi)$ and $U\subset X$ is open, is a basis for the topology of $E$.
\end{lemma}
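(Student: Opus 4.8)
The plan is to show that every set in the proposed collection is open, and conversely that every open set is a union of such sets. The first half is essentially immediate from the Banach bundle axioms, so the real content is showing that the sets $E_U \cap T_\varepsilon(s)$ are plentiful enough to generate the topology; the key tool is axiom (5) (the ``tube'' local-basis condition) together with the continuity of sections and of the algebraic operations.

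\textbf{Step 1: each $E_U\cap T_\varepsilon(s)$ is open.} The set $E_U = \pi^{-1}(U)$ is open by continuity of $\pi$. For $T_\varepsilon(s)$, I would observe that the map $e\mapsto e - s(\pi(e))$ is continuous on $E_{\dom(s)}$ — it is the composite of $e\mapsto (e, s(\pi(e)))$ into $E\times_X E$ (using continuity of $s$ and $\pi$) followed by the difference operation, which is continuous by axioms (2)--(3) together with the fact, recalled in the excerpt, that Banach bundles are linear bundles so subtraction $E\times_X E\to E$ is continuous. Composing further with the continuous norm $\norm{\,\cdot\,}:E\to\RR$ of axiom (4), we see $e\mapsto \norm{e-s(\pi(e))}$ is continuous, so $T_\varepsilon(s)$ is the preimage of $(-\infty,\varepsilon)$, hence open; intersecting with $E_U$ keeps it open.

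\textbf{Step 2: these sets form a basis.} Let $W\subset E$ be open and $e_0\in W$; put $x_0=\pi(e_0)$. Since the bundle has enough sections, choose $s\in\sections(\pi)$ with $s(x_0)=e_0$. Now consider the translated open set $W' = W - s\circ\pi$ in the appropriate sense: more carefully, define $h:E_{\dom(s)}\to E$ by $h(e)=e - s(\pi(e))$, which is a homeomorphism of $E_{\dom s}$ onto itself (its inverse is $e\mapsto e+s(\pi(e))$, continuous for the same reasons), fiberwise linear, and satisfies $h(e_0)=0_{x_0}$. Then $h(W\cap E_{\dom s})$ is an open neighborhood of $0_{x_0}$, so by axiom (5) there exist $\varepsilon>0$ and an open $U\ni x_0$ with $E_U\cap T_\varepsilon \subseteq h(W\cap E_{\dom s})$. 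Applying $h^{-1}$ and noting $h^{-1}(E_U\cap T_\varepsilon) = E_U\cap T_\varepsilon(s)$ (since $h$ preserves fibers and $\norm{h^{-1}(e) - s(\pi(e))} = \norm{e}$), we obtain $e_0\in E_U\cap T_\varepsilon(s)\subseteq W$, as required.

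\textbf{The main obstacle} I anticipate is bookkeeping about domains: if the paper's convention allows only global sections $s\in\sections(\pi)$ defined on all of $X$, then Step 2 is clean as written; but if $s$ is merely local one must be careful that $E_U\cap T_\varepsilon(s)$ only makes sense for $U$ inside the domain of $s$, and shrink $U$ accordingly — this is harmless since we only need a neighborhood basis at each point. A secondary point requiring a line of justification is that $e\mapsto e - s(\pi(e))$ really is continuous and a homeomorphism; this rests on the already-recalled fact that Banach bundles are linear bundles (so both addition and the fiberwise subtraction $E\times_X E\to E$ are continuous), and on continuity of $s$, so no new work is needed beyond assembling these pieces.
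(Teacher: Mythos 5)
Your proof is correct and follows essentially the same route as the paper: both arguments rest on the translation homeomorphism $h_s(e)=e+s(\pi(e))$ (your $h$ is its inverse $h_{-s}$), which carries the local basis $E_U\cap T_\varepsilon$ at $0_{x_0}$ guaranteed by axiom (5) to the sets $E_U\cap T_\varepsilon(s)$ at $e_0=s(x_0)$. Your worry about domains is moot, since the paper's sections are global maps $s:X\to E$.
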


\begin{proof}
$T_\varepsilon(s)$ is the image of $T_\varepsilon$ by the homeomorphism
\[
h_s:E\to E
\]
which is defined by $h_s(e)=e+s(\pi(e))$ (whose inverse $h_s^{-1}$ is $h_{-s}$), and thus it is an open set of $E$. Then, for each $e\in E$ and each continuous section $s$ through $e$, the collection
\[
\{E_U\cap T_{\varepsilon}(s)\st \varepsilon>0,\ U\subset X\textrm{ is open,}\ \pi(e)\in U\}
\]
is a local basis at $e$. \qedhere
\end{proof}

\paragraph{Banach bundles on locally compact spaces.}

Now we shall see that Banach bundles on locally compact Hausdorff spaces yield quotient vector bundles (\cf\ Example~\ref{exm:banachasquotient}).

\begin{theorem}\label{banachasquotient}
Every Banach bundle on a locally compact Hausdorff space can be made a continuous normed sectional quotient vector bundle whose quotient norm coincides with the Banach bundle norm.
\end{theorem}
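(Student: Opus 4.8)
The plan is to realise $E$ as a sectional quotient vector bundle with respect to $A=\sections_0(\pi)$, the space of continuous sections of $\pi$ vanishing at infinity, equipped with the supremum norm $\norm s=\sup_{x\in X}\norm{s(x)}$ (which is a norm because sections vanishing at infinity are bounded). Since $X$ is locally compact Hausdorff, $\pi$ has enough sections, and multiplying any continuous section by a compactly supported function gives a section in $\sections_0(\pi)$; hence $\eval:A\times X\to E$ is surjective and separates the fibres. Because the bundle radical $\bigcap_{x\in X}\kappa(x)$ is then $\{0\}$, the canonical map $A\to\hat A$ is an isomorphism, so once $(\pi,A,\eval)$ is shown to be a quotient vector bundle it is automatically sectional, and it is normed by construction.

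The technical heart is a single localization lemma: \emph{if $s_0\in\sections_0(\pi)$, $x\in X$, $e\in E_x$ and $\norm{e-s_0(x)}<\varepsilon$, then there is $s\in\sections_0(\pi)$ with $s(x)=e$ and $\norm{s-s_0}<\varepsilon$.} To prove it I would choose a continuous section $u$ with $u(x)=e-s_0(x)$ (enough sections), pick $\delta$ with $\norm{u(x)}<\delta<\varepsilon$, observe that $V=\{y\in X\st\norm{u(y)}<\delta\}$ is open (continuity of the Banach bundle norm) and contains $x$, take $\phi\in C_c(X,[0,1])$ with $\phi(x)=1$ and $\supp\phi\subset V$ (Urysohn's lemma for locally compact Hausdorff spaces), and set $s=s_0+\phi u$; then $s\in\sections_0(\pi)$, $s(x)=e$ and $\norm{s-s_0}=\norm{\phi u}\le\delta<\varepsilon$. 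I expect this lemma — the interplay between the norm-continuity axiom for Banach bundles and the existence of bump functions on $X$ — to be the main obstacle; everything else is essentially formal once it is available. (In particular it also reproves surjectivity of $\eval$, by applying it with $s_0=0$.)

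Granting the lemma, openness of $\eval$ follows by verifying that
\[
\eval\bigl(B_\varepsilon(s_0)\times U\bigr)=E_U\cap T_\varepsilon(s_0)
\]
for every open $U\subset X$, every $s_0\in A$ and every $\varepsilon>0$: the inclusion ``$\subseteq$'' is immediate from the definition of the supremum norm, while ``$\supseteq$'' is exactly the lemma applied at $x=\pi(e)$. Since the sets $B_\varepsilon(s_0)\times U$ form a basis of $A\times X$ and the sets $E_U\cap T_\varepsilon(s_0)$ are basic open sets of the Banach bundle topology on $E$, the map $\eval$ is open; its continuity is checked by a similar, easier estimate, using that $y\mapsto\norm{(s-s_1)(y)}$ is continuous for $s,s_1\in A$. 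Thus $(\pi,A,\eval)$ is a (sectional, normed) quotient vector bundle.

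Finally I would compute the quotient norm. The kernel map is $\kappa(x)=\{t\in A\st t(x)=0\}$, so $\norm{q(s,x)}=\inf\{\norm{s-t}\st t\in A,\ t(x)=0\}$. On the one hand $\norm{s-t}\ge\norm{s(x)-t(x)}=\norm{s(x)}$, giving $\norm{q(s,x)}\ge\norm{s(x)}$; on the other hand, for $\eta>0$ the lemma with $s_0=0$, $e=s(x)$ and $\varepsilon=\norm{s(x)}+\eta$ produces $s'\in A$ with $s'(x)=s(x)$ and $\norm{s'}<\norm{s(x)}+\eta$, so $t=s-s'$ witnesses $\norm{q(s,x)}\le\norm{s(x)}+\eta$. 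Letting $\eta\to0$, the quotient norm on $E$ coincides with the original Banach bundle norm, which is continuous; in particular it is lower semicontinuous, so together with Theorem~\ref{thm:normedqvb} the quotient vector bundle is continuous, which completes the proof.
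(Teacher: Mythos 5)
Your proof is correct and follows essentially the same route as the paper's: your localization lemma is precisely the paper's bump-function argument for openness of $\eval$ (with $u$ playing the role of $t-s$ and the tube $T_\varepsilon(s_0)$ identified with the image of $B_\varepsilon(s_0)\times U$), and the surjectivity, continuity, and lower-semicontinuity steps match. The only cosmetic difference is in computing the quotient norm, where the paper exhibits an explicit section $p$ attaining the infimum $\norm{s+p}_\infty=\norm{s(x)}$ while you approximate it to within $\eta$ using the lemma; both are equally valid.
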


\begin{proof}
Let $\pi:E\to X$ be a Banach bundle with $X$ locally compact Hausdorff, and denote by $\sections_0(\pi)$ the Banach space of continuous sections of $\pi$ which vanish at infinity, equipped with the topology of the supremum norm $\norm~_\infty$. The evaluation mapping
\[
q=\eval:\sections_0(\pi)\times X\to E
\]
is surjective because there are enough sections due to the local compactness of $X$, and thus there are enough sections in $\sections_0(\pi)$ because there are enough compactly supported sections (from any continuous section $s$ through $e$ and any compactly supported continuous function $f:X\to\CC$ such that $f(\pi(e))=1$ we obtain a compactly supported continuous section $fs$ through $e$). The evaluation mapping is also continuous because the supremum norm topology contains the compact-open topology. For each $x\in X$ the mapping
\begin{eqnarray*}
\sections_0(\pi)&\to& E_x\\
s&\mapsto& q(s,x)
\end{eqnarray*}
is linear, and thus $q$ is a map in $\LBun(X)$. Now let us prove that $q$ is an open map. A basis for the topology of $\sections_0(\pi)\times X$ consists of all the sets $B_\varepsilon(s)\times U$ with $s\in \sections_0(\pi)$, $\varepsilon>0$, and $U\subset X$ open. The image $q(B_\varepsilon(s)\times U)$ is of course contained in $T_\varepsilon(s)\cap E_U$. For the converse inclusion let $e\in T_\varepsilon(s)\cap E_U$, and let $t\in \sections_0(\pi)$ be such that $t(\pi(e))=e$. We have $\norm{t(\pi(e))-s(\pi(e))}<\varepsilon$, and thus, by the upper semi-continuity of the norm of $E$, for some open set $V\subset U$ containing $\pi(e)$ we have $\norm{t(x)-s(x)}<\varepsilon$ for all $x\in V$. Let $f:X\to [0,1]$ be a continuous compactly supported function such that both $f(\pi(e))=1$ and $\supp f\subset V$, and let $t'=s+f(t-s)$. Then $t'\in \sections_0(\pi)$ and $t'(\pi(e))=e$, and furthermore we obtain
\[
\norm{t'-s}_\infty=\norm{f(t-s)}_\infty\le\max_{x\in\supp f} \{\norm{t(x)-s(x)}\}< \varepsilon\;,
\]
so we conclude that $e\in q(B_\varepsilon(s)\times U)$. Hence, $q(B_\varepsilon(s)\times U)=T_\varepsilon(s)\cap E_U$, and it follows that $q$ is an open map.
So we have a normed sectional quotient vector bundle $(\pi,\sections_0(\pi),\eval)$. The  quotient norm $\norm~_q$ on $E$ is defined for all $s\in \sections_0(\pi)$ and $x\in X$ by (\cf\ definition preceding Theorem~\ref{thm:normedqvb})
\[
\norm{s(x)}_q=\inf\bigl\{\norm{s+p}_\infty\st p\in \sections_0(\pi)\textrm{ and }p(x)=0_x\bigr\}\;.
\]
Let us show that $\norm~_q$ coincides with the Banach bundle norm $\norm~$. First, for all $x\in X$ and all $s,p\in\sections_0(\pi)$ such that $p(x)=0_x$ we have
\[
\norm{s(x)}=\norm{s(x)+p(x)}\le\norm{s+p}_\infty\;,
\]
and thus $\norm~\le\norm~_q$. Now in order to prove that we have $\norm~=\norm~_q$ it suffices to show that for all $x\in X$ and $s\in \sections_0(\pi)$ there is $p\in \sections_0(\pi)$ such that $p(x)=0_x$ and $\norm{s+p}_\infty=\norm{s(x)}$. There are two cases: if $s(x)=0_x$ just let $p=-s$; otherwise let $p=fs$ where $f:X\to(-1,0]$ is the continuous function defined by, for all $y\in X$,
\[
f(y)=\left\{\begin{array}{ll}
\frac{\norm{s(x)}}{\norm{s(y)}}-1&\textrm{if }\norm{s(x)}\le\norm{s(y)}\;,\\
0&\textrm{if }\norm{s(x)}\ge\norm{s(y)}\;.\end{array}\right.
\]
Finally, the definition of Banach bundle states that the norm on $E$ is also lower-semicontinuous, so it follows that the normed quotient vector bundle $(\pi,\sections_0(\pi),\eval)$ is continuous.
\qedhere
\end{proof}

\section{Classifying spaces}\label{sec:classif}

Now we shall look at conditions on a map $\kappa:X\to\Sub A$ which ensure that it is the kernel map of a quotient vector bundle. As we shall see, this is so when $\kappa$ is continuous with respect to a suitable topology on $\Sub A$.

\paragraph{Lower Vietoris topology.}

The Vietoris topology on the space of closed subsets of a topological space \cite{Vietoris} is often presented as the coarsest topology that contains both the lower and the upper Vietoris topologies --- see, \eg, \cite{NT96}. For the purposes of this section, given a topological vector space $A$ we shall topologize $\Max A$ (and indeed also $\Sub A$) with the subspace topology which is obtained from the lower Vietoris topology.

Let $A$ be a topological vector space. For each open set $U\subset A$ we shall write
\[
\ps U=\{P\in\Sub A\st U\cap P\neq\emptyset\}\;.
\]
The collection of all the sets $\ps U$ is a subbasis for a topology on $\Sub A$, which we shall refer to as the \emph{lower Vietoris topology on $\Sub A$}.

We remark that $\Sub A$ with this topology is usually not a $T_0$ space, since for all $V\in\Sub A$ the neighborhoods of a linear subspace $V$ are the same as the neighborhoods of its closure $\overline V$. However, the subset $\Max A$, equipped with the subspace topology, is always a $T_0$ space.

\begin{theorem}\label{basictoppropsMax}
Let $A$ be a topological vector space.
\begin{enumerate}
\item\label{spec1}
$\Max A$ is a topological retract of $\Sub A$.
\item\label{spec2} The map $\linspan{~}:A\to\Max A$ which to each $a\in A$ assigns its span $\linspan a=\CC a$ is continuous.
\end{enumerate}
\end{theorem}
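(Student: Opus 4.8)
The plan is to handle the two parts separately, both by direct verification of continuity against the subbasis $\{\ps U\}$ of the lower Vietoris topology.

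For part~\eqref{spec1}, the natural retraction $r:\Sub A\to\Max A$ is the closure map $V\mapsto\overline V$. It is well-defined (the closure of a linear subspace is a closed linear subspace), it restricts to the identity on $\Max A$, and the inclusion $\Max A\hookrightarrow\Sub A$ is continuous by definition of the subspace topology; so the only thing to check is that $r$ is continuous. For this I would verify that $r^{-1}(\ps U\cap\Max A)=\ps U$ for every open $U\subset A$: indeed $\overline V$ meets the open set $U$ if and only if $V$ itself meets $U$ (one direction is trivial; the other is the basic fact that an open set meets a closure iff it meets the set). Since the sets $\ps U\cap\Max A$ form a subbasis for the topology of $\Max A$, this establishes continuity of $r$, and hence exhibits $\Max A$ as a topological retract of $\Sub A$ — which, as a byproduct, re-proves the remark that $\Max A$ is $T_0$ is compatible with $\Sub A$ not being $T_0$.

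For part~\eqref{spec2}, continuity of $\linspan{~}:A\to\Max A$, $a\mapsto\CC a$, it suffices to check that the preimage of each subbasic open set $\ps U\cap\Max A$ is open in $A$. Now $\linspan a\in\ps U$ means $\CC a\cap U\neq\emptyset$, i.e.\ $\lambda a\in U$ for some scalar $\lambda\in\CC$; equivalently $a\in\bigcup_{\lambda\in\CC}\lambda^{-1}U$, where for $\lambda\neq 0$ the set $\lambda^{-1}U=\{a\st\lambda a\in U\}$ is open by continuity of scalar multiplication (it is the image of $U$ under the homeomorphism $a\mapsto\lambda^{-1}a$), and the $\lambda=0$ term contributes the whole space $A$ exactly when $0\in U$ and is empty otherwise. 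In either case the preimage $\linspan{~}^{-1}(\ps U)$ is a union of open sets, hence open.

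I do not expect a serious obstacle here; the only point requiring a little care is the bookkeeping with the scalar $\lambda=0$ in part~\eqref{spec2} (the zero subspace $\{0\}$ meets $U$ precisely when $0\in U$), and making sure in part~\eqref{spec1} that we are checking continuity against the subbasis of $\Max A$ rather than of $\Sub A$ — but these are routine. The substantive content is simply the elementary topological lemma that an open set meets a set iff it meets its closure, which drives both halves.
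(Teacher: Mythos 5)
Your proposal is correct and follows essentially the same route as the paper: the retraction is the closure map $V\mapsto\overline V$, whose continuity comes from the fact that an open set meets $\overline V$ iff it meets $V$, and continuity of $a\mapsto\CC a$ is checked against the subbasic sets $\ps U$ using continuity of the scalar multiplications $c\mapsto\lambda c$. Your version merely spells out the details (the closure fact, and the $\lambda=0$ bookkeeping) that the paper leaves implicit.
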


\begin{proof}
The mapping $V\mapsto\overline V$ from $\Sub A$ to $\Max A$ is continuous, so \eqref{spec1} holds. In order to prove \eqref{spec2},
let $a\in A$ and let $W\subset\Max A$ be an open neighborhood of $\linspan a$. It suffices to take
$W=\ps U\cap\Max A$ for some open set $U\subset A$. Then there is $\lambda\in \CC$ such that $\lambda a\in U$. Due to the continuity of scalar multiplication in $A$ there is $V\subset A$ open such that $a\in V$ and $\lambda c\in U$ for all $c\in V$. Hence,
$\linspan c\in W$ for all $c\in V$, showing that \eqref{spec2} holds. \qedhere
\end{proof}

\paragraph{Classification of quotient vector bundles.}

Let again $A$ be a topological vector space, $X$ a topological space, and \[\kappa:X\to \Sub A\] an arbitrary map. Let $q:A\times X\to E$ be the quotient map induced by $\kappa$ (\cf\ definition above Theorem~\ref{lem:qlb0}), and, for each $Y\subset A\times X$,
write $[Y]$ for $q^{-1}\bigl(q(Y)\bigr)$ (the saturation of $Y$).

\begin{lemma}\label{lem:neighborigin}
The induced quotient map is open if and only if for every neighborhood $U\subset A$
of the origin the set $[U\times X]=\bigl\{(a,x)\in A\times X\st a\in \kappa(x)+U\bigr\}$
is an open subset of $A\times X$.
\end{lemma}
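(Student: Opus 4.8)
The plan is to show the two implications separately, using the description of the quotient topology on $E$ together with the explicit formula for the saturation $[U\times X]$.

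First I would record the key computation: for a neighborhood $U$ of the origin in $A$, we have $q^{-1}(q(U\times X)) = \{(a,x) \st a \in \kappa(x) + U\}$. Indeed, $(a,x) \sim (b,x)$ with $b \in U$ means $a - b \in \kappa(x)$, i.e. $a \in \kappa(x) + b \subset \kappa(x) + U$; conversely if $a = p + u$ with $p \in \kappa(x)$ and $u \in U$ then $(a,x) \sim (u,x) \in U\times X$. So $[U\times X]$ is precisely the stated set.

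For the forward implication, suppose $q$ is open. Then for any neighborhood $U$ of the origin, $q(U\times X)$ is open in $E$, hence $q^{-1}(q(U\times X)) = [U\times X]$ is open in $A\times X$ by continuity of $q$. This is immediate and needs no further work.

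For the converse, suppose $[U\times X]$ is open for every neighborhood $U$ of the origin; I must show $q(W)$ is open in $E$ for arbitrary open $W \subset A \times X$, equivalently (since $E$ has the quotient topology and $q$ is surjective) that $[W] = q^{-1}(q(W))$ is open in $A\times X$. Fix $(a,x) \in [W]$; then there is $(b,x) \in W$ with $a - b \in \kappa(x)$. Choose a basic neighborhood $V \times O$ of $(b,x)$ contained in $W$, and using continuity of subtraction in $A$ pick a neighborhood $U$ of the origin with $b + U \subset V$; I'd also shrink so that $a + U$ behaves well. The point is that $(a,x) + (\{0\}\times O)$ and a translate of $[U \times X]$ should land inside $[W]$: concretely, for $(a',x') $ near $(a,x)$ with $x' \in O$ and $a' - a \in U$ (roughly), one wants $a' \in \kappa(x') + (b + U) \subset \kappa(x') + V$, which together with $x' \in O$ gives $(a',x') \sim$ a point of $V\times O \subset W$. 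To make this rigorous I would translate: the set $[W]$ contains the set $\{(a',x') \st x' \in O,\ a' - (a-b) \in [U\times X]_{x'}\text{-translate}\}$; more cleanly, observe $[W] \supset (a - b, 0) + \bigl([U\times X] \cap (A\times O)\bigr)$ once $b + U \subset V$, because if $(c,x') \in [U\times X]$ with $x' \in O$ then $c \in \kappa(x') + U$, so $(a-b) + c \in \kappa(x') + (b + U) \subset \kappa(x') + V$, and picking $v \in V$ with $(a-b)+c - v \in \kappa(x')$ we get $(a-b+c, x') \sim (v,x') \in V\times O \subset W$. Since $[U\times X]$ is open, $[U\times X] \cap (A\times O)$ is open, and translation by $(a-b,0)$ is a homeomorphism, so $(a,x) = (a-b,0) + (b,x)$ lies in the interior of $[W]$. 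As $(a,x) \in [W]$ was arbitrary, $[W]$ is open, hence $q$ is open.

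The main obstacle is the bookkeeping in the converse: one must carefully choose the neighborhoods so that the translate of $[U\times X]$ (which "sees" only the origin-neighborhood $U$ in the $A$-direction) actually lands inside the saturation of the given basic open box $V \times O$, and this requires invoking continuity of the vector space operations (translation is a homeomorphism, and $b + U \subset V$ for $U$ small) rather than anything about $\kappa$ itself. Everything else is a direct unwinding of definitions.
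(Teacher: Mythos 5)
Your overall strategy is sound and is essentially the one the paper uses: the forward direction and the computation of $[U\times X]$ are fine, and for the converse the right idea is indeed to produce a neighborhood of a given point of $[W]$ by translating the open set $[U\times X]$. However, the key step as written is wrong: you translate by $(a-b,0)$, and with that choice neither of your two claims holds. First, the membership $(a,x)\in (a-b,0)+\bigl([U\times X]\cap(A\times O)\bigr)$ would require $(b,x)\in[U\times X]$, i.e.\ $b\in\kappa(x)+U$, for which there is no reason (you only know $a-b\in\kappa(x)$). Second, the verification of the containment is incorrect: from $c\in\kappa(x')+U$ you can only conclude $(a-b)+c\in\kappa(x')+\bigl((a-b)+U\bigr)$, not $(a-b)+c\in\kappa(x')+(b+U)$; the computation you actually carry out is the one valid for $b+c$, not for $(a-b)+c$.

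The fix is to swap the roles of $b$ and $a-b$ throughout: translate by $(b,0)$. Then $(a,x)=(b,0)+(a-b,x)$ with $(a-b,x)\in[U\times X]\cap(A\times O)$, because $a-b\in\kappa(x)\subset\kappa(x)+U$ and $x\in O$; and if $(c,x')\in[U\times X]$ with $x'\in O$, writing $c=p+u$ with $p\in\kappa(x')$ and $u\in U$ gives $b+c=p+(b+u)$ with $b+u\in b+U\subset V$, so $(b+c,x')\sim(b+u,x')\in V\times O\subset W$, i.e.\ $(b+c,x')\in[W]$. Hence $(b,0)+\bigl([U\times X]\cap(A\times O)\bigr)$ is an open neighborhood of $(a,x)$ contained in $[W]$, which is what you wanted. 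With this correction your argument coincides in substance with the paper's proof, which restricts attention to basic boxes $U\times W$ and uses the same translation trick to reduce to the case where $U$ is a neighborhood of the origin.
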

\begin{proof}
The forward implication is trivial. In order to prove the reverse implication let $W\subset X$ and $U\subset A$ be open sets. We want to show that
$[U\times W]=q^{-1}\bigl(q(U\times W)\bigr)$ is an open set. We easily see that
\[
[U\times W]=\bigcup_{x\in W}\bigl(\kappa(x)+U\bigr)\times \{x\}
=\bigl\{(a,x)\in A\times W \st a\in \kappa(x)+U\bigr\}
\]
Given any $v\in A$ the translation 
$\tau_v: A\times X\to A\times X$ given 
by $\tau_v(a,x)=(a+v,x)$ is a homeomorphism
and $\tau_v[U\times W]=[(U+v)\times W]$,
so $q(U\times W)$ is open if and only if
$q\bigl((U+v)\times W\bigr)$ is open, and thus we may assume that
$U$ is a neighborhood of the origin. Now let 
$(a,x)\in [U\times W]\subset [U\times X]$. By hypothesis
$[U\times X]$ is open, so there are open sets $W'\subset X$ and $U'\subset A$
such that $(a,x)\in U'\times W'\subset [U\times X]$. Hence, 
$U'\times (W'\cap W) \subset [U\times W]$. \qedhere
\end{proof}

\begin{theorem}
Let $\kappa:X\to\Sub A$ be a map. The induced quotient map is open if and only if $\kappa$ is continuous with respect to the lower Vietoris topology.
\end{theorem}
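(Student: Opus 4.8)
The plan is to reduce the equivalence to the criterion already isolated in Lemma~\ref{lem:neighborigin}: the induced quotient map $q$ is open if and only if for every neighborhood $U$ of the origin in $A$ the set $[U\times X]=\{(a,x)\st a\in\kappa(x)+U\}$ is open in $A\times X$. So the whole task is to show that this ``openness of $[U\times X]$'' condition is equivalent to continuity of $\kappa:X\to\Sub A$ in the lower Vietoris topology, whose subbasic opens are the sets $\ps V=\{P\st P\cap V\neq\emptyset\}$ for $V\subset A$ open. Continuity of $\kappa$ means precisely that $\kappa^{-1}(\ps V)=\{x\in X\st \kappa(x)\cap V\neq\emptyset\}$ is open in $X$ for every open $V\subset A$.

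First, for the direction ``$\kappa$ continuous $\Rightarrow$ $[U\times X]$ open'': fix a neighborhood $U$ of the origin and a point $(a,x)\in[U\times X]$, so that $a=p+u$ for some $p\in\kappa(x)$ and $u\in U$. I want to produce a basic open rectangle around $(a,x)$ inside $[U\times X]$. Pick, using continuity of vector-space operations in $A$, a symmetric neighborhood $W$ of $0$ with $u+W-W\subset U$; then the open set $V:=a-u+W=p+W$ is an open neighborhood of $p$, so $x\in\kappa^{-1}(\ps V)$, which by hypothesis is open in $X$. For any $x'$ in that open set there is $p'\in\kappa(x')\cap V$, i.e.\ $p'=p+w$ with $w\in W$. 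Then for any $a'\in a+W$ we have $a'-p'=(a-p)+(a'-a)-w\in u+W-W\subset U$, so $a'\in\kappa(x')+U$; hence $(a+W)\times\kappa^{-1}(\ps V)$ is an open neighborhood of $(a,x)$ contained in $[U\times X]$. (Minor care is needed to handle the possibility that $a-u$ is used only as bookkeeping; the clean statement is that $p\in V$ and $V$ open suffices.)

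Conversely, assume every $[U\times X]$ is open, and let $V\subset A$ be open; I must show $\kappa^{-1}(\ps V)$ is open. Take $x_0$ with $\kappa(x_0)\cap V\neq\emptyset$, say $p\in\kappa(x_0)\cap V$, and choose a neighborhood $U$ of the origin with $p+U\subset V$; then $(p,x_0)\in[U\times X]$ because $p\in\kappa(x_0)+U$ (the origin lies in $U$). Wait --- more carefully, I want a point whose first coordinate records membership in $V$: since $p\in\kappa(x_0)$, the pair $(p,x_0)$ lies in $[U\times X]$ trivially, so by hypothesis there are open $U'\ni p$ in $A$ and $W'\ni x_0$ in $X$ with $U'\times W'\subset[U\times X]$; shrinking $U'$ we may assume $U'\subset p+U\subset V$. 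Then for every $x'\in W'$ and every $a'\in U'$ we have $a'\in\kappa(x')+U$, and since $U'\subset V$ is open we can in fact pick $a'=p$... hmm, that only gives $p\in\kappa(x')+U$, not a point of $\kappa(x')\cap V$. The fix is to exploit that $U'$ is a whole open set: pick any $a'\in U'$; then $a'-u'\in\kappa(x')$ for some $u'\in U$, and by shrinking $U$ at the outset so that $U'-U\subset V$ (possible since $p-0=p\in V$ and everything is continuous) we get $a'-u'\in\kappa(x')\cap V$, so $x'\in\kappa^{-1}(\ps V)$. Thus $W'$ is an open neighborhood of $x_0$ inside $\kappa^{-1}(\ps V)$.

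The main obstacle is precisely the bookkeeping in this converse direction: the set $[U\times X]$ only records the ``fattened'' condition $a\in\kappa(x)+U$, whereas $\ps V$ records the sharp condition $\kappa(x)\cap V\neq\emptyset$, so one must feed in an open set $U'$ (not a single point) and absorb the slack $U$ into the openness of $V$ by choosing $U$ small enough that $U'-U\subset V$. Once the neighborhoods are chosen in the right order this is routine, and the forward direction is an entirely standard continuity-of-addition argument. No compactness or separation hypotheses on $A$ or $X$ are needed.
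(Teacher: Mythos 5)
Your proof is correct and follows essentially the same route as the paper's: the forward direction is the paper's argument almost verbatim (reduce to Lemma~\ref{lem:neighborigin}, split $a=p+u$, shrink so that $u+W-W\subset U$, and use $\kappa^{-1}(\ps{V})$ with $V=p+W$), and the converse is the same translation between the fattened condition $a\in\kappa(x)+U$ and the sharp condition $\kappa(x)\cap V\neq\emptyset$. The one blemish is in the converse: as literally written, ``shrinking $U$ at the outset so that $U'-U\subset V$'' is circular, since $U'$ is produced from the openness of $[U\times X]$ and therefore depends on $U$; but because you have already arranged $U'\subset p+U$, it suffices to choose $U$ in advance with $p+U-U\subset V$, which is clearly what you intend, so the gap is only one of quantifier ordering. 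The paper sidesteps this bookkeeping entirely by taking the neighborhood of the origin to be $p-V$ for $p\in\kappa(x_0)\cap V$, so that $p\in\kappa(y)+(p-V)$ at the single point $p$ immediately gives $\kappa(y)\cap V\neq\emptyset$ with no shrinking or slack absorption.
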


\begin{proof}
First we assume that $\kappa$ is continuous. In order to prove that the induced quotient map is open we shall use Lemma~\ref{lem:neighborigin}. Let $U\subset A$ be a neighborhood of the origin and
let $(a,x)\in[U\times X]$. Then 
$a\in \kappa(x)+U$ so we can write $a=a_0+a_1$ with $a_0\in \kappa(x)$ and $a_1\in U$.
Consider the map $f: A\times A\to A$ given by $f(v_1,v_2)=a_1+v_1-v_2$
and let $U'\subset A$ be a neighborhood of the origin so that $f(U'\times U')=a_1+U'-U'\subset U$.
Since $\kappa$ is continuous and $\kappa(x)\in\ps{a_0+U'}$,
there is a neighborhood $W$ of $x$ such that,
for any $y\in W$, we have $\kappa(y)\cap (a_0+U')\neq\emptyset$.
Then $(a+U')\times W$ is a neighborhood of $(a,x)$, so we only have to show that
$(a+U')\times W\subset[U\times X]$.
Let $(a',x')\in (a+U')\times W$. We need to show that
$a'\in \kappa(x')+U$. Since $x'\in W$, we have $\kappa(x')\cap(a_0+U')\neq\emptyset$ and hence
$a_0\in \kappa(x')-U'$. Since $a'\in a+U'=a_0+a_1+U'$,
we have $a'\in a_1+U'-U'+\kappa(x')\subset U+\kappa(x')$, which
concludes the first half of the proof.

Now assume that the induced quotient map is open. Let $x\in X$ and let $U\subset A$
be an open set such that $\kappa(x)\in\ps U$ (equivalently: 
$U\cap \kappa(x)\neq\emptyset$). 
We want to find a neighborhood $W\subset X$ of $x$ so that
$U\cap \kappa(y)\neq\emptyset$ for every $y\in W$.
Let $a\in U\cap \kappa(x)$ and consider the neighborhood $U'=a-U$ of the origin.
Since $a\in \kappa(x)$ and $0\in U'$, we have
$(a,x)\in[U'\times X]$
which is, by hypothesis, open, so there is a neighborhood $W\subset X$
of $x$ such that $\{a\}\times W\subset [U'\times X]$
and, for every $y\in W$, we obtain $a\in \kappa(y)+U'=a+\kappa(y)-U$. It follows
that $0\in \kappa(y)-U$, and therefore $U\cap \kappa(y)\neq\emptyset$. \qedhere
\end{proof}

\paragraph{Universal bundles.}

Let $A$ be a topological vector space. We shall refer to the quotient vector bundle
\[
(\pi_A:E_A\to\Sub A, A, q)
\]
whose kernel map is the identity map $\ident:\Sub A\to\Sub A$ as the \emph{universal quotient vector bundle for $A$}.

The following is an immediate corollary of the previous results:

\begin{theorem}
Every quotient vector bundle $(\pi:E\to X,A,q)$ is isomorphic to the pullback of the universal bundle $\pi_A$ along the continuous map
\[\kappa:X\to\Sub A\]
which is defined by $\kappa(x)=\ker q_x$.
Moreover, the fibers $E_x$ are Hausdorff spaces if and only if $\kappa$ is valued in $\Max A$.
\end{theorem}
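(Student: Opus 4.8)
The plan is to identify $(\pi,A,q)$ with the bundle induced by its own kernel map $\kappa$, and then to read off everything from the results already established for induced bundles.

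First I would note that, being an open surjection, $q:A\times X\to E$ is a topological quotient map, and that its point-inverses are exactly the equivalence classes of the relation $\sim_\kappa$ used in the construction preceding Theorem~\ref{lem:qlb0}: since $q$ is a map over $X$, $q(a,x)=q(b,y)$ forces $x=y$, and then $q_x(a)=q_x(b)$ iff $a-b\in\ker q_x=\kappa(x)$ by linearity of $q_x$. Hence, writing $\pi':E'\to X$ and $q':A\times X\to E'$ for the bundle and quotient map induced by $\kappa$, the canonical continuous bijection $\bar q:E'\to E$ over $X$ supplied by the universal property of the quotient $q'$ is in fact a homeomorphism: $E$ and $E'$ both carry the quotient topology determined by the same surjection with the same fibers, so for $U\subset E'$ open one has $q^{-1}(\bar q(U))=(q')^{-1}(U)$ open, which makes $\bar q$ an open map. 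Since on each fiber $E_x\cong A/\kappa(x)$ the linear structure is the one forced by $q_x$, this $\bar q$ is an isomorphism of linear bundles over $X$ intertwining $q'$ with $q$.

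Because $q$ is open and $\bar q$ is a homeomorphism, $q'=\bar q^{-1}\circ q$ is open, so by the theorem characterizing openness of an induced quotient map in terms of the lower Vietoris topology, $\kappa:X\to\Sub A$ is continuous. We may then form the pullback $\kappa^*(\pi_A)$ of the universal bundle; by Lemma~\ref{qbpbs} it is a quotient bundle, and by the explicit description of the pullback quotient map there it is the quotient vector bundle with quotient map $(a,x)\mapsto(q_A(a,\kappa(x)),x)$, where $q_A$ denotes the universal quotient map. Its kernel at $x$ is $\{a\st q_A(a,\kappa(x))=0\}=\ker(q_A)_{\kappa(x)}=\kappa(x)$, using that the kernel map of $\pi_A$ is the identity. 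So $\kappa^*(\pi_A)$ is a quotient vector bundle over $X$ built from $A$ whose kernel map is $\kappa$; applying the previous paragraph to it, it is isomorphic to the bundle induced by $\kappa$, and therefore isomorphic to $(\pi,A,q)$.

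Finally, the bundle induced by $\kappa$ satisfies the equivalent conditions of Theorem~\ref{lem:qlb0}, since its quotient map $q'$ is open, so Corollary~\ref{MaxvsT1} applies to it and shows its fibers --- and hence the fibers $E_x$ --- are Hausdorff if and only if $\kappa$ is valued in $\Max A$. The only delicate point is the first paragraph: one must check that $q$ really is a topological quotient map whose fibers are exactly the $\sim_\kappa$-classes, so that the two quotient topologies on $E$ and $E'$ coincide; granting that, the remainder is bookkeeping on top of Lemma~\ref{qbpbs}, Theorem~\ref{lem:qlb0} and Corollary~\ref{MaxvsT1}.
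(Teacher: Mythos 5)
Your proof is correct and follows exactly the route the paper intends: the paper states this theorem without proof as an ``immediate corollary of the previous results,'' and your argument is precisely the fleshed-out version, identifying the bundle with the one induced by its own kernel map, invoking the lower-Vietoris continuity theorem and Lemma~\ref{qbpbs} to match it with $\kappa^*(\pi_A)$, and finishing with Corollary~\ref{MaxvsT1}. The one point you flag as delicate --- that $q$, being a continuous open surjection over $X$ with fiberwise-linear restrictions $q_x$, is a topological quotient whose point-inverses are exactly the $\sim_\kappa$-classes --- is handled correctly.
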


\paragraph{Fell topology.}

Let $A$ be a topological vector space. For each compact set $K\subset A$ we define
\[\ft K = \{P\in\Max A\st P\cap K=\emptyset\}\;,\]
and the \emph{Fell topology} on $\Max A$ is the coarsest topology that contains the lower Vietoris topology and all the sets $\ft K$ \cite{Fell62} (see also \cite{NT96}). We shall use analogous notation for finite dimensional linear subspaces $V\subset A$, as follows:
\[
\ft V=\bigl\{P\in \Max A\st P\cap V=\{0\}\bigr\}\;.
\]
Note that if $K_1,K_2\subset A$ are compact and $K_3=K_1\cup K_2$ then $\ft K_1\cap\ft K_2=\ft K_3$, and
therefore a basis for the Fell topology consists of all the sets of the form
\[
\ps{U_1}\cap\ldots\cap\ps{U_m}\cap\ft K\;,
\]
where the $U_i$'s are open sets of $A$ and $K\subset A$ is compact.

\begin{lemma}\label{lemmaJP-UVopen}
Let $A$ be a Hausdorff vector space.
For any finite dimensional subspace
$V\subset A$ the set $\ft V$
is open in the Fell topology.
\end{lemma}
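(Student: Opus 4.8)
The plan is to identify $\ft V$ with a set of the form $\ft K$ for a suitable compact $K\subset A$; since all such sets belong to the subbasis of the Fell topology, this immediately gives openness of $\ft V$. The natural candidate for $K$ is a unit sphere of $V$.

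First I would invoke the hypothesis that $A$ is Hausdorff together with the fact recalled in Section~\ref{sec:defs} that a finite dimensional Hausdorff vector space necessarily carries the Euclidean topology. Hence the subspace topology that $V$ inherits from $A$ agrees with the topology transported from $\CC^n$ along some linear isomorphism $V\cong\CC^n$. Fixing such an isomorphism, transport a norm $\norm{\cdot}$ to $V$ and put $S=\{v\in V : \norm v=1\}$. Then $S$ is compact in the Euclidean topology of $V$, hence --- compactness of a subset depending only on its own subspace topology --- a compact subset of $A$, so $\ft S$ is open in the Fell topology.

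It remains to check that $\ft V=\ft S$. If $P\in\Max A$ satisfies $P\cap V=\{0\}$, then, as $0\notin S\subset V$, we have $P\cap S\subset (P\cap V)\setminus\{0\}=\emptyset$, so $P\in\ft S$. Conversely, suppose $P\cap S=\emptyset$; if some nonzero $v$ belonged to $P\cap V$, then $v/\norm v$ would lie both in $S$ and, since $P$ is a linear subspace, in $P$, contradicting $P\cap S=\emptyset$. Hence $P\cap V=\{0\}$, and the two sets coincide.

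I do not expect a genuine obstacle here. The one point requiring care is the claim that the unit sphere $S$ is a compact subset of $A$, and this is exactly where the Hausdorff hypothesis is used: it forces the subspace topology on the finite dimensional $V$ to be Euclidean, so that $S$, being closed and bounded in $V$, is compact there and therefore in $A$.
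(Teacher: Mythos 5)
Your proof is correct and follows exactly the paper's argument: the paper likewise takes $K$ to be the unit sphere of $V$ in some norm, notes that $V$ carries the Euclidean topology because $A$ is Hausdorff, and concludes $\ft V=\ft K$ with $K$ compact. You merely spell out the verification of $\ft V=\ft K$, which the paper leaves implicit.
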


\begin{proof}
Let $V\subset A$ be a finite dimensional vector space, hence with the Euclidean topology, and let $K$ be the unit sphere in $V$ in some norm. Then $K$ is compact and
$\ft V=\ft K$.  \qedhere
\end{proof}

\paragraph{Bundles classified by the Fell topology.}

If $X$ is a topological space, we shall say that a map $\kappa:X\to\Max A$ is \emph{Fell-continuous} if it is continuous with respect to the Fell topology of $\Max A$. 
The Fell topology contains the lower Vietoris topology and therefore Fell-continuous maps $\kappa:X\to\Max A$ determine quotient vector bundles $(\pi,A,q)$.

\begin{lemma}
Let $(\pi:E\to X,A,q)$ be a quotient vector bundle with Fell-continuous kernel map $\kappa:X\to\Max A$, for some Hausdorff vector space $A$. For any $x\in X$ such that $E_x$ is finite dimensional there is a neighborhood $U$ of
$x$ such that $\dim E_y\geq \dim E_x$ for any $y\in U$.
\end{lemma}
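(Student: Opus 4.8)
The plan is to reduce the statement to two facts already available: the Fell-openness of the sets $\ft V$ for finite dimensional $V$ (Lemma~\ref{lemmaJP-UVopen}), and the linear independence criterion of Lemma~\ref{lemmaJP-linindep}. Write $d=\dim E_x$. Since $E_x$ is the image of $q_x:A\to E_x$ with kernel $\kappa(x)$, there is a linear isomorphism $E_x\cong A/\kappa(x)$, so $\kappa(x)$ has codimension $d$ in $A$.

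First I would lift a basis of $E_x$ to $A$: choose $a_1,\dots,a_d\in A$ such that $\hat a_1(x)=q_x(a_1),\dots,\hat a_d(x)=q_x(a_d)$ form a basis of $E_x$, and set $V=\linspan{a_1,\dots,a_d}$. Because the images $\hat a_i(x)$ are linearly independent, $q_x$ restricts to a linear isomorphism $V\to E_x$; hence $\dim V=d$ and $V\cap\kappa(x)=\ker\bigl(q_x\vert_V\bigr)=\{0\}$, i.e. $\kappa(x)\in\ft V$.

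Next, since $A$ is Hausdorff and $V$ is finite dimensional, Lemma~\ref{lemmaJP-UVopen} gives that $\ft V$ is open in the Fell topology of $\Max A$. As $\kappa$ is Fell-continuous, $U:=\kappa^{-1}(\ft V)$ is an open neighborhood of $x$. For any $y\in U$ we have $\kappa(y)\cap V=\{0\}$, so Lemma~\ref{lemmaJP-linindep}, applied to the linearly independent vectors $a_1,\dots,a_d$ and their span $V$, shows that $\hat a_1(y),\dots,\hat a_d(y)$ are linearly independent in $E_y$. Therefore $\dim E_y\ge d=\dim E_x$ for every $y\in U$, as required.

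I do not expect a genuine obstacle here: the argument is a direct assembly of the preceding lemmas. The only point worth stating carefully is that one must realize a complement of $\kappa(x)$ concretely as a finite dimensional subspace $V\subset A$ (so that $\ft V$ is defined and Fell-open), rather than passing to the abstract quotient $A/\kappa(x)$; once $V$ is chosen this way, the fact that $q_x\vert_V$ is an isomorphism is immediate from the choice of the $\hat a_i(x)$ as a basis, and everything else follows.
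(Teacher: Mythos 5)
Your proof is correct and follows essentially the same route as the paper's: lift a basis of $E_x$ to $a_1,\dots,a_d\in A$, set $V=\linspan{a_1,\dots,a_d}$, use Lemma~\ref{lemmaJP-UVopen} to get that $U=\kappa^{-1}(\ft V)$ is an open neighborhood of $x$, and conclude via Lemma~\ref{lemmaJP-linindep}. The only difference is that you spell out why $\kappa(x)\cap V=\{0\}$ (i.e.\ why $x\in U$), which the paper leaves implicit.
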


\begin{proof}
Let $x\in X$ and $a_1,\ldots,a_n\in A$ be such that $\hat a_1(x),\ldots,\hat a_n(x)$ is a basis of $E_x$. Let also $V=\linspan{ a_1,\ldots,a_n}$ and $U=\kappa^{-1}(\ft V)$. Then $x\in U$ and, by Lemma~\ref{lemmaJP-UVopen}, $U$ is open. For any $y\in U$ we have $\kappa(y)\cap V=\{0\}$ and thus, by Lemma~\ref{lemmaJP-linindep}, $\hat a_1(y),\ldots,\hat a_n(y)$ are linearly independent in $E_y$.  \qedhere
\end{proof}

The following result provides a characterization of what it means for a bundle to have a Fell-continuous kernel map.

\begin{theorem}\label{fellcont}
Let $(\pi:E\to X,A,q)$ be a quotient vector bundle with kernel map $\kappa:X\to\Max A$.
If the image of the zero section of the bundle is closed then $\kappa$ is Fell-continuous. In addition, if both $X$ and $A$ are first countable, the image of the zero section is closed if and only if $\kappa$ is Fell-continuous.
\end{theorem}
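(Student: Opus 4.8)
The plan is to prove the two implications separately, noting at the outset that in both directions $\kappa$ is automatically continuous for the lower Vietoris topology — this is part of being a quotient vector bundle, since the quotient map $q$ is open — so, the Fell topology being generated by the lower Vietoris subbasic sets $\ps U$ together with the sets $\ft K$ ($K\subseteq A$ compact), Fell-continuity of $\kappa$ amounts exactly to openness of every set $\kappa^{-1}(\ft K)$. Throughout I use the remark before Theorem~\ref{hausdorffbundles}: the image of the zero section is closed if and only if $q^{-1}(0)=\{(a,x)\in A\times X\st a\in\kappa(x)\}$ is closed in $A\times X$; and that each $\kappa(x)\in\Max A$ is a closed linear subspace of $A$.

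\emph{Zero section closed $\Rightarrow$ Fell-continuous.} Fix a compact set $K\subseteq A$ and take $x_0\in\kappa^{-1}(\ft K)$, i.e. $\kappa(x_0)\cap K=\emptyset$. For each $a\in K$ we have $(a,x_0)\notin q^{-1}(0)$, and since $q^{-1}(0)$ is closed there are open neighbourhoods $V_a\ni a$ in $A$ and $W_a\ni x_0$ in $X$ with $(V_a\times W_a)\cap q^{-1}(0)=\emptyset$. Choosing a finite subcover $V_{a_1},\dots,V_{a_n}$ of $K$ and putting $W=W_{a_1}\cap\dots\cap W_{a_n}$, one checks that for $y\in W$ no point of $K$ lies in $\kappa(y)$: if $b\in K\cap\kappa(y)$ then $b\in V_{a_i}$ and $y\in W_{a_i}$ for some $i$, contradicting $(V_{a_i}\times W_{a_i})\cap q^{-1}(0)=\emptyset$. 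Hence $W\subseteq\kappa^{-1}(\ft K)$, so $\kappa^{-1}(\ft K)$ is open. No countability hypothesis is needed here.

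\emph{Fell-continuous $\Rightarrow$ zero section closed, when $A$ and $X$ are first countable.} Then $A\times X$ is first countable, so it suffices to show $q^{-1}(0)$ is sequentially closed. Let $(a_n,x_n)\to(a,x)$ with $a_n\in\kappa(x_n)$ for all $n$, and suppose for contradiction that $a\notin\kappa(x)$. Because $\kappa(x)$ is closed in $A$, it is impossible that $a_n\in\kappa(x)$ for all large $n$ (that would force the limit $a$ into $\kappa(x)$); so, passing to a subsequence, we may assume $a_n\notin\kappa(x)$ for every $n$, with still $(a_n,x_n)\to(a,x)$. Now the set $K:=\{a\}\cup\{a_n\st n\in\NN\}$ — a convergent sequence together with its limit — is compact, and $K\cap\kappa(x)=\emptyset$ by construction, so $x\in\kappa^{-1}(\ft K)$, which is open by Fell-continuity. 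Therefore $x_n\in\kappa^{-1}(\ft K)$ for all large $n$, i.e. $\kappa(x_n)\cap K=\emptyset$; but $a_n\in\kappa(x_n)\cap K$, a contradiction. Hence $a\in\kappa(x)$, so $q^{-1}(0)$ is closed and the zero section has closed image.

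The one delicate point — and the only essential use of first countability beyond the reduction to sequences — is the step forcing infinitely many $a_n$ out of $\kappa(x)$, which rests precisely on $\kappa$ being valued in $\Max A$, i.e. on $\kappa(x)$ being a \emph{closed} subspace. Once that reduction is made, manufacturing the compact test set $K$ and invoking Fell-continuity is routine; note in particular that $A$ need not be assumed Hausdorff anywhere.
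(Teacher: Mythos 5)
Your proof is correct and follows essentially the same route as the paper's: the forward direction via a finite subcover of $K$ by product neighbourhoods missing $q^{-1}(0)$, and the converse by testing a convergent sequence in $q^{-1}(0)$ against the compact set $K=\{a\}\cup\{a_n\}$ after discarding the terms lying in the closed subspace $\kappa(x)$. The preliminary reduction of Fell-continuity to openness of the sets $\kappa^{-1}(\ft K)$ is also implicit in the paper, so there is nothing to add.
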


\begin{proof}
Assume
that the image of the zero section is closed. Let
$K\subset A$ be a compact set and let $x\in\kappa^{-1}(\ft K)$ --- that is, 
$\kappa(x)\cap K=\emptyset$.
Then, for each $a\in K$ we have $(a,x)\notin q^{-1}(0)$.
By hypothesis $q^{-1}(0)$ is closed, and thus there are open sets $U_a\subset X$ and 
$W_a\subset A$ such that $(a,x)\in W_a\times U_a$ and 
$(W_a\times U_a)\cap q^{-1}(0)=\emptyset$. The collection $\{W_a\}$ forms
an open cover of $K$, whence there is a finite subcover $W_{a_1},\dots,W_{a_n}$. Let $U=\bigcap_i U_{a_i}$. Then $(K\times U)\cap q^{-1}(0)=\emptyset$. In other words, for every $y\in U$ we have $\kappa(y)\cap K=\emptyset$, and thus $x\in U\subset\kappa^{-1}(\ft K)$.
We showed that $\kappa$ is Fell-continuous.

Now assume that $\kappa$ is Fell-continuous and that $A$ and $X$ are both first countable.
Suppose $q^{-1}(0)$ is not closed. 
Then there is a converging sequence $(a_n,x_n)$
in $q^{-1}(0)$ with limit $(a,x)\notin q^{-1}(0)$. 
Since $a_n\to a$ and $a\notin \kappa(x)$, and $\kappa(x)$ is closed, there is
a positive integer $p$ such that $a_n\notin \kappa(x)$ for all $n>p$. 
Consider the compact set
$K=\{a_n\st n>p\}\cup\{a\}\subset A$. Then, by the continuity of $\kappa$, 
the set $U=\{y\in X\st \kappa(y)\cap K=\emptyset\}$ is open in $X$ and $x\in U$.
But $x_n\to x$ and
$a_n\in \kappa(x_n)$ for all $n$, and thus for some $n$ we have $(a_n,x_n)\notin q^{-1}(0)$, which is a contradiction. Therefore $q^{-1}(0)$ is closed in $A\times X$.  \qedhere
\end{proof}

We have thus obtained a new necessary condition for local triviality, in terms of Fell-continuity:

\begin{corollary}\label{loctrivimpliesfell}
Let $(\pi:E\to X,A,q)$ be a quotient vector bundle with kernel map $\kappa:X\to\Max A$, such that $\pi$ is locally trivial. Then $\kappa$ is Fell-continuous.
\end{corollary}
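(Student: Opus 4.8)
The plan is to chain together three earlier results. First I would observe that the corollary's hypothesis already puts $\kappa$ into $\Max A$ rather than merely $\Sub A$; by Corollary~\ref{MaxvsT1} (applied in the presence of the equivalent conditions of Theorem~\ref{lem:qlb0}, which hold since $(\pi,A,q)$ is a quotient vector bundle), this forces every fiber $E_x$ to be a Hausdorff topological vector space. So we are in the situation of a \emph{locally trivial} vector bundle \emph{with Hausdorff fibers}.

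Next I would invoke Lemma~\ref{lemma:loctriv}, which says precisely that for such a bundle the image of the zero section is a closed subset of $E$. Finally, the first assertion of Theorem~\ref{fellcont} states that if the image of the zero section of a quotient vector bundle is closed then its kernel map $\kappa:X\to\Max A$ is Fell-continuous — and this direction requires no first-countability hypotheses on $X$ or $A$. Combining these three steps gives Fell-continuity of $\kappa$ immediately.

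There is essentially no hard step here: the corollary is a formal consequence of Lemma~\ref{lemma:loctriv}, Corollary~\ref{MaxvsT1}, and Theorem~\ref{fellcont}. The only point that deserves a word of care is the reduction to Hausdorff fibers — this is exactly where the hypothesis that $\kappa$ is valued in $\Max A$ (and not just $\Sub A$) is used, since Lemma~\ref{lemma:loctriv} genuinely needs the typical fiber $V$ to be Hausdorff in order to produce the open neighborhood of a nonzero vector that misses the zero section. So the write-up would be three sentences citing these results in sequence.
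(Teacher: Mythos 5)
Your proposal is correct and follows exactly the paper's argument: the paper proves this corollary by citing the same three results (Lemma~\ref{lemma:loctriv}, Corollary~\ref{MaxvsT1}, and Theorem~\ref{fellcont}) in the same order, and your remark that only the zero-section-closed-implies-Fell-continuous direction of Theorem~\ref{fellcont} is needed (so no first countability is required) is the right observation.
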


\begin{proof}
This is an immediate consequence of Lemma~\ref{lemma:loctriv}, Corollary~\ref{MaxvsT1}, and Theorem~\ref{fellcont}. \qedhere
\end{proof}

We also record some simplified consequences of Theorem~\ref{fellcont} when the spaces involved are all good enough:

\begin{corollary}\label{fellcorollary}
Let $(\pi:E\to X,A,q)$ be a quotient vector bundle with kernel map $\kappa:X\to\Max A$, such that both $X$ and $A$ are first countable Hausdorff spaces. The following are equivalent:
\begin{enumerate}
\item $\kappa$ is Fell-continuous.
\item The image of the zero section of the bundle is closed.
\item $E$ is a Hausdorff space.
\end{enumerate}
\end{corollary}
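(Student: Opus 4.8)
The plan is to obtain this statement as a routine combination of Theorem~\ref{fellcont} and Theorem~\ref{hausdorffbundles}, both of whose hypotheses are subsumed by those of the corollary. Nothing new needs to be proved; the corollary merely records that, in the presence of first countability and the Hausdorff axiom on $X$ and $A$, the three conditions "Fell-continuous kernel map", "closed zero section" and "Hausdorff total space" collapse to a single condition.

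First I would establish $(1)\Leftrightarrow(2)$. Since $X$ and $A$ are first countable, the second assertion of Theorem~\ref{fellcont} applies to $(\pi,A,q)$ as stated, and it says precisely that the image of the zero section of the bundle is closed if and only if $\kappa$ is Fell-continuous. Next I would establish $(2)\Leftrightarrow(3)$. Since $X$ and $A$ are Hausdorff, the second assertion of Theorem~\ref{hausdorffbundles} applies, giving that $E$ is Hausdorff if and only if the image of the zero section is closed. Chaining the two biconditionals yields the desired three-way equivalence.

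I expect no genuine obstacle: the substantive arguments have already been carried out in the two cited theorems, and this is a bookkeeping corollary. The only point that might warrant a remark is that the hypothesis $\kappa:X\to\Max A$ (rather than merely $\kappa:X\to\Sub A$) is harmless and is in any case what one must assume if one wants Hausdorff fibres, by Corollary~\ref{MaxvsT1}; but it is not actually used in the deduction above, which only invokes Theorems~\ref{fellcont} and~\ref{hausdorffbundles} directly.
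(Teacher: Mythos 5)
Your proposal is correct and matches the paper's own proof, which likewise derives the corollary immediately from Theorem~\ref{hausdorffbundles} and Theorem~\ref{fellcont}, using first countability for the equivalence $(1)\Leftrightarrow(2)$ and the Hausdorff hypotheses on $X$ and $A$ for $(2)\Leftrightarrow(3)$. No further comment is needed.
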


\begin{proof}
Immediate consequence of Theorem~\ref{hausdorffbundles} and Theorem~\ref{fellcont}. \qedhere
\end{proof}

\paragraph{The closed balls topology.}

For a normed vector space $A$ we shall consider a refinement of the Fell topology on $\Max A$ such that instead of defining open sets $\ft K$ for compact $K$ we shall consider instead a family of open sets indexed by $A\times\RR_{>0}$ as follows: for each $a\in A$ and $r>0$ we let
\[
\cb a r = \{P\in\Max A\st d(a,P)>r\}\;.
\]
The coarsest topology that contains the lower Vietoris topology and the open sets $\cb a r$ has a basis consisting of sets of the form
\[
\ps{U_1}\cap\ldots\cap\ps{U_m}\cap\cb{a_1}{r_1}\cap\ldots\cap \cb{a_k}{r_k}\;,
\]
where the $U_i$'s are open sets of $A$. We refer to this topology as the \emph{closed balls topology} on $\Max A$ because for all $P\in\cb a r$ we have $P\cap\overline{B_r(a)}=\emptyset$, and thus the definition of $\cb a r$ resembles that of $\ft K$ if we replace the compact set $K$ by the closed ball $B:=\overline{B_r(a)}$. Indeed, if $A$ is reflexive (for instance a Hilbert space) we have $\cb a r=\check B$, although more generally, for any normed space $A$, only the inclusion $\cb a r\subset\check B$ holds.

\begin{lemma}\label{lemmaJP-d(k(x),K)}
Let $A$ be a normed vector space. For every $\varepsilon >0$ and
every compact $K\subset A$, the set 
\[\ft K_\varepsilon=\{P\in \Max A\st d\bigl(P,K\bigr)>\varepsilon\bigr\}\] is open in the closed balls topology of $\Max A$.
\end{lemma}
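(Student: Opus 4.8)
The idea is to show that $\ft K_\varepsilon$ is a union of basic open sets of the closed balls topology, by covering $K$ with finitely many small balls. The key geometric observation is that if $d(P,K) > \varepsilon$ then $K$ is contained in the complement of the $\varepsilon$-neighbourhood of $P$, so no point of $K$ is within $\varepsilon$ of $P$; conversely, if we have finitely many points $a_1,\ldots,a_n$ such that the balls $B_{\delta}(a_i)$ cover $K$ (for a suitable $\delta < \varepsilon$), and if $d(a_i, P) > \varepsilon - \delta$ for each $i$, then for any $k \in K$, picking $i$ with $\norm{k - a_i} < \delta$, the triangle inequality gives $d(k,P) \geq d(a_i,P) - \norm{k-a_i} > (\varepsilon - \delta) - \delta$; choosing $\delta$ appropriately this can be made $\geq 0$, but we actually want strict positivity of $d(P,K)$ uniformly, so we must be a little careful with the constants.

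First I would fix $P \in \ft K_\varepsilon$, so $d(P,K) = \inf_{k \in K}d(k,P) > \varepsilon$. Since $K$ is compact and $k \mapsto d(k,P)$ is continuous, this infimum is attained, so in fact $d(P,K) = \varepsilon'$ for some $\varepsilon' > \varepsilon$. Choose $\delta > 0$ with $\delta < (\varepsilon' - \varepsilon)/2$; by compactness $K$ is covered by finitely many open balls $B_{\delta}(a_1),\ldots,B_{\delta}(a_n)$ with $a_i \in K$ (or $a_i \in A$ — taking the centres in $K$ is harmless). Each $a_i$ satisfies $d(a_i,P) \geq \varepsilon' > \varepsilon + 2\delta > \varepsilon + \delta$, so $P \in \cb{a_i}{\varepsilon + \delta}$ for every $i$. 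Hence $P$ lies in the basic open set $W := \cb{a_1}{\varepsilon+\delta} \cap \ldots \cap \cb{a_n}{\varepsilon+\delta}$.

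Next I would check $W \subset \ft K_\varepsilon$. Let $Q \in W$, so $d(a_i,Q) > \varepsilon + \delta$ for all $i$. For any $k \in K$ choose $i$ with $\norm{k - a_i} < \delta$; then for every $p \in Q$ we have $\norm{k - p} \geq \norm{a_i - p} - \norm{k - a_i} > \norm{a_i - p} - \delta$, and taking the infimum over $p \in Q$ gives $d(k,Q) \geq d(a_i,Q) - \delta > \varepsilon$. Thus $d(k,Q) > \varepsilon$ for every $k \in K$, and since this bound is uniform we get $d(Q,K) = \inf_{k} d(k,Q) \geq \varepsilon$; in fact a slightly more careful bookkeeping (e.g.\ noting $d(k,Q) > \varepsilon + \delta - \delta = \varepsilon$ is already strict and, being derived from finitely many strict inequalities $d(a_i,Q) > \varepsilon+\delta$, leaves room) shows $d(Q,K) > \varepsilon$, so $Q \in \ft K_\varepsilon$. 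Therefore $\ft K_\varepsilon = \bigcup_{P \in \ft K_\varepsilon} W_P$ is open.

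**Main obstacle.** The only subtlety I anticipate is the passage from the pointwise strict inequalities to a genuine strict inequality $d(Q,K) > \varepsilon$ rather than merely $\geq \varepsilon$ — i.e.\ making sure the slack introduced by the finite cover is enough. This is handled entirely by choosing the radius in the $\cb{a_i}{\cdot}$ to be $\varepsilon + \delta$ (strictly larger than $\varepsilon + $ the cover mesh bound we actually need), together with the fact that a finite intersection of the basic opens $\cb a r$ is again basic open, as recorded in the definition of the closed balls topology. No first-countability or Hausdorff hypothesis on $A$ is needed here beyond it being normed.
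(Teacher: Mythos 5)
Your proof is correct and follows essentially the same route as the paper's: cover $K$ by finitely many $\delta$-balls centred at points $a_i\in K$ and take the basic open set $\bigcap_i\cb{a_i}{r}$. The only difference is the choice of radius --- the paper takes $r=\varepsilon'$ with $\varepsilon<\varepsilon'<d(P,K)$ and $\delta<\varepsilon'-\varepsilon$, so the strict bound $d(Q,K)\ge\varepsilon'-\delta>\varepsilon$ falls out immediately, whereas your choice $r=\varepsilon+\delta$ forces the extra (valid) remark that the finite minimum of the strict inequalities $d(a_i,Q)>\varepsilon+\delta$ leaves a positive margin.
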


\begin{proof}
Let $\varepsilon>0$, and fix $P\in \ft K_\varepsilon$. Choose $\varepsilon'$ and $\delta$ so that
$\varepsilon<\varepsilon'<d(P,K)$ and $\delta<\varepsilon'-\varepsilon$.
Cover $K$ by a finite number of balls $B_{\delta}(a_i)$ with $a_i\in K$ and let
$U=\bigcap_i\cb{a_i}{\varepsilon'}$. Then $U$ is open in the closed balls topology, and $P\in U$. 
We will show that $U\subset \ft K_\varepsilon$.
Let $Q\in U$. Given $u\in Q$ and $v\in K$, there is $i$ such that
$\norm{v-a_i}<\delta$. Then
\[
\varepsilon'<\norm{u-a_i}\leq\norm{u-v}+\norm{v-a_i}<\norm{u-v}+\delta\;,
\]
so $\norm{u-v}>\varepsilon'-\delta$. Since $u$ and $v$ are arbitrary we get 
$d(Q,K)\geq\varepsilon'-\delta>\varepsilon$, and thus $Q\in \ft K_\varepsilon$.
Hence, $\ft K_\varepsilon$ is open in the closed balls topology.  \qedhere
\end{proof}

\begin{lemma}\label{cbcontainsfell}
Let $A$ be a normed vector space. The closed balls topology of $\Max A$ contains the Fell topology.
\end{lemma}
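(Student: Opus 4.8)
The plan is to exploit the fact that, by construction, the closed balls topology already contains the lower Vietoris topology; hence it will be enough to show that each set $\ft K$ with $K\subset A$ compact is open in the closed balls topology. Since a basis for the Fell topology consists of the sets $\ps{U_1}\cap\ldots\cap\ps{U_m}\cap\ft K$ with the $U_i$ open in $A$ and $K$ compact, and the sets $\ps{U_i}$ are lower Vietoris open (hence closed balls open), this will give the inclusion of topologies claimed.

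The key observation I would record is that for any compact $K\subset A$ one has
\[
\ft K=\bigcup_{n\ge 1}\ft K_{1/n}\;,
\]
where $\ft K_\varepsilon=\{P\in\Max A\st d(P,K)>\varepsilon\}$ is the set treated in Lemma~\ref{lemmaJP-d(k(x),K)}. The inclusion $\supseteq$ is immediate. For $\subseteq$, I would argue that if $P\in\Max A$ satisfies $P\cap K=\emptyset$ then $d(P,K)>0$: otherwise there would be sequences $p_j\in P$ and $k_j\in K$ with $\norm{p_j-k_j}\to 0$, and by compactness of $K$ a subsequence of $(k_j)$ would converge to some $k\in K$; along that subsequence $p_j\to k$, so $k\in P$ because $P$ is closed, contradicting $P\cap K=\emptyset$. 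Thus every $P\in\ft K$ lies in some $\ft K_{1/n}$.

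To finish, I would invoke Lemma~\ref{lemmaJP-d(k(x),K)} to conclude that each $\ft K_{1/n}$ is open in the closed balls topology, so $\ft K$ is a countable union of such sets and therefore open; together with the reduction in the first paragraph this shows the closed balls topology contains the Fell topology.

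The only mildly delicate point is the metric-space fact that a closed set and a disjoint compact set sit at strictly positive distance (used to express $\ft K$ as the union of the $\ft K_{1/n}$); everything else is routine bookkeeping about which families of sets generate which topologies, and I do not anticipate any further obstacle.
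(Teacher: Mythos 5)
Your proof is correct and follows essentially the same route as the paper: both express $\ft K$ as the union of the sets $\ft K_\varepsilon$ and then invoke Lemma~\ref{lemmaJP-d(k(x),K)}, the only cosmetic difference being that you establish $d(P,K)>0$ by a sequential compactness argument where the paper uses continuity of $d(P,-)$ on the compact set $K$. No gaps.
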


\begin{proof}
Let $K\subset A$ be compact, and recall the definition of $\ft K_\varepsilon$ from Lemma~\ref{lemmaJP-d(k(x),K)}. Clearly,
\[
\bigcup_{\varepsilon >0} \ft K_\varepsilon\subset\ft K\;.
\]
In order to see that also the converse inclusion holds let $P\in\ft K$. The condition $P\cap K=\emptyset$ implies $d(P,k)>0$ for all $k\in K$ because $P$ is closed, and this further implies $d(P,K)>0$ because $d(P,-):K\to\RR$ is continuous and $K$ is compact. Hence, choosing $\varepsilon$ such that
$d(P,K)>\varepsilon>0$ we obtain $P\in\ft K_\varepsilon$, and thus
\[
\bigcup_{\varepsilon >0} \ft K_\varepsilon=\ft K\;. \qedhere
\]
\end{proof}

\paragraph{Normed bundles and Banach bundles.} The closed balls topology classifies continuous normed bundles, as we now show.

\begin{theorem}\label{classcontnorm}
Let $(\pi:E\to X,A,q)$ be a normed quotient vector bundle with kernel map $\kappa:X\to\Max A$. The following are equivalent:
\begin{enumerate}
\item\label{cln1} The bundle is continuous;
\item\label{cln2} $\kappa$ is continuous with respect to the closed balls topology.
\end{enumerate}
\end{theorem}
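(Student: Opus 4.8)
The plan is to reduce both conditions to statements about the single function $(a,x)\mapsto d\bigl(a,\kappa(x)\bigr)=\norm{q(a,x)}$. Since the quotient norm on $E$ is always upper semicontinuous (Theorem~\ref{thm:normedqvb}), the bundle is continuous precisely when this norm is lower semicontinuous, that is, when for each $c>0$ the set $\{e\in E\st\norm e>c\}$ is open in $E$. As $q$ is a continuous open surjection it is a topological quotient map, so this holds if and only if
\[
q^{-1}\bigl(\{e\in E\st\norm e>c\}\bigr)=\{(a,x)\in A\times X\st d(a,\kappa(x))>c\}
\]
is open in $A\times X$ for every $c>0$. On the other side, the closed balls topology is generated by the lower Vietoris subbasic sets $\ps U$ together with the sets $\cb a r$; and $\kappa$, being the kernel map of a quotient vector bundle, is already continuous for the lower Vietoris topology. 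Hence $\eqref{cln2}$ is equivalent to asking that
\[
\kappa^{-1}(\cb a r)=\{x\in X\st d(a,\kappa(x))>r\}
\]
be open in $X$ for all $a\in A$ and all $r>0$.

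Granting these reformulations, the implication $\eqref{cln1}\Rightarrow\eqref{cln2}$ is immediate: for fixed $a\in A$ and $r>0$ the set $\kappa^{-1}(\cb a r)$ is the preimage of the open set $\{(b,x)\st d(b,\kappa(x))>r\}$ under the continuous slice map $x\mapsto(a,x)$ from $X$ to $A\times X$, hence open. For $\eqref{cln2}\Rightarrow\eqref{cln1}$ I would fix $c>0$, take a point $(a_0,x_0)$ with $d(a_0,\kappa(x_0))>c$, set $\delta=d(a_0,\kappa(x_0))-c>0$ and $r=c+\delta/2$, so that $\kappa(x_0)\in\cb{a_0}{r}$; by $\eqref{cln2}$ the set $W=\kappa^{-1}(\cb{a_0}{r})$ is an open neighborhood of $x_0$ on which $d(a_0,\kappa(y))>r$. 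Using that for any subspace $P$ the function $b\mapsto d(b,P)$ is $1$-Lipschitz, one gets for every $b$ with $\norm{b-a_0}<\delta/2$ and every $y\in W$ that
\[
d(b,\kappa(y))\ \ge\ d(a_0,\kappa(y))-\norm{b-a_0}\ >\ r-\tfrac{\delta}{2}\ =\ c\;,
\]
so that $B_{\delta/2}(a_0)\times W$ is a neighborhood of $(a_0,x_0)$ contained in $\{(a,x)\st d(a,\kappa(x))>c\}$; this shows that this set is open for every $c>0$, i.e.\ the quotient norm is lower semicontinuous and the bundle is continuous.

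The two reformulations at the start are routine once one observes that $q$ is a quotient map, so that openness of subsets of $E$ may be tested in $A\times X$. The only step carrying real content is the product-neighborhood construction in $\eqref{cln2}\Rightarrow\eqref{cln1}$: closed-balls-continuity supplies uniform control of $d(a_0,\kappa(y))$ as $y$ ranges over a neighborhood of $x_0$, while the Lipschitz estimate for $d(\cdot,P)$ handles perturbation in the fibre direction, and the choice $r=c+\delta/2$ is what makes the two margins fit together. I do not expect a genuine obstacle here.
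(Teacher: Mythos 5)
Your proposal is correct and follows essentially the same route as the paper: the forward direction identifies $\kappa^{-1}(\cb a r)$ with the preimage of a norm-superlevel set (the paper writes this via $\hat a$, you via the slice $x\mapsto(a,x)$ composed with $q$), and the reverse direction uses exactly the same midpoint choice of radius together with the $1$-Lipschitz estimate for $b\mapsto d(b,\kappa(y))$, the only cosmetic difference being that you test lower semicontinuity upstairs in $A\times X$ via the quotient map while the paper exhibits the open neighborhood $q(B_\varepsilon(a)\times U)$ directly in $E$.
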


\begin{proof}
Let us prove $\eqref{cln1}\Rightarrow \eqref{cln2}$. Let $a\in A$ and $\varepsilon>0$.
The distance $d\bigl(a,\kappa(x)\bigr)$ equals $\norm{ q(a,x)}$, so we have
\begin{eqnarray*}
\kappa^{-1}\bigl(\cb a\varepsilon\bigr)&=&\bigl\{x\in X\st d(a,\kappa(x))>\varepsilon\bigr\}\\
&=&\bigl\{x\in X\st \norm{ \hat a(x)}>\varepsilon\bigr\}\\
&=&
{\hat a}^{-1}\bigl(\{e\in E \st \norm e>\varepsilon\}\bigr)\;,
\end{eqnarray*}
and thus $\kappa^{-1}\bigl(\cb a\varepsilon\bigr)$ is open due to the continuity of the norm on $E$ and the continuity of $\hat a$, and we conclude that $\kappa$ is continuous with respect to the closed balls topology.

Now let us prove $\eqref{cln2}\Rightarrow \eqref{cln1}$. We only need to check that the norm on $E$ is lower semicontinuous; that is,
given $a\in A$, $x\in X$, and $r>0$ such that $\norm{q(a,x)}>r$, we need to prove that for some neighborhood $W$ of $q(a,x)$ we have $\norm e>r$ for all $e\in W$, where $W$ can be taken to be $q(B_\varepsilon(a)\times U)$ for some $\varepsilon>0$ and some open neighborhood $U$ of $x$. Let $r_0=\norm{q(a,x)}$ and let $m=(r_0+r)/2$. Note that for all $y\in X$ the condition $\norm{q(a,y)}>m$ is equivalent to the statement that $y\in U$ for the open set defined by
\[
U=\kappa^{-1}(\cb a m)\;.
\]
In particular, $x\in U$. Let $\varepsilon=(r_0-r)/2$. Then for all $b\in B_\varepsilon(a)$ and all $y\in U$ we have
\begin{eqnarray*}
\norm{q(b,y)} &=& d(b,\kappa(y))\\
&\ge& d(a,\kappa(y)) - d(a,b)\\
&=&\norm{q(a,y)} - \norm{a-b}\\
&>&m-\varepsilon=r\;. \qedhere
\end{eqnarray*}
\end{proof}

\begin{corollary}\label{classbanach}
Let $A$ be a Banach space, $X$ a first countable Hausdorff space, and $(\pi:E\to X,A,q)$ a quotient vector bundle with kernel map $\kappa:X\to\Max A$. The following are equivalent:
\begin{enumerate}
\item\label{cb1} The quotient norm makes $\pi:E\to X$ is a Banach bundle;
\item\label{cb2} $\kappa$ is continuous with respect to the closed balls topology.
\end{enumerate}
\end{corollary}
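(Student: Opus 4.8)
The plan is to derive the corollary from the classification of continuous normed quotient vector bundles (Theorem~\ref{classcontnorm}) together with the comparisons between the closed balls topology, the Fell topology and closedness of the zero section; the implication $\eqref{cb1}\Rightarrow\eqref{cb2}$ is then immediate, while for $\eqref{cb2}\Rightarrow\eqref{cb1}$ the work consists in checking the defining axioms of a Banach bundle one by one. For $\eqref{cb1}\Rightarrow\eqref{cb2}$: if the quotient norm makes $\pi$ a Banach bundle, then the Banach bundle norm axiom says ${\norm~}:E\to\RR$ is continuous, hence lower semicontinuous; since it is automatically upper semicontinuous by Theorem~\ref{thm:normedqvb}, the bundle is continuous in the sense of Section~\ref{sec:banach}, and Theorem~\ref{classcontnorm} yields closed-balls-continuity of $\kappa$.

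For $\eqref{cb2}\Rightarrow\eqref{cb1}$ I would first collect the easy inputs. Since $A$ is a Banach space it is first countable and Hausdorff, so the hypotheses of the relevant earlier results are all in force. By Lemma~\ref{cbcontainsfell} closed-balls-continuity implies Fell-continuity of $\kappa$, and by Theorem~\ref{classcontnorm} the quotient norm on $E$ is continuous --- this is the Banach bundle norm axiom. By Corollary~\ref{fellcorollary} the total space $E$ is Hausdorff, and by Lemma~\ref{lem:subspaceeqquotient} the map $\pi$ is an open continuous surjection. Each fiber is, by Lemma~\ref{lem:subspaceeqquotient}(2), homeomorphic and linearly isomorphic to $A/\kappa(x)$, which, as the quotient of the complete space $A$ by a closed subspace, is a Banach space whose norm is precisely the restriction to $E_x$ of the quotient norm; this gives the fiberwise Banach space axiom. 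Continuity of addition on $E\times_X E$ and of scalar multiplication on $E$ hold because $\pi$ is a linear bundle by Theorem~\ref{lem:qlb0}.

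The step I expect to be the main obstacle is the remaining ``tube'' axiom: for each $x$ and each open $V\subset E$ with $0_x\in V$ there should be $\varepsilon>0$ and an open $U\ni x$ with $E_U\cap T_\varepsilon\subset V$, where $T_\varepsilon=\{e\in E\st \norm e<\varepsilon\}$. Here I would use that the sets $E_{\Gamma,W}=q(\Gamma\times W)$, with $\Gamma$ an open neighborhood of $0$ in $A$ and $W$ an open neighborhood of $x$ in $X$, form a local basis at $0_x$ (as noted in the proofs of Theorem~\ref{lem:qlb0} and Lemma~\ref{qlbreflection}). Given $V$, choose such $\Gamma$ and $W$ with $E_{\Gamma,W}\subset V$ and, using that $A$ is normed, a radius $\delta>0$ with $B_\delta(0)\subset\Gamma$, so that $E_{B_\delta(0),W}\subset V$. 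It then remains to verify that $E_W\cap T_\delta\subset E_{B_\delta(0),W}$: if $e=q(a,y)$ with $y\in W$ and $\norm e=d\bigl(a,\kappa(y)\bigr)<\delta$, then $\norm{a+p}<\delta$ for some $p\in\kappa(y)$, so $e=q(a+p,y)$ lies in $E_{B_\delta(0),W}$. Taking $U=W$ and $\varepsilon=\delta$ then settles the axiom, and since the fiber norms are the quotient norm throughout, the Banach bundle constructed carries the quotient norm. (Notably this last argument uses neither the continuity of the norm nor first countability; those enter only through the norm axiom and the Hausdorffness of $E$.)
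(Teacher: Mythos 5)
Your proof is correct and follows essentially the same route as the paper: the forward direction via Theorem~\ref{classcontnorm}, and the converse by combining Theorem~\ref{classcontnorm}, Lemma~\ref{cbcontainsfell}, Corollary~\ref{fellcorollary}, the identification of fibers with Banach quotients $A/\kappa(x)$, and the basis of sets $E_{\Gamma,U}$ for the tube axiom. The only difference is that you spell out the verification of axiom \eqref{banachtopbasis} explicitly, where the paper simply points to Lemma~\ref{qlbreflection}; your verification is accurate.
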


\begin{proof}
$\eqref{cb1}\Rightarrow \eqref{cb2}$ is an immediate consequence of Theorem~\ref{classcontnorm} because Banach bundles have continuous norm. Conversely, let us prove $\eqref{cb2}\Rightarrow \eqref{cb1}$. If \eqref{cb2} holds then the quotient norm is continuous, again by Theorem~\ref{classcontnorm}. In addition, $\kappa$ is Fell-continuous by Lemma~\ref{cbcontainsfell}, so by Corollary~\ref{fellcorollary} the space $E$ is Hausdorff. To conclude, the fibers $E_x$ are Banach spaces because they are isomorphic to quotients $A/\kappa(x)$ with $\kappa(x)\in\Max A$ and, by Lemma~\ref{qlbreflection}, the topology of $E$ around the image of the zero section is the required one (\cf\ axiom \ref{banachtopbasis} in the definition of section \ref{sec:banach}). \qedhere
\end{proof}

\section{Finite rank and local triviality}

Let $(\pi:E\to X,A,q)$ be a quotient vector bundle and $\kappa:X\to\Sub A$ its kernel map. We say the bundle has \emph{rank $n$} if all of its fibers $E_x$ have dimension $n$ or, equivalently, if all the subspaces $\kappa(x)$ have codimension $n$ in $A$. 
In this section we study such bundles. We shall only be interested in bundles whose fibers have the Euclidean topology, so we shall take $\kappa$ to be valued in $\Max A$, in addition assuming that $\kappa$ is Fell-continuous
because one of our aims is to study locally trivial bundles (\cf\ Corollary~\ref{loctrivimpliesfell}).
From here on we shall denote by \[\Max_n A\] the topological space, equipped with the relative Fell topology, whose points are the closed linear subspaces $P\in\Max A$ such that $\dim(A/P)=n$.

\paragraph{Fiber structures.}

Let $A$ be a Hausdorff vector space and let \[(\pi:E\to X,A,q)\] be a rank-$n$ quotient vector bundle with Fell-continuous kernel map \[\kappa:X\to\Max_n A\;.\] Given any $n$-dimensional subspace $V\subset A$ and any $x\in X$, both $V$ and $E_x$ have the Euclidean topology, so there is a linear homeomorphism $E_x\cong V$. This suggests that, in a suitable sense, $V$ can be regarded as being ``the fiber'' of the bundle.

In order to pursue this idea, first note that for each $P\in\ft V\cap\Max_n A$ we have $V\cong A/P$ and $P\cap V=\{0\}$, and therefore $A=V\oplus P$, where for each $a\in A$ the component of $a$ in $V$ is the vector $v$ such that $V\cap(a+P)=\{v\}$ (\cf\ Lemma~\ref{singleton}). This leads to the following definitions:

\begin{enumerate}
\item
The \emph{fiber domain of $V$ (with respect to $\kappa$)} is the open set $\fdom V\subset X$ defined by
\[
\fdom V = \kappa^{-1}\bigl(\ft V\bigr)\;.
\]
\item 
The \emph{projection family of $V$} is the mapping
\[\pf V: A\times \fdom V\to V\]
defined by the condition \[V\cap(a+\kappa(x))=\{\pf V(a,x)\}\;.\]
(The map $\pf V$ defines a family of projections $a\mapsto\pf V(a,x)$ of $A$ onto $V$ indexed by $x\in \fdom V$, hence the terminology --- \cf\ Lemma~\ref{singleton}.)

\item
The \emph{fiber family of $V$} is the mapping
\[
\ff V: E_{\fdom V}\to V
\]
defined by the condition
\[\ff V\circ q = \pf V\;.\]
(This is well defined because $\pf V(a,x)=\pf V(b,x)$ if $a-b\in \kappa(x)$, and it defines a family of isomorphisms $E_x\cong V$ indexed by $x\in\fdom V$.)
\end{enumerate}

\begin{lemma}\label{usedonlyonce}
Let $A$ be a Hausdorff vector space, let $(\pi:E\to X,A,q)$ be a rank-$n$ quotient vector bundle whose kernel map $\kappa:X\to\Max_n A$ is Fell-continuous, and let $V\subset A$ be an $n$-dimensional linear subspace. For all subsets $W\subset V$, all $a\in A$, and all $x\in \fdom V$, the following conditions are equivalent:
\begin{enumerate}
\item $\ff V(q(a,x))\in W$ (equiv., $\pf V(a,x)\in W$);
\item $\bigl(a+\kappa(x)\bigr)\cap(V\setminus W)=\emptyset$.
\end{enumerate}
\end{lemma}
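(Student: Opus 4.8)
The plan is to unwind the definitions of $\ff V$, $\pf V$, and $\fdom V$, and observe that the equivalence is essentially a restatement of what the single element $\pf V(a,x)$ is. Fix $V$ an $n$-dimensional subspace, $x\in\fdom V$ (so $\kappa(x)\in\ft V$, meaning $\kappa(x)\cap V=\{0\}$), and $a\in A$. Since $\dim(A/\kappa(x))=n=\dim V$ and $V\cap\kappa(x)=\{0\}$, we have the direct sum decomposition $A=V\oplus\kappa(x)$, so the coset $a+\kappa(x)$ meets $V$ in exactly one point, which by definition is $\pf V(a,x)$; and $\ff V(q(a,x))=\pf V(a,x)$ by the defining condition $\ff V\circ q=\pf V$. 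This justifies the parenthetical ``equiv.'' in condition~(1) and reduces the lemma to the set-theoretic equivalence, for the single point $v:=\pf V(a,x)\in V$ lying on the coset $a+\kappa(x)$: $v\in W$ if and only if $\bigl(a+\kappa(x)\bigr)\cap(V\setminus W)=\emptyset$.

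Next I would prove this elementary equivalence. The key observation is that $\bigl(a+\kappa(x)\bigr)\cap V=\{v\}$ is a \emph{singleton}. For the forward direction, suppose $v\in W$. If some $w\in\bigl(a+\kappa(x)\bigr)\cap(V\setminus W)$ existed, then $w\in\bigl(a+\kappa(x)\bigr)\cap V=\{v\}$, so $w=v$, contradicting $w\notin W$ while $v\in W$; hence the intersection is empty. For the converse, suppose $\bigl(a+\kappa(x)\bigr)\cap(V\setminus W)=\emptyset$. Since $v\in\bigl(a+\kappa(x)\bigr)\cap V$, $v$ cannot lie in $V\setminus W$, and as $v\in V$ this forces $v\in W$, i.e.\ $\pf V(a,x)\in W$, which is condition~(1).

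I do not anticipate a genuine obstacle here: the content is purely in correctly invoking the direct-sum decomposition $A=V\oplus\kappa(x)$ for $x\in\fdom V$ (so that ``$\,a+\kappa(x)$ meets $V$ in exactly one point'' is available), which the paper isolates as Lemma~\ref{singleton} — referenced in the definitions of $\pf V$ and $\ff V$. The only care needed is to write the argument symmetrically in $W$ and $V\setminus W$ so both implications come out of the single fact $|\bigl(a+\kappa(x)\bigr)\cap V|=1$; everything else is bookkeeping with the defining equations $\ff V\circ q=\pf V$ and $V\cap(a+\kappa(x))=\{\pf V(a,x)\}$.
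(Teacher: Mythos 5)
Your proof is correct and follows the same route as the paper, which simply declares the equivalence an immediate consequence of the definitions of $\ff V$ and $\pf V$; you have merely made explicit the singleton fact $V\cap\bigl(a+\kappa(x)\bigr)=\{\pf V(a,x)\}$ that underlies it.
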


\begin{proof}
Immediate consequence of the definitions of $\ff V$ and $\pf V$.
\qedhere
\end{proof}

\begin{lemma}\label{lemmaJP-VxUV->E-3}
Let $A$ be a Hausdorff vector space, and let
\[(\pi:E\to \Max_n A,A,q)\]
be the rank-$n$ quotient vector bundle whose kernel map is the identity on $\Max_n A$.
Let also $V\subset A$ be an $n$-dimensional linear subspace, let $W\subset V$ be an open subset (in the subspace topology of $V$),
and let $a\notin V$.
Fix some norm on $V\oplus\linspan{a}$, let
$r_{V\oplus\linspan{a}}: \bigl(V\oplus\linspan{a}\bigr)\setminus\{0\}\to S_{V\oplus\linspan{a}}$ be the retraction $r_{V\oplus\linspan{a}}(v)=v/\|v\|$ onto the unit sphere 
$S_{V\oplus\linspan{a}}$ and
let \[K_W=\overline{r_{V\oplus\linspan{a}}\bigl((V\setminus W)-a\bigr)}\;.\] Then $K_W$ is compact. Moreover, for all $P\in\ft V$ we have 
\begin{equation}\label{Wequiv}
\pf V(a,P)\in W\iff P\in\ft K_W\;.
\end{equation}
\end{lemma}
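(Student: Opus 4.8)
The plan is to first dispense with compactness of $K_W$ and then prove the equivalence \eqref{Wequiv}. For compactness, note that $V\setminus W$ is a closed subset of the finite-dimensional space $V$, hence $(V\setminus W)-a$ is a closed subset of $V\oplus\linspan a$ not containing $0$ (since $a\notin V$ forces $0\notin(V\setminus W)-a$, even when $0\in V\setminus W$, because $-a\notin V$). The retraction $r_{V\oplus\linspan a}$ is continuous on $\bigl(V\oplus\linspan a\bigr)\setminus\{0\}$ and sends everything into the unit sphere $S_{V\oplus\linspan a}$, which is compact because $V\oplus\linspan a$ is finite-dimensional Hausdorff hence carries the Euclidean topology. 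Thus $K_W$, being a closed subset of the compact sphere, is compact. (No image of a non-compact set is being claimed compact; compactness comes entirely from sitting inside the sphere and taking a closure.)

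For the equivalence \eqref{Wequiv}, I would unwind both sides in terms of the geometry of $A=V\oplus P$. By Lemma~\ref{usedonlyonce} applied to the universal bundle (whose kernel map is the identity, so $\kappa(P)=P$ and $\fdom V=\ft V$), the condition $\pf V(a,P)\in W$ is equivalent to $\bigl(a+P\bigr)\cap(V\setminus W)=\emptyset$. Translating by $-a$, this says $P\cap\bigl((V\setminus W)-a\bigr)=\emptyset$. Now $P$ is a linear subspace, so $P$ is invariant under the radial retraction in the sense that for any nonzero $w\in V\oplus\linspan a$, $w\in P$ iff $r_{V\oplus\linspan a}(w)\in P$; hence $P\cap\bigl((V\setminus W)-a\bigr)=\emptyset$ is equivalent to $P\cap r_{V\oplus\linspan a}\bigl((V\setminus W)-a\bigr)=\emptyset$. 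The remaining point is to pass from this set to its closure $K_W$: since $P\in\ft V$ we have $P\cap V=\{0\}$, and therefore $P\cap S_{V\oplus\linspan a}$ is a compact subset of $V\oplus\linspan a$ disjoint from $V$; one checks that $P\cap V=\{0\}$ together with the finite-dimensionality of $V\oplus\linspan a$ forces $P$ to be closed in $V\oplus\linspan a$, so $P$ is disjoint from a set iff it is disjoint from that set's closure. This yields $P\cap r_{V\oplus\linspan a}\bigl((V\setminus W)-a\bigr)=\emptyset \iff P\cap K_W=\emptyset \iff P\in\ft K_W$, completing the chain of equivalences.

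The main obstacle I anticipate is the bookkeeping around the closure: one must be careful that $P$ being disjoint from a set $S\subset V\oplus\linspan a$ is genuinely equivalent to $P$ being disjoint from $\overline S$, which is why the observation ``$P\cap V=\{0\}\Rightarrow P$ is closed in $V\oplus\linspan a$ and meets $S_{V\oplus\linspan a}$ in a compact set avoiding $V$'' is doing real work — a priori $P$ is merely a closed subspace of $A$, not obviously closed in the auxiliary finite-dimensional space, and without that one only gets one implication of \eqref{Wequiv}. A secondary subtlety is the edge case $0\in W$ versus $0\notin W$: when $0\in W$ one has $\pf V(a,P)$ possibly equal to $0$, but the argument above is uniform in $W$ and never needs to separate these cases, because everything is phrased via the set $(V\setminus W)-a$ which never contains $0$. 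Once these two points are handled the rest is a direct substitution of definitions.
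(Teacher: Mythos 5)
Your compactness argument and the chain of equivalences down to $P\cap r_{V\oplus\linspan{a}}\bigl((V\setminus W)-a\bigr)=\emptyset$ match the paper's proof and are fine: the reduction via Lemma~\ref{usedonlyonce}, the translation by $-a$, and the observation that a linear subspace meets a set of nonzero vectors iff it meets its radial image are all correct. The gap is in the very last step, where you pass from $r_{V\oplus\linspan{a}}\bigl((V\setminus W)-a\bigr)$ to its closure $K_W$. You justify this by saying that $P$ is closed in $V\oplus\linspan{a}$, ``so $P$ is disjoint from a set iff it is disjoint from that set's closure.'' That principle is false: a closed set can be disjoint from $S$ and still meet $\overline S$ (in $\RR^2$ take $P$ the $y$-axis and $S=\{(1/k,1)\st k\geq 1\}$; then $P\cap S=\emptyset$ but $(0,1)\in P\cap\overline S$). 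What would make such an argument work is openness of the complement of $S$'s ambient obstruction, i.e.\ you would need $A\setminus P$ to be \emph{closed}, which it is not. And the closure genuinely adds points in this lemma: already for $n=1$ and $W=\emptyset$ the radial image of the affine line $V-a$ is an open half of the unit sphere of $V\oplus\linspan{a}$, and its closure adds the equatorial points lying in $V$. So the implication ($P$ disjoint from $r_{V\oplus\linspan{a}}\bigl((V\setminus W)-a\bigr)$ $\Rightarrow$ $P\in\ft K_W$), which is the substantive direction of \eqref{Wequiv}, is left unproved.

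The paper closes this gap by identifying exactly which points the closure adds. If $w\in K_W$ is the limit of $r_{V\oplus\linspan{a}}(v_k)$ with $v_k\in(V\setminus W)-a$, then either $(v_k)$ has a bounded subsequence, in which case (since $(V\setminus W)-a$ is closed in the finite-dimensional space and avoids $0$) the limit already lies in $r_{V\oplus\linspan{a}}\bigl((V\setminus W)-a\bigr)$; or $\norm{v_k}\to\infty$ along a subsequence, in which case $(v_k+a)/\norm{v_k+a}$ converges to the same $w$, and since $v_k+a\in V$ this forces $w\in V\cap S_{V\oplus\linspan{a}}$. Hence $K_W\subset r_{V\oplus\linspan{a}}\bigl((V\setminus W)-a\bigr)\cup\bigl(V\cap S_{V\oplus\linspan{a}}\bigr)$, and the second piece is disjoint from $P$ precisely because $P\in\ft V$ and $0\notin S_{V\oplus\linspan{a}}$. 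Your remark that $P\cap S_{V\oplus\linspan{a}}$ avoids $V$ is the right ingredient, but it must be combined with this explicit description of $K_W\setminus r_{V\oplus\linspan{a}}\bigl((V\setminus W)-a\bigr)$ rather than with a general closed-sets-versus-closures principle.
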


\begin{remark*}
Since $S_{V\oplus\linspan{a}}$ is closed in $A$, the closure of $r_{V\oplus\linspan{a}}\bigl((V\setminus W)-a\bigr)$ is the same in $A$
or in $S_{V\oplus\linspan{a}}$.
\end{remark*}

\begin{proof}
In order to show that $K_W$ is compact notice that 
$S_{V\oplus\linspan{a}}$ 
is compact because $V\oplus\linspan{a}$ is finite dimensional. 
Since $K_W\subset S_{V\oplus\linspan{a}}$ is closed, $K_W$ is compact. Now let $P\in\ft V$.
The condition $P\in \ft K_W$ is, by definition, equivalent to
\begin{equation}\label{eqrW0}
P\cap K_W=\emptyset\;,
\end{equation}
which implies
\begin{equation}\label{eqrW1}
P\cap r_{V\oplus\linspan{a}}\bigl((V\setminus W)-a\bigr)=\emptyset\;,
\end{equation}
and, equivalently,
\begin{equation}\label{eqrW2}
P\cap \bigl((V\setminus W)-a\bigr)=\emptyset\;,
\end{equation}
which in turn is equivalent, by Lemma~\ref{usedonlyonce}, to
\begin{equation}\label{eqrW3}
\pf V(a,P)\in W\;.
\end{equation}
So we have proved the $\Leftarrow$ implication of \eqref{Wequiv}, and also concluded that the conditions \eqref{eqrW1}--\eqref{eqrW3} are all equivalent. Hence, in order to prove the $\Rightarrow$ implication of \eqref{Wequiv} we only need to show that \eqref{eqrW1} implies \eqref{eqrW0}. Suppose the former holds, and let $w\in K_W$. Then, since $V\oplus\linspan{a}$ 
is normed, there is a sequence $(v_k)$ in $(V\setminus W)-a$ such that $\lim v_k/\|v_k\|=w$. If
$(v_k)$ is not bounded we also have $\lim(v_k+a)/\|v_k+a\|=w$. In this case, since the sequence $(v_k+a)$ has values in $V$, we obtain $w\in V$, which implies that $w\notin P$ because $P\in \ft V$, so \eqref{eqrW0} holds. Now assume that $(v_k)$ is bounded. Then, since $V$ is finite dimensional and $V\setminus W-a$ is closed, there is a convergent subsequence $(w_k)$ such that $\lim w_k\in V\setminus W-a$, and thus $w=r_{V\oplus\linspan{a}}(\lim w_k)\in r_{V\oplus\linspan{a}}\bigl((V\setminus W)-a\bigr)$. Therefore $w\notin P$ and again we conclude that \eqref{eqrW0} holds.
 \qedhere
\end{proof}

\begin{theorem}
If $A$ is a Hausdorff vector space then $\Max_n A$, with the Fell topology, is a Hausdorff space.
\end{theorem}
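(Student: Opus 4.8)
The plan is to separate two distinct points $P,Q\in\Max_n A$ by transporting a Hausdorff separation from a finite dimensional subspace up to $\Max_n A$ through the projection family of the universal bundle. First I would note that since $\dim(A/P)=\dim(A/Q)=n$ and $P\neq Q$, neither of the two subspaces can contain the other: an inclusion $P\subseteq Q$ of codimension-$n$ subspaces induces a surjection $A/P\to A/Q$ between $n$-dimensional spaces, which must be an isomorphism, forcing $P=Q$. Hence I may pick a vector $a\in P\setminus Q$, and note $a\neq 0$ since $0\in Q$.

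Next I would produce a common algebraic complement: an $n$-dimensional subspace $V\subset A$ with $A=P\oplus V=Q\oplus V$. This is an induction of length $n$ using the elementary fact that a complex vector space is never the union of two proper subspaces. If $P_k:=P+\linspan{v_1,\ldots,v_k}$ and $Q_k:=Q+\linspan{v_1,\ldots,v_k}$ have already been built, both of codimension $n-k$, and $k<n$, then $P_k$ and $Q_k$ are proper, so $P_k\cup Q_k\neq A$ and one may choose $v_{k+1}\notin P_k\cup Q_k$, which lowers both codimensions by one; after $n$ steps $V:=\linspan{v_1,\ldots,v_n}$ satisfies $P+V=A=Q+V$, the $v_i$ are automatically independent (so $\dim V=n$), and the dimension counts force $P\cap V=Q\cap V=\{0\}$. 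Being finite dimensional in a Hausdorff vector space, $V$ is closed, so $P,Q\in\ft V$, and by Lemma~\ref{lemmaJP-UVopen} the set $\ft V$ is open in the Fell topology.

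Then I would evaluate the projection family $\pf V$ of the universal rank-$n$ bundle over $\Max_n A$ (whose kernel map is $\ident$, so that $\fdom V=\ft V$) at the vector $a$. Since $a\in P$ we have $0\in V\cap(a+P)$, so $\pf V(a,P)=0$; since $a\notin Q$, the vector $v:=\pf V(a,Q)$ is nonzero. Moreover $a\notin V$, because $a\in P\setminus\{0\}$ while $V\cap P=\{0\}$, so Lemma~\ref{lemmaJP-VxUV->E-3} applies and gives, for each open $W\subset V$, a compact set $K_W\subset A$ with $\{P'\in\ft V\st \pf V(a,P')\in W\}=\ft V\cap\ft K_W$. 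Choosing disjoint open sets $W_0\ni 0$ and $W_1\ni v$ in the (Euclidean, hence Hausdorff) space $V$, the sets $\ft V\cap\ft K_{W_0}$ and $\ft V\cap\ft K_{W_1}$ are Fell-open, contain $P$ and $Q$ respectively, and are disjoint, since a common point would have its $\pf V(a,-)$-value lying in $W_0\cap W_1=\emptyset$. This establishes the Hausdorff property.

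The only non-routine ingredient is the existence of the common complement $V$ and the verification that $P,Q\in\ft V$; once this is in hand the argument is just bookkeeping with Lemmas~\ref{lemmaJP-UVopen} and~\ref{lemmaJP-VxUV->E-3}. The secondary point to keep in mind is that $\pf V(a,-)$ behaves like a continuous map $\ft V\to V$ precisely in the form supplied by Lemma~\ref{lemmaJP-VxUV->E-3}, which is exactly what allows the Hausdorff separation in $V$ to be pulled back to $\Max_n A$.
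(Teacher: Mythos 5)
Your proof is correct and follows essentially the same route as the paper's: a common $n$-dimensional complement $V$ of $P$ and $Q$, a vector $a\in P\setminus Q$, and Lemma~\ref{lemmaJP-VxUV->E-3} to turn $\pf V(a,-)$ into a Fell-continuous separation into the Hausdorff space $V$. The only departure is that you build the common complement by a direct induction (using that a complex vector space is not a union of two proper subspaces) instead of invoking the paper's Lemma~\ref{lemmaJP-linearalgebra}; both arguments are fine.
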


\begin{proof}
Let $P,Q\in\Max_n A$ be such that $P\neq Q$. By 
Lemma~\ref{lemmaJP-linearalgebra}
there is an $n$-dimensional subspace $V\subset A$ such that
$V\cap P=V\cap Q=\{0\}$.
Let $(\pi:E\to \Max_n A,A,q)$ be the rank-$n$ quotient vector bundle whose kernel map is the identity on $\Max_n A$.
Let $a\in P\setminus Q$. Then $\hat a(P)=0$ and $\hat a(Q)\neq 0$.
Let $s: \fdom V\to V$ be defined by $s=\ff V\circ \hat a$.
Then $s(P)=0$  and $s(Q)\neq 0$, and, since $V$ is Hausdorff, we only need to show that $s$ is continuous.
But this follows from Lemma~\ref{lemmaJP-VxUV->E-3} because for all $R\in\fdom V$ we have $s(R)=\pf V\bigl(a,R\bigr)$, and thus for each open set $W$ of $V$ there is a compact set $K_W\subset A$ such that the preimage $s^{-1}(W)$ equals $\ft K_W$.  \qedhere
\end{proof}

\paragraph{Local triviality.} Let $A$ be a Hausdorff vector space, and let \[(\pi:E\to X,A,q)\] be a rank-$n$ quotient vector bundle with Fell-continuous kernel map \[\kappa:X\to\Max_n A\;.\] Let also $V\subset A$ be an $n$-dimensional linear subspace. The restriction of $q$ to $A\times\fdom V$ is an open map because the fiber domain $\fdom V$ is open, and therefore the projection family $\pf V$ is continuous if and only if the fiber family $\ff V$ is. The continuity of these maps is closely related to the existence of local trivializations, as we now see:

\begin{lemma}\label{lemmaJP-VxUV->E}
With the bundle $(\pi,A,q)$ and $V$ as just defined, let
\[q': V\times \fdom V\to E_{\fdom V}\]
be the restriction of $q$ to $V\times \fdom V$. Then $q'$ is continuous and bijective, and the following conditions are equivalent:
\begin{enumerate}
\item $q'$ is an isomorphism of bundles in $\LBun(X)$;
\item $\ff V$ is continuous (equiv., $\pf V$ is continuous).
\end{enumerate}
\end{lemma}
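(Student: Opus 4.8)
The plan is to dispose of the easy structural claims first and then observe that the set-theoretic inverse of $q'$ is forced to be the pair map $e\mapsto\bigl(\ff V(e),\pi(e)\bigr)$, after which both implications of the equivalence become immediate. First, $q'$ is continuous because it is the restriction of the continuous map $q$ to the subspace $V\times\fdom V$, and it is a morphism of bundles over $\fdom V$ since $\pi\circ q'$ is the second projection and each $q'_x:V\to E_x$ is linear. For bijectivity I would argue fiberwise: fixing $x\in\fdom V$ we have $\kappa(x)\in\ft V$, i.e.\ $\kappa(x)\cap V=\{0\}$, so $q'_x:V\to E_x$ is injective, because $q(v,x)=q(v',x)$ forces $v-v'\in\kappa(x)\cap V=\{0\}$; since $\dim V=n=\dim E_x$ it is then a linear isomorphism. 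Hence $q'$ is a fiberwise linear bijection.

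Next I would identify the inverse. Using the defining properties $V\cap\bigl(a+\kappa(x)\bigr)=\{\pf V(a,x)\}$ and $\ff V\circ q=\pf V$, one checks that the map $E_{\fdom V}\to V\times\fdom V$ sending $e$ to $\bigl(\ff V(e),\pi(e)\bigr)$ is a two-sided inverse of $q'$: writing $e=q(a,x)$ with $x\in\fdom V$, the vector $v:=\pf V(a,x)$ lies in $V$ and satisfies $v-a\in\kappa(x)$, so $q'(v,x)=q(v,x)=q(a,x)=e$; and conversely, for $v\in V$ we have $v\in V\cap(v+\kappa(x))$, hence $\ff V(q(v,x))=\pf V(v,x)=v$. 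Thus $(q')^{-1}=\bigl(\ff V,\pi\bigr)$ as maps.

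Now the equivalence follows formally. If $q'$ is an isomorphism in $\LBun$, then $(q')^{-1}$ is continuous, and postcomposing it with the first projection $V\times\fdom V\to V$ exhibits $\ff V$ as continuous, giving $(1)\Rightarrow(2)$. Conversely, if $\ff V$ is continuous then $(q')^{-1}=\bigl(\ff V,\pi\bigr)$ is continuous (as $\pi$ is), so $q'$ is a homeomorphism; combined with the fiberwise linear bijectivity already established, this makes $q'$ an isomorphism in $\LBun$, giving $(2)\Rightarrow(1)$. The parenthetical equivalence ``$\ff V$ continuous $\iff$ $\pf V$ continuous'' is exactly the remark recorded just before the statement, since the restriction of $q$ to $A\times\fdom V$ is open and $\ff V\circ q=\pf V$.

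I do not anticipate a serious obstacle: the only point that requires care is the middle step, where the precise definitions of $\pf V$ and $\ff V$ must be used to confirm that $(q')^{-1}$ coincides with the pair map $\bigl(\ff V,\pi\bigr)$; everything else is routine.
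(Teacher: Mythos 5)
Your proposal is correct and follows essentially the same route as the paper: continuity of $q'$ by restriction, fiberwise bijectivity from $\kappa(x)\cap V=\{0\}$ together with the dimension count (the paper invokes Lemma~\ref{lemmaJP-linindep} for this), identification of the set-theoretic inverse as the pairing $\langle \ff V,\pi\rangle$, and the equivalence read off from that. Your write-up simply makes explicit the verifications the paper leaves implicit.
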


\begin{proof}
The function $q'$ is clearly continuous and it is bijective by Lemma~\ref{lemmaJP-linindep}. Its inverse is the pairing $\langle \ff V,\pi_{\fdom V}\rangle$, and thus $q'$ is a homeomorphism if and only if $\ff V$ is continuous.
\qedhere
\end{proof}

\begin{theorem}\label{fVcontsuffices}
Let $A$ be a Hausdorff vector space and let \[(\pi:E\to X,A,q)\] be a rank-$n$ quotient vector bundle with Fell-continuous kernel map \[\kappa:X\to\Max_n A\;.\]
The following conditions are equivalent:
\begin{enumerate}
\item\label{fV1} $\pi$ is locally trivial;
\item\label{fV2} For all $n$-dimensional subspaces $V\subset A$ the fiber family $\ff V$ (equiv., the projection family $\pf V$) is continuous.
\end{enumerate}
\end{theorem}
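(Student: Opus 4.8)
The plan is to prove the two implications separately, using the fiber-family machinery that has just been set up. The implication $\eqref{fV2}\Rightarrow\eqref{fV1}$ is the easy direction and essentially immediate: fix $x\in X$ and choose, using Lemma~\ref{lemmaJP-linearalgebra} together with the fact that $\dim(A/\kappa(x))=n$, an $n$-dimensional subspace $V\subset A$ with $V\cap\kappa(x)=\{0\}$, so that $x\in\fdom V$. Since $\fdom V$ is open and $\ff V$ is continuous by hypothesis, Lemma~\ref{lemmaJP-VxUV->E} tells us that $q':V\times\fdom V\to E_{\fdom V}$ is an isomorphism in $\LBun(\fdom V)$; composing with a linear homeomorphism $E_x\cong V$ (both carry the Euclidean topology since $A$ is Hausdorff) gives a trivialization of $\pi$ over the neighborhood $\fdom V$ of $x$. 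Hence $\pi$ is locally trivial.

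For $\eqref{fV1}\Rightarrow\eqref{fV2}$, suppose $\pi$ is locally trivial and fix an $n$-dimensional $V\subset A$; I want to show $\ff V:E_{\fdom V}\to V$ is continuous. Continuity is a local question, so it suffices to show that each $e_0=q(a,x_0)\in E_{\fdom V}$ has a neighborhood on which $\ff V$ is continuous. Pick a neighborhood $U\ni x_0$ over which $\pi$ restricts to a trivial bundle, say via an isomorphism $g:E_x\times U\to E_U$ in $\LBun(U)$; shrinking $U$ if necessary we may assume $U\subset\fdom V$. Then on $E_U\cap E_{\fdom V}=E_U$, the map $\ff V$ is the composite of the homeomorphism $g^{-1}:E_U\to E_{x_0}\times U$ with the map $(v,x)\mapsto\ff V(g(v,x))$, so it is enough to check that for each fixed $v\in E_{x_0}$ the section $x\mapsto g(v,x)$ of $\pi$ over $U$ lands in $E_{\fdom V}$ (clear) and that $\ff V$ composed with any continuous section is continuous $U\to V$. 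Concretely, writing an arbitrary continuous section $s$ of $\pi$ over $U$ as $s=\hat b$ for suitable $b\in A$ would reduce us to showing $x\mapsto\pf V(b,x)$ is continuous, which is exactly the content of Lemma~\ref{lemmaJP-VxUV->E-3}: the preimage under this map of an open $W\subset V$ is $\ft K_W$ for a compact $K_W$, hence open by Fell-continuity of $\kappa$.

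The main obstacle is that a general continuous section of $\pi$ over $U$ need not be of the form $\hat b$; only the bundle radical being trivial guarantees that, and here sections come from the local trivialization, not from $A$. So the real work is to argue that it suffices to test continuity of $\ff V$ against sections pulled back from $A$ — equivalently, to produce, for each point $e_0=q(a,x_0)$, enough sections of the form $\hat b$ passing near $e_0$ to recover a neighborhood basis. This is handled as follows: since $\pi$ is a quotient bundle, the sets $E_{\Gamma,U}=\bigcup_{b\in\Gamma}\hat b(U)$ form a basis for the topology of $E$ (Lemma~\ref{qlbreflection}), so every $e_0$ has arbitrarily small neighborhoods of the form $q(\Gamma\times U)$ with $\Gamma\subset A$ open and $U\subset\fdom V$; and $\ff V$ restricted to such a set is, by Lemma~\ref{usedonlyonce}, governed fiberwise by the conditions $\pf V(b,x)\in W$ for $b\in\Gamma$. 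Thus $\ff V$ is continuous at $e_0$ provided each map $x\mapsto\pf V(b,x)$, $b\in\Gamma$, is continuous on $U$ — and local triviality is not even needed at this last step; what local triviality buys us, via Lemma~\ref{lemmaJP-VxUV->E}, is merely that the continuity of $\ff V$ is equivalent to $q'$ being an isomorphism, so that once we know $\pi$ is locally trivial we know $q'$ is a homeomorphism over the trivializing neighborhood, which forces $\ff V$ to be continuous there by inverting $q'$. Reconciling these two viewpoints — the direct Lemma~\ref{lemmaJP-VxUV->E-3} computation versus the abstract inversion of $q'$ — is the only delicate bookkeeping; both routes close the argument, and I would present the inversion-of-$q'$ route as the cleanest, invoking Lemma~\ref{lemmaJP-VxUV->E} directly on the trivializing neighborhood and then noting that continuity of $\ff V$ globalizes since $X$ is covered by such neighborhoods.
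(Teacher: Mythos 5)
Your proof of \eqref{fV2}$\Rightarrow$\eqref{fV1} is correct and coincides with the paper's. The direction \eqref{fV1}$\Rightarrow$\eqref{fV2} is where the gap lies, and neither of your two routes closes it. The first route rests on the claim that $\ff V$ is continuous at $e_0=q(a,x_0)$ \emph{provided} each map $x\mapsto\pf V(b,x)$, $b\in\Gamma$, is continuous. This confuses separate with joint continuity: to control $\ff V$ on a basic neighborhood you need a single pair $(\Gamma,U)$ with $\pf V(\Gamma\times U)$ inside a prescribed open $W\subset V$, whereas continuity of each $x\mapsto\pf V(b,x)$ only gives a neighborhood $U_b$ of $x_0$ for each $b$ separately, with no uniformity over $\Gamma$. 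Worse, that separate continuity holds for free: Lemma~\ref{lemmaJP-VxUV->E-3} together with Fell-continuity of $\kappa$ already shows that $x\mapsto\pf V(b,x)$ is continuous on $\fdom V$ for every fixed $b$ (this is exactly how the paper proves that $\Max_n A$ is Hausdorff), so if your claim were valid then every Fell-continuous rank-$n$ bundle would satisfy \eqref{fV2}, making Fell-continuity alone sufficient for local triviality and emptying Theorems~\ref{thmJP-loctriviff} and~\ref{normedloctriv} of content.

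The second route (``invoke Lemma~\ref{lemmaJP-VxUV->E} directly on the trivializing neighborhood'') is circular: that lemma only asserts the \emph{equivalence} of ``$q'$ is an isomorphism'' with ``$\ff V$ is continuous''; it does not say that the existence of \emph{some} trivialization $\phi:E_U\to V\times U$ forces the particular continuous bijection $q'':V\times U\to E_U$ to be one. Establishing that implication is the actual content of this direction, and the paper does it as follows: the composite $\phi\circ q'':V\times U\to V\times U$ is a continuous, bijective, fiberwise-linear self-map of a trivial bundle with finite-dimensional fiber, and any such map is automatically an isomorphism in $\LBun(U)$ --- fiberwise it is an invertible matrix $M(x)$ whose entries depend continuously on $x$ (its columns come from the continuous maps $x\mapsto\phi(q(e_i,x))$ for a basis $e_1,\dots,e_n$ of $V$), and $x\mapsto M(x)^{-1}$ is continuous because inversion is continuous on $\GL(n,\CC)$. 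Hence $q''$ is a homeomorphism, $q'$ is a homeomorphism because the sets $V\times U$ cover $V\times\fdom V$, and only then does Lemma~\ref{lemmaJP-VxUV->E} deliver the continuity of $\ff V$. This finite-dimensional ``automatic continuity of the inverse'' step is the crux of \eqref{fV1}$\Rightarrow$\eqref{fV2} and is missing from your proposal.
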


\begin{proof}
The implication $\eqref{fV2}\Rightarrow \eqref{fV1}$ is a consequence of Lemma~\ref{lemmaJP-VxUV->E}: for each $x\in X$ choose an $n$-dimensional subspace $V\subset A$ such that $V\cap\kappa(x)=\{0\}$; the fiber domain $\fdom V$ is an open neighborhood of $x$ and the continuity of $\ff V$ implies that there is a local trivialization $(q')^{-1}:E_{\fdom V}\to V\times\fdom V$, where $q'$ is the restriction of $q$ to $V\times\fdom V$.

Now let us prove the implication $\eqref{fV1}\Rightarrow \eqref{fV2}$. Suppose $\pi$ is locally trivial and let $V\subset A$ be an $n$-dimensional subspace. Let $x\in \fdom V$, let $q':V\times\fdom V\to E_{\fdom V}$ be the restriction of $q$ to $V\times\fdom V$, and let $\phi: E_U\to V\times U$
be a trivialization on some open neighborhood $U$ of $x$. We may assume that $U\subset \fdom V$, and thus obtain a continuous and bijective function $\phi\circ q'': V\times U\to V\times U$,
which is a morphism in $\LBun(U)$, where $q''$ is the restriction of $q'$ to $V\times U$.
The function $\phi\circ q''$ is necessarily a bundle isomorphism because it restricts fiberwise to continuous linear isomorphisms $\phi_x:V\times \{x\}\to V\times \{x\}$, which are necessarily homeomorphisms because $V$ has the Euclidean topology, and thus $q''$ is a homeomorphism. Since $q''$ is a generic restriction of $q'$ and the domains of all such maps $q''$ cover the domain of $q'$, we conclude that $q'$ is itself a homeomorphism. So, by Lemma~\ref{lemmaJP-VxUV->E}, $\ff V$ is continuous.  \qedhere
\end{proof}

\paragraph{Two technical lemmas.}

In the following two lemmas $A$ is an arbitrary Hausdorff vector space, $(\pi:E\to X,A,q)$ is a rank-$n$ quotient vector bundle with Fell-continuous kernel map $\kappa:X\to\Max_n A$, and $V\subset A$ is an $n$-dimensional subspace. 

\begin{lemma}\label{lemmaJP:fvcontiffxinint}
Let $\mathcal B$ be a basis of neighborhoods of the origin in the topology of $V$. Then
the map $\ff V: E_{\fdom V}\to V$ is continuous if and only if
for any $W\in\mathcal B$ and for any $x\in \fdom V$
there is a neighborhood $U\subset A$ of zero in the topology of $A$ 
such that $x$ is in the open set
\[
N_{W,U}=\interior\bigl\{y\in \fdom V\st \kappa(y)\cap\bigl((V\setminus W)+U\bigr)=\emptyset\bigr\}\;.
\]
\end{lemma}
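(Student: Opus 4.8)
The plan is to characterize continuity of $\ff V$ directly in terms of preimages of a neighborhood basis of the origin, translating ``$\ff V$ continuous'' into a statement about the quotient map $q$ and the kernel map $\kappa$. Recall from Lemma~\ref{lemmaJP-VxUV->E} that, since $q$ restricted to $A\times\fdom V$ is open, $\ff V$ is continuous if and only if $\pf V: A\times\fdom V\to V$ is continuous. Because $\pf V$ is linear in the first variable and $V$ has the Euclidean topology, continuity of $\pf V$ is equivalent to continuity at every point of the form $(0,x)$; and since $\ff V\circ q=\pf V$ with $q$ open, one can phrase everything as: for each basic neighborhood $W$ of $0$ in $V$ and each $x\in\fdom V$, the set $\pf V^{-1}(W)$ contains a neighborhood of $(0,x)$ of product form $U'\times U''$ with $U'$ a neighborhood of $0$ in $A$ and $U''\subset\fdom V$ a neighborhood of $x$.

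**Key translation step.** The heart of the argument is to rewrite the condition ``$U'\times U''\subset\pf V^{-1}(W)$'' using Lemma~\ref{usedonlyonce}: for $a\in A$ and $y\in\fdom V$ we have $\pf V(a,y)\in W$ if and only if $\bigl(a+\kappa(y)\bigr)\cap(V\setminus W)=\emptyset$, equivalently $a\notin (V\setminus W)-\kappa(y)$, equivalently $a\notin(V\setminus W)+\kappa(y)$ (as $\kappa(y)$ is a subspace). Hence asking that $U'\times U''$ lie inside $\pf V^{-1}(W)$ says precisely that for every $y\in U''$ and every $a\in U'$ we have $a\notin (V\setminus W)+\kappa(y)$, i.e. $U'\cap\bigl((V\setminus W)+\kappa(y)\bigr)=\emptyset$ for all $y\in U''$. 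Now I would symmetrize: replacing $U'$ by a smaller symmetric neighborhood if necessary, the condition $U'\cap\bigl((V\setminus W)+\kappa(y)\bigr)=\emptyset$ is equivalent to $\kappa(y)\cap\bigl((V\setminus W)+U'\bigr)=\emptyset$ (using $U'=-U'$). Therefore $U''$ being contained in $\{y\in\fdom V\st \kappa(y)\cap\bigl((V\setminus W)+U'\bigr)=\emptyset\}$ and being a neighborhood of $x$ is exactly the statement that $x\in N_{W,U'}$ for this choice of $U'$. Running this equivalence in both directions and quantifying over the basis $\mathcal B$ gives the claim.

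**Carrying it out.** Concretely, I would proceed as follows. First, for the ($\Rightarrow$) direction: assume $\ff V$, hence $\pf V$, is continuous. Fix $W\in\mathcal B$ and $x\in\fdom V$. Since $W$ is open in $V$ and $\pf V(0,x)=0\in W$, continuity of $\pf V$ gives a neighborhood $U_0$ of $0$ in $A$ and a neighborhood $U_1\subset\fdom V$ of $x$ with $\pf V(U_0\times U_1)\subset W$. Shrink $U_0$ to a symmetric (balanced suffices) neighborhood $U$ of $0$. By the translation step above, $U_1\subset\{y\in\fdom V\st\kappa(y)\cap((V\setminus W)+U)=\emptyset\}$, and since $U_1$ is open and contains $x$, we get $x\in N_{W,U}$. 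For the ($\Leftarrow$) direction: assume the stated condition holds. To prove $\pf V$ continuous it suffices, by linearity in the first variable and the Euclidean topology on $V$, to prove continuity at each $(0,x)$; so fix a basic neighborhood $W\in\mathcal B$ of $0$ in $V$, choose a symmetric neighborhood $W'\in\mathcal B$ with $W'+W'\subset W$ if needed (to absorb the first-variable translation — this is where I must be a little careful), apply the hypothesis to get $U$ with $x\in N_{W',U}$, and then check that $U\times N_{W',U}$ maps into $W$ under $\pf V$: for $a\in U$ and $y\in N_{W',U}$ we have $\kappa(y)\cap((V\setminus W')+U)=\emptyset$, so $a\notin (V\setminus W')+\kappa(y)$, hence by Lemma~\ref{usedonlyonce} $\pf V(a,y)\in W'\subset W$.

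**Main obstacle.** The delicate point is handling the first variable: $\pf V$ is not a function of $y$ alone, so I must make sure the neighborhood $U$ of $0$ in $A$ appearing in $N_{W,U}$ simultaneously controls the displacement $a$ and interacts correctly with the ``$V\setminus W$'' set. The symmetrization $U=-U$ and the minor inflation of $W$ to $W'$ with $W'+W'\subset W$ are exactly the devices that make the two-sided translation go through; I expect that matching these choices so that the equivalence is genuinely biconditional (rather than just an implication in one direction) is the only real subtlety, everything else being a direct unwinding of the definitions of $\fdom V$, $\pf V$, $\ff V$ together with Lemma~\ref{usedonlyonce} and Lemma~\ref{lemmaJP-VxUV->E}.
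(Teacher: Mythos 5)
Your forward direction ($\ff V$ continuous $\Rightarrow$ the $N_{W,U}$ condition) is correct and is essentially the paper's argument: apply continuity of $\pf V$ at $(0,x)$ to get a product neighborhood, symmetrize $U$, and unwind via Lemma~\ref{usedonlyonce}. The gap is in the converse. You reduce to proving continuity of $\pf V$ at the points $(0,x)$ and justify this reduction by ``linearity in the first variable and the Euclidean topology on $V$''. That justification does not work: $\pf V$ is a $y$-dependent family of linear projections, and at a general point $(a,x)$ with $v=\pf V(a,x)$ the decomposition $\pf V(a+h,y)=\pf V(a,y)+\pf V(h,y)$ leaves the term $\pf V(a,y)$, whose continuity in $y$ at fixed $a\notin V$ is precisely part of what is being proved; linearity in the first slot gives no control over it, and your proposed inflation $W'+W'\subset W$ only absorbs an error term that you have not actually bounded. (The Euclidean topology on $V$ plays no role in this step.)

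The missing idea --- which is exactly how the paper closes this step --- is to verify continuity of $\ff V$ (rather than of $\pf V$) at each point $e\in E_{\fdom V}$ using the privileged representative $(v,x)$ with $v=\ff V(e)\in V$: since $q(v,x)=e$ and $q$ is open, the sets $q\bigl((v-U)\times N\bigr)$ are neighborhoods of $e$, and for $b\in v-U$ one has $\pf V(b,y)=v+\pf V(b-v,y)$, where the first term is \emph{constant in $y$} because $v\in V$ forces $\pf V(v,y)=v$ for every $y\in\fdom V$. Then $b-v\in -U$ together with $\kappa(y)\cap\bigl((V\setminus W)+U\bigr)=\emptyset$ gives $\pf V(b-v,y)\in W$ directly, so $\ff V$ maps $q\bigl((v-U)\times N_{W,U}\bigr)$ into $v+W$, with no need for $W'+W'\subset W$ and no uncontrolled term. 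Once you insert this representative-choice step your argument goes through; the remaining point about replacing the $U$ supplied by the hypothesis with a symmetric neighborhood is harmless, since shrinking $U$ only enlarges $N_{W,U}$.
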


\begin{proof}
First, assuming the hypothesis about the sets $N_{W,U}$, we show that $\ff V$ is continuous.
Let $[a,x]=q(a,x)\in E_{\fdom V}$ and let $v=\ff V([a,x])$. The collection
$\{v+W\st W\in\mathcal B\}$ is a basis of neighborhoods of $v$.
In order to prove continuity of $\ff V$ we shall, given an arbitrary but fixed $W\in\mathcal B$, find an open set $\mathcal U\subset E_{\fdom V}$ containing $[a,x]$ such that
\begin{equation}\label{fvcontinuity}
\ff V(\mathcal U)\subset v+W\;.
\end{equation}
By hypothesis there is a neighborhood
of zero $U\subset A$ such that $x\in N_{W,U}$, and we define $\mathcal U=q((v-U)\times N_{W,U})$. So let us prove \eqref{fvcontinuity}. Let
$y\in N_{W,U}$ and $b\in v-U$. Then 
$\bigl(\kappa(y)-U\bigr)\cap(V\setminus W)=\emptyset$ and, since
$b-v\in -U$, we find that 
$\bigl(\kappa(y)+b-v\bigr)\cap(V\setminus W)=\emptyset$,
so $\bigl(\kappa(y)+b-v\bigr)\cap V\subset W$, and thus 
$\bigl(\kappa(y)+b\bigr)\cap V\subset v+W$. We have shown that
$\ff V([y,b])\in v+W$ for all $[y,b]\in\mathcal U$, thus proving \eqref{fvcontinuity}.

Reciprocally, assume $\ff V$ is continuous. Then, by the continuity
of $\pf V=q\circ \ff V$,
for any $W\in\mathcal B$ and for any $x\in \fdom V$
there is an open neighborhood $U$ of zero in $A$ and an open neighborhood $N$ of $x$
such that $\ff V(q((-U)\times N))\subset W$. Hence, for any $y\in N$ and $b\in U$
we have $\bigl(\kappa(y)-b\bigr)\cap (V\setminus W)=\emptyset$.
But this implies that $\bigl(\kappa(y)-U\bigr)\cap (V\setminus W)=\emptyset$
for any $y\in N$, so we conclude that $N\subset N_{W,U}$.
\qedhere
\end{proof}

\begin{lemma}\label{lemmaJP:finerthanfellbutnotmuch}
The following conditions are equivalent:
\begin{enumerate}
\item\label{finer1} For any $b\in A$ and any $x\in X$ such that $\hat b(x)\neq 0_x$ there
is a neighborhood $U$ of zero in $A$ such that
\[
x\in\interior\bigl\{y\in X\st \kappa(y)\cap(b+U)=\emptyset\bigr\}\;;
\]
\item\label{finer2} For any compact set $K\subset A$ and any $x\in \kappa^{-1}(\check K)$ there
is a neighborhood $U$ of zero in $A$ such that
\[
x\in\interior\bigl\{y\in X\st\kappa(y)\cap(K+U)=\emptyset\bigr\}\;.
\]
\end{enumerate}
\end{lemma}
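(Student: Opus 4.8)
The plan is to dispatch one direction immediately and treat the other as a routine compactness argument.

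First, for \eqref{finer2}$\Rightarrow$\eqref{finer1} I would simply specialize \eqref{finer2} to a singleton. Given $b\in A$ and $x\in X$ with $\hat b(x)\neq 0_x$, note that $\hat b(x)=q(b,x)\neq 0_x$ is equivalent to $b\notin\kappa(x)$, i.e.\ to $\kappa(x)\cap\{b\}=\emptyset$; hence $x\in\kappa^{-1}(\check{\{b\}})$ and, taking $K=\{b\}$ in \eqref{finer2}, the set $\{y\in X\st\kappa(y)\cap(K+U)=\emptyset\}=\{y\in X\st\kappa(y)\cap(b+U)=\emptyset\}$ it furnishes is exactly the one required by \eqref{finer1}.

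For the substantive direction \eqref{finer1}$\Rightarrow$\eqref{finer2}, I would fix a compact $K\subset A$ and $x$ with $\kappa(x)\cap K=\emptyset$. For each $b\in K$ one has $b\notin\kappa(x)$, so $\hat b(x)\neq 0_x$, and applying \eqref{finer1} yields a neighborhood $U_b$ of $0$ in $A$ such that $x\in N_b:=\interior\{y\in X\st\kappa(y)\cap(b+U_b)=\emptyset\}$. Next I would shrink: using continuity of addition in $A$, choose for each $b$ an open neighborhood $V_b$ of $0$ with $V_b+V_b\subset U_b$. The translates $b+V_b$ then form an open cover of $K$, so by compactness finitely many $b_1+V_{b_1},\dots,b_m+V_{b_m}$ already cover $K$; I set $U=\bigcap_{i=1}^m V_{b_i}$, a neighborhood of $0$, and $N=\bigcap_{i=1}^m N_{b_i}$, an open neighborhood of $x$ (open because each $N_{b_i}$ is an interior). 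The claim is that $N\subset\{y\in X\st\kappa(y)\cap(K+U)=\emptyset\}$, which gives $x\in N\subset\interior\{y\in X\st\kappa(y)\cap(K+U)=\emptyset\}$, as wanted.

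To prove the claim I would argue by contradiction: if $y\in N$ and $p\in\kappa(y)\cap(K+U)$, write $p=k+u$ with $k\in K$ and $u\in U$, pick $i$ with $k\in b_i+V_{b_i}$, say $k=b_i+v$ with $v\in V_{b_i}$, and observe that $p=b_i+(v+u)$ with $v+u\in V_{b_i}+V_{b_i}\subset U_{b_i}$; hence $p\in\kappa(y)\cap(b_i+U_{b_i})$, contradicting $y\in N\subset N_{b_i}$. The only point that needs care — and the main, if modest, obstacle — is precisely this bookkeeping: the thickened set $K+U$ must be controlled by the individual $b_i+U_{b_i}$, which is exactly what forces covering $K$ by the halved translates $b+V_b$ rather than by $b+U_b$ and taking $U$ inside every $V_{b_i}$. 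Everything else is routine.
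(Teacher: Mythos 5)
Your proof is correct and follows essentially the same route as the paper's: the singleton specialization for \eqref{finer2}$\Rightarrow$\eqref{finer1}, and for the converse the halving $V_b+V_b\subset U_b$, a finite subcover of $K$ by the translates $b_i+V_{b_i}$, and $U=\bigcap_i V_{b_i}$, so that $K+U\subset\bigcup_i(b_i+U_{b_i})$. The only cosmetic difference is that you phrase the final inclusion as a proof by contradiction where the paper writes it as a chain of inclusions of interiors; the content is identical.
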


\begin{proof}
Trivially $\eqref{finer2}\Rightarrow\eqref{finer1}$, so assume \eqref{finer1} holds. Then for any $b\in K$ 
there is a neighborhood $U_b\subset A$ of zero such that
\[
x\in\interior\bigl\{y\in X\st\kappa(y)\cap(b+U_b)=\emptyset\bigr\}\;.
\]
Pick a neighborhood $W_b$ of zero in $A$ such that $W_b+W_b\subset U_b$.
Then \[K\subset \bigcup_b(b+W_b)\;,\] so we can pick a finite covering
\[K\subset \bigcup_{i=1}^n(b_i+W_{b_i})\;.\]
Let $U=\bigcap_i W_{b_i}$.
Then $K+U\subset \bigcup_i(b_i+W_{b_i}+U)\subset \bigcup_i(b_i+U_{b_i})$, and therefore
\begin{eqnarray*}
x&\in& \bigcap_i\interior\bigl\{y\in X\st\kappa(y)\cap(b_i+U_{b_i})=\emptyset\bigr\}\\
&=&\interior\bigcap_i\bigl\{y\in X\st\kappa(y)\cap(b_i+U_{b_i})=\emptyset\bigr\}\\
&=&\interior \left\{y\in X\st\kappa(y)\cap\Bigl(\bigcup_i(b_i+U_{b_i})\Bigr)=\emptyset\right\}\\
&\subset&\interior \bigl\{y\in X\st\kappa(y)\cap(K+U)=\emptyset\bigr\}\;.
 \qedhere
\end{eqnarray*}
\end{proof}

\paragraph{Locally convex spaces.}

Using the previous results we can provide another equivalent condition to local triviality, in the case where the Hausdorff vector space $A$ is locally convex:

\begin{theorem}\label{thmJP-loctriviff}
Let $A$ be a Hausdorff locally convex space, and let $(\pi:E\to X,A,q)$ be a rank-$n$ quotient vector bundle with Fell-continuous kernel map $\kappa:X\to\Max_n A$. 
Then $\pi$ is locally trivial if and only if
for any $(b,x)\in A\times X$ such that $\hat b(x)\neq 0_x$
there is a neighborhood $U$ of zero in $A$ such that 
\[
x\in\interior\bigl\{y\in X\st\kappa(y)\cap(b+U)=\emptyset\bigr\}\;.
\]
\end{theorem}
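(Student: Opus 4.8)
The plan is to read off local triviality from the equivalences already in place. By Theorem~\ref{fVcontsuffices}, $\pi$ is locally trivial if and only if the fiber family $\ff V$ is continuous for every $n$-dimensional subspace $V\subset A$; by Lemma~\ref{lemmaJP:fvcontiffxinint}, continuity of $\ff V$ amounts to each $x\in\fdom V$ lying, for every basic neighborhood $W$ of the origin in $V$, in some set $N_{W,U}$; and the condition in the statement is precisely condition~\eqref{finer1} of Lemma~\ref{lemmaJP:finerthanfellbutnotmuch}, hence equivalent to the compact-set condition~\eqref{finer2}. So the proof reduces to matching the ``$N_{W,U}$'' requirement with the ``$\kappa^{-1}(\check K)$'' requirement, in both directions.

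For ``locally trivial $\Rightarrow$ the stated condition'', fix $(b,x)\in A\times X$ with $\hat b(x)\neq 0_x$, i.e.\ $b\notin\kappa(x)$. Since $\kappa(x)$ has codimension $n$ and $\CC b\cap\kappa(x)=\{0\}$, elementary linear algebra provides an $n$-dimensional subspace $V\subset A$ with $b\in V$ and $V\cap\kappa(x)=\{0\}$; then $x\in\fdom V$ and, because $A=V\oplus\kappa(x)$ with $b\in V$, the $V$-component of $b$ is $b$ itself, so $\pf V(b,x)=b$. By Theorem~\ref{fVcontsuffices} the map $\pf V$ is continuous, so, choosing an open neighborhood $W$ of $b$ in $V$ with $0\notin W$ (possible since $V$ is Hausdorff and $b\neq 0$), there are a neighborhood $U$ of $0$ in $A$ and an open neighborhood $N\subset\fdom V$ of $x$ with $\pf V\bigl((b+U)\times N\bigr)\subset W$. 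I claim $N\subset\{y\in X\st\kappa(y)\cap(b+U)=\emptyset\}$: if instead $z\in\kappa(y)\cap(b+U)$ for some $y\in N$, then $z\in b+U$ forces $\pf V(z,y)\in W$, whereas $z\in\kappa(y)$ and $y\in\fdom V$ force $\pf V(z,y)=0\notin W$ --- a contradiction. Hence $x\in N\subset\interior\{y\in X\st\kappa(y)\cap(b+U)=\emptyset\}$, as required.

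For the converse, use Lemma~\ref{lemmaJP:finerthanfellbutnotmuch} to replace the hypothesis by condition~\eqref{finer2}, and then verify the criterion of Lemma~\ref{lemmaJP:fvcontiffxinint} for an arbitrary $n$-dimensional $V\subset A$. Fix a norm on $V$ inducing its (Euclidean) topology, take $\mathcal B$ to be the collection of open balls $W=\{v\in V\st\norm v<r\}$, and fix $x\in\fdom V$. The sphere $K=\{v\in V\st\norm v=r\}$ is compact and, since $\kappa(x)\cap V=\{0\}$, disjoint from $\kappa(x)$, so $x\in\kappa^{-1}(\check K)$; by~\eqref{finer2} there is a neighborhood $U$ of $0$ in $A$, which we may take balanced, such that $x$ lies in the open set $M:=\fdom V\cap\interior\{y\in X\st\kappa(y)\cap(K+U)=\emptyset\}$. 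The crux is to show that $\kappa(y)\cap\bigl((V\setminus W)+U\bigr)=\emptyset$ for every $y\in M$: given $z\in\kappa(y)\cap\bigl((V\setminus W)+U\bigr)$, write $z=v+u$ with $\norm v\ge r$ and $u\in U$; then $(r/\norm v)z\in\kappa(y)$ because $\kappa(y)$ is a linear subspace, and $(r/\norm v)z=(r/\norm v)v+(r/\norm v)u\in K+U$ because $\norm{(r/\norm v)v}=r$ and $0<r/\norm v\le 1$ with $U$ balanced, contradicting $\kappa(y)\cap(K+U)=\emptyset$. Since $M$ is open and contained in $\fdom V$, this gives $M\subset N_{W,U}$, so $x\in N_{W,U}$; hence $\ff V$ is continuous by Lemma~\ref{lemmaJP:fvcontiffxinint} and $\pi$ is locally trivial by Theorem~\ref{fVcontsuffices}. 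The main obstacle is exactly this last step: $V\setminus W$ is not compact, so~\eqref{finer2} cannot be applied to it directly; the remedy is to radially retract onto the compact sphere $K$ and exploit that $\kappa(y)$ is scale-invariant while $U$ is balanced --- the same mechanism as in Lemma~\ref{lemmaJP-VxUV->E-3}, carried out fibrewise over $X$.
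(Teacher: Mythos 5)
Your proof is correct and follows the same overall architecture as the paper's: both reduce local triviality to continuity of the fiber/projection families via Theorem~\ref{fVcontsuffices}, translate that into the $N_{W,U}$ criterion of Lemma~\ref{lemmaJP:fvcontiffxinint}, and pass between the single-point and compact-set formulations using Lemma~\ref{lemmaJP:finerthanfellbutnotmuch}. Two points of divergence are worth recording. In the forward direction you argue directly from continuity of $\pf V$ at the point $(b,x)$, using $\pf V(b,x)=b$ and a neighborhood $W$ of $b$ in $V$ avoiding $0$, whereas the paper invokes Lemma~\ref{lemmaJP:fvcontiffxinint} with a neighborhood $W$ of the origin avoiding $b$ and then uses $b+U\subset(V\setminus W)+U$; these are equivalent, yours being marginally more self-contained. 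More interestingly, in the converse the crux is reducing the non-compact set $(V\setminus W)+U$ to the compact $K+U$ by radial scaling: the paper takes $U$ convex (this is where its local convexity hypothesis enters), concluding $cu\in U$ for $c\in(0,1]$ from $0\in U$, while you take $U$ balanced, which every topological vector space supplies as a neighborhood base at the origin, and shrinking $U$ only enlarges the relevant interior. Since balanced neighborhoods exist in an arbitrary Hausdorff topological vector space, your argument shows the local convexity hypothesis is superfluous, slightly strengthening the theorem and rendering the paper's bracketed remark about which implication uses local convexity moot. Everything else --- the choice of $K$ as the sphere bounding $W$, the verification that the open set $M$ is contained in $N_{W,U}$, and the appeal to $\kappa(y)$ being a linear subspace --- matches the paper's proof.
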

[We remark that only the reverse implication uses local convexity.]

\begin{proof}
Assume first that $\pi$ is locally trivial. Then, by Theorem~\ref{fVcontsuffices}, $\ff V$ is continuous for any linear subspace $V\subset A$
of dimension $n$. Let $(b,x)\in A\times X$ be such that $\hat b(x)\neq 0_x$; that is, $b\notin \kappa(x)$.
Then we can choose an $n$-dimensional vector subspace $V\subset A$ such that $b\in V$ and $V\cap\kappa(x)=\{0\}$ (let $V$ be spanned by $b,a_1,\ldots, a_{n-1}$ such that $b+\kappa(x),a_1+\kappa(x),\ldots,a_{n-1}+\kappa(x)$ is a basis of $A/\kappa(x)$).
Let $W\subset V$ be 
a neighborhood of zero in $V$ with $b\notin W$. By Lemma~\ref{lemmaJP:fvcontiffxinint}, there is a neighborhood $U$ of zero in $A$
such that
\[
x\in\interior\bigl\{y\in \fdom V\st \kappa(y)\cap\bigl((V\setminus W)+U\bigr)=\emptyset\bigr\}\;.
\]
Then, because $b\in V\setminus W$, we obtain
\begin{eqnarray*}
x&\in&\interior\bigl\{y\in X\st \kappa(y)\cap\bigl((V\setminus W)+U\bigr)
=\emptyset\bigr\}\\
&\subset& \interior\bigl\{y\in X\st \kappa(y)\cap\bigl(b+U\bigr)=\emptyset\bigr\}\;.
\end{eqnarray*}

In order to prove the converse let us assume that for any $(b,x)\in A\times X$ such that $\hat b(x)\neq 0_x$
there is a neighborhood $U$ of zero in $A$ satisfying 
\[
x\in\interior\bigl\{y\in X\st\kappa(y)\cap(b+U)=\emptyset\bigr\}\;.
\]
Equivalently, by Lemma~\ref{lemmaJP:finerthanfellbutnotmuch}, we assume that for any compact set $K\subset A$ and any $x\in\kappa^{-1}(\check K)$ there is a neighborhood $U\subset A$ of zero such that
\begin{equation}\label{eqK+U}
x\in\interior\bigl\{y\in X\st\kappa(y)\cap(K+U)=\emptyset\bigr\}\;.
\end{equation}
Since $A$ is locally convex we shall assume that $U$ is convex. Let $V\subset A$ be an arbitrary $n$-dimensional linear subspace, and
choose some norm on $V$. Let also $\mathcal B$ be the collection of balls around the origin of $V$ in that norm, let $W\in \mathcal B$, and let $x\in\fdom V$.
Now let $K$ be the (compact) boundary of $W$ in $V$. Then $x\in\kappa^{-1}(\check K)$, and \eqref{eqK+U} holds. 
We claim that for any linear subspace $P\subset A$ we have
$P\cap \bigl((V\setminus W)+U\bigr)=\emptyset$ if and only if $P\cap(K+U)=\emptyset$. 
One implication is clear because $K\subset V\setminus W$, so let $b\in P\cap \bigl((V\setminus W)+U\bigr)$.
Then $b=v+u$ with $v\in V\setminus W$ and $u\in U$, so, since $W$ is a ball, there is $c\in (0,1)$ such that $cv\in K$. Then $cu\in U$
because $U$ is convex and $0\in U$, and thus $cb\in P\cap (K+U)$, which proves the claim.
Hence, \eqref{eqK+U} is equivalent to
\[
x\in\interior\bigl\{y\in X\st \kappa(y)\cap\bigl((V\setminus W)+U\bigr)=\emptyset\bigr\}\;,
\]
which in turn is equivalent to the condition $x\in N_{W,U}$ of Lemma~\ref{lemmaJP:fvcontiffxinint} because we are assuming that $x$ belongs to the open set $\fdom V$. Hence, by Lemma~\ref{lemmaJP:fvcontiffxinint}, we have proved that $\ff V$ is continuous for all $n$-dimensional subspaces $V\subset A$, and thus, by Theorem~\ref{fVcontsuffices}, $\pi$ is locally trivial.
\qedhere
\end{proof}

\paragraph{Normed bundles and Banach bundles.} Let us conclude our study of local triviality by looking at continuous normed bundles and Banach bundles.

\begin{theorem}\label{normedloctriv}
Let $(\pi:E\to X,A,q)$ be a normed rank-$n$ quotient vector bundle with kernel map $\kappa:X\to\Max_n A$. If $\kappa$ is continuous with respect to the closed balls topology then:
\begin{enumerate}
\item\label{nl1} For every $n$-dimensional subspace $V\subset A$,
the bundle $E$ induced by $\kappa$ is trivial on the open set
$\fdom V=\kappa^{-1}\bigl(\ft V\bigr)$;
\item\label{nl2} $\pi$ is locally trivial.
\end{enumerate}
\end{theorem}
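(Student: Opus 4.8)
The plan is to prove \eqref{nl1} — triviality over each fiber domain $\fdom V$ — and to get \eqref{nl2} immediately from it. First I would record that by Lemma~\ref{cbcontainsfell} continuity of $\kappa$ for the closed balls topology entails Fell continuity, so all the apparatus of this section is available: the sets $\fdom V=\kappa^{-1}(\ft V)$ are open, and by Lemma~\ref{lemmaJP-VxUV->E} the restriction $q'\colon V\times\fdom V\to E_{\fdom V}$ of $q$ is an isomorphism in $\LBun(X)$ \emph{exactly} when the fiber family $\ff V$ (equivalently the projection family $\pf V$) is continuous. So the whole problem is: show $\ff V$ is continuous for every $n$-dimensional subspace $V\subset A$. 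For this I would use the criterion of Lemma~\ref{lemmaJP:fvcontiffxinint}, taking $\mathcal B$ to be the open balls about the origin for a fixed norm on $V$, and I would let $S\subset V$ be the corresponding unit sphere; since $V$ carries the Euclidean topology, $S$ is a compact subset of $A$, and for $y\in\fdom V$ we have $\kappa(y)$ closed and disjoint from $S$, hence $d(\kappa(y),S)>0$.

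The key step is a reformulation of the condition that appears in $N_{W,U}$. Fix $W=B_\rho(0)\subset V$ and a ball $B_\delta(0)\subset A$. I claim that for every $y\in\fdom V$,
\[
\kappa(y)\cap\bigl((V\setminus W)+B_\delta(0)\bigr)=\emptyset \quad\Longleftrightarrow\quad \rho\, d\bigl(\kappa(y),S\bigr)\ge\delta\;.
\]
The left-hand side says precisely $d\bigl(\kappa(y),V\setminus W\bigr)\ge\delta$, and using the homogeneity of the distance to a linear subspace together with compactness of $S$ one computes $d\bigl(\kappa(y),V\setminus W\bigr)=\rho\, d\bigl(\kappa(y),S\bigr)$: indeed any $v\in V$ with $\norm v\ge\rho$ satisfies $d(v,\kappa(y))=\norm v\,d(v/\norm v,\kappa(y))\ge\rho\, d(S,\kappa(y))$, while the value $\rho\, d(S,\kappa(y))$ is attained along $\rho S$ (here $\norm{\cdot}$ is the fixed norm on $V$).

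Now fix $W=B_\rho(0)$ and $x_0\in\fdom V$, set $\mu=d\bigl(\kappa(x_0),S\bigr)>0$ and $\delta=\rho\mu/2$. By Lemma~\ref{lemmaJP-d(k(x),K)} the set $\{P\in\Max A\st d(P,S)>\mu/2\}$ is open in the closed balls topology, hence $\fdom V\cap\kappa^{-1}\bigl(\{P\st d(P,S)>\mu/2\}\bigr)$ is an open set containing $x_0$; by the equivalence above (with $\rho\, d(\kappa(y),S)>\rho\mu/2=\delta$ implying the left-hand side), it is contained in $\{y\in\fdom V\st \kappa(y)\cap((V\setminus W)+B_\delta(0))=\emptyset\}$, so $x_0$ lies in the interior $N_{W,B_\delta(0)}$ of that set. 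By Lemma~\ref{lemmaJP:fvcontiffxinint} this proves $\ff V$ is continuous, and then Lemma~\ref{lemmaJP-VxUV->E} yields that $q$ restricts to a $\LBun(X)$-isomorphism $V\times\fdom V\to E_{\fdom V}$; that is \eqref{nl1}. For \eqref{nl2} one may either observe that every $x\in X$ admits an $n$-dimensional $V$ with $V\cap\kappa(x)=\{0\}$, so $x\in\fdom V$ and is covered by a trivializing open set, or simply invoke Theorem~\ref{fVcontsuffices}, whose hypothesis (continuity of $\ff V$ for all $n$-dimensional $V$) we have just verified.

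The main obstacle is the equivalence displayed in the second paragraph. The condition defining $N_{W,U}$ asks $\kappa(y)$ to avoid the \emph{unbounded} tube $(V\setminus W)+U$, and conditions about unbounded sets are invisible to the Fell topology; the point that makes everything work is that the homogeneity of the subspaces $\kappa(y)$ collapses this unbounded requirement to a statement about the distance from $\kappa(y)$ to the single compact sphere $S$, and distances to compact sets are exactly what the closed balls topology — through Lemma~\ref{lemmaJP-d(k(x),K)} — is built to control. Once this reduction is in hand, the rest is routine bookkeeping with Lemmas~\ref{lemmaJP-VxUV->E} and~\ref{lemmaJP:fvcontiffxinint}.
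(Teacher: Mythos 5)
Your argument is correct, but it takes a genuinely different route from the paper's. The paper's proof is a two-line reduction: since a normed space is locally convex, it invokes Theorem~\ref{thmJP-loctriviff} and verifies its single-vector hypothesis directly --- given $b\notin\kappa(x)$, set $\varepsilon=d(b,\kappa(x))/2$ and observe $x\in\kappa^{-1}(\cb b\varepsilon)\subset\{y\st\kappa(y)\cap(b+B_\varepsilon(0))=\emptyset\}$ --- so all the work of passing from a condition on single vectors to the unbounded tube $(V\setminus W)+U$ is delegated to Theorem~\ref{thmJP-loctriviff} (via Lemma~\ref{lemmaJP:finerthanfellbutnotmuch} and a convexity/scaling argument with $K=\partial W$). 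You instead bypass Theorem~\ref{thmJP-loctriviff} entirely and verify the $N_{W,U}$ criterion of Lemma~\ref{lemmaJP:fvcontiffxinint} head-on, using the identity $d(\kappa(y),V\setminus W)=\rho\,d(\kappa(y),S)$ and Lemma~\ref{lemmaJP-d(k(x),K)} applied to the unit sphere $S$ of $V$. The two arguments exploit the same phenomenon (homogeneity of the subspaces $\kappa(y)$ collapses the unbounded tube condition to a condition on a compact set), but yours uses the metric structure directly and is self-contained within the normed setting, while the paper's factors through the more general locally convex criterion; what you lose is the reuse of Theorem~\ref{thmJP-loctriviff}, what you gain is a shorter chain of lemmas and an explicit quantitative statement.

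One small point to make explicit: the identity $d(v,\kappa(y))=\norm v\,d(v/\norm v,\kappa(y))$ holds because $\kappa(y)$ is a linear subspace and $d$ is the distance of the ambient norm of $A$, so the ``fixed norm on $V$'' in your argument must be the restriction of the norm of $A$ to $V$ (its open balls do form a basis of neighborhoods of $0$ in $V$, since $V$ carries the Euclidean topology, so Lemma~\ref{lemmaJP:fvcontiffxinint} still applies). With that choice spelled out, every step checks: $V\setminus W=\{v\in V\st\norm v\ge\rho\}$, the infimum $\inf_{v\in V\setminus W}d(v,\kappa(y))$ equals $\rho\,d(S,\kappa(y))$ by scaling, $\kappa(y)\cap\bigl((V\setminus W)+B_\delta(0)\bigr)=\emptyset$ iff $d(\kappa(y),V\setminus W)\ge\delta$ because the ball is open, and Lemma~\ref{lemmaJP-d(k(x),K)} supplies exactly the closed-balls-open set $\ft S_{\mu/2}$ you need to exhibit $x_0$ in the interior.
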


\begin{proof}
The space $A$ is Hausdorff and, by Lemma~\ref{cbcontainsfell}, the map $\kappa$ is Fell-continuous. Hence, by Lemma~\ref{lemmaJP-VxUV->E} and Theorem~\ref{fVcontsuffices}, the two conditions \eqref{nl1} and \eqref{nl2} are equivalent. So let us apply Theorem~\ref{thmJP-loctriviff} (since $A$ is also locally convex). Given $(b,x)\in A\times X$ with $\hat b(x)\neq 0_x$ (equivalently, $b\notin\kappa(x)$) let us define
\begin{eqnarray*}
\varepsilon&=&d\bigl(b,\kappa(x)\bigr)/2\;,\\
U&=&B_\varepsilon(0)\;.
\end{eqnarray*}
Then
\[
x\in \kappa^{-1}(\cb b\varepsilon)\subset \bigl\{y\in X\st\kappa(y)\cap(b+U)=\emptyset\bigr\}\;,
\]
and it follows from Theorem~\ref{thmJP-loctriviff} that $\pi$ is locally trivial.
\qedhere
\end{proof}

This implies the following result, which is mentioned but not proved in \cite{FD1}*{p.\ 129}:

\begin{corollary}
Every Banach bundle of constant finite rank on a locally compact Hausdorff space is locally trivial.
\end{corollary}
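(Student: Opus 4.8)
The plan is to obtain this as a direct consequence of the machinery already developed, combining Theorems~\ref{banachasquotient}, \ref{classcontnorm}, and \ref{normedloctriv}, so that no genuinely new argument is needed. Let $\pi\colon E\to X$ be a Banach bundle of constant rank $n$ over a locally compact Hausdorff space $X$. The first step is to apply Theorem~\ref{banachasquotient} to present $\pi$ as a continuous normed sectional quotient vector bundle $(\pi,A,q)$, where $A=\sections_0(\pi)$ is a Banach space and the quotient norm on $E$ coincides with the Banach bundle norm; this is the step that consumes the hypothesis of local compactness of $X$, through the existence of enough compactly supported sections.

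Writing $\kappa\colon X\to\Sub A$ for the kernel map of this quotient bundle, the next step is to locate $\kappa$ precisely. The fibers $E_x$ are Banach spaces, hence Hausdorff, so $\kappa$ is valued in $\Max A$ by Corollary~\ref{MaxvsT1}; and since $\dim E_x=n$ for every $x$, each subspace $\kappa(x)$ has codimension $n$ in $A$, so $\kappa$ is in fact valued in $\Max_n A$. Because the quotient vector bundle $(\pi,A,q)$ is continuous, the implication $\eqref{cln1}\Rightarrow\eqref{cln2}$ of Theorem~\ref{classcontnorm} shows that $\kappa$ is continuous with respect to the closed balls topology on $\Max A$, and hence $\kappa\colon X\to\Max_n A$ is continuous for the relative closed balls topology.

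The final step is to feed this into Theorem~\ref{normedloctriv}, applied to the normed rank-$n$ quotient vector bundle $(\pi,A,q)$ with its closed-balls-continuous kernel map $\kappa\colon X\to\Max_n A$: conclusion \eqref{nl2} of that theorem is precisely that $\pi$ is locally trivial, and since by Theorem~\ref{banachasquotient} the original bundle $E$ is isomorphic over $X$ to the bundle induced by $\kappa$, local triviality transfers back to $E$ itself. I expect no serious obstacle here; the only points requiring attention are bookkeeping ones --- verifying that the constant-rank hypothesis genuinely lands $\kappa$ in $\Max_n A$ rather than merely in $\Max A$, noting that first countability of $X$ is nowhere needed along this route (so that ``locally compact Hausdorff'' really does suffice, in contrast to Corollary~\ref{classbanach}), and confirming that the identification in Theorem~\ref{banachasquotient} is an isomorphism of linear bundles over $X$ so that the conclusion indeed applies to the given $E$.
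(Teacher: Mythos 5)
Your proof is correct and follows essentially the same route as the paper: Theorem~\ref{banachasquotient} to realize the Banach bundle as a continuous normed sectional quotient vector bundle, closed-balls continuity of the kernel map $\kappa:X\to\Max_n A$, and then Theorem~\ref{normedloctriv}. Your one deviation --- citing the implication $\eqref{cln1}\Rightarrow\eqref{cln2}$ of Theorem~\ref{classcontnorm} directly rather than Corollary~\ref{classbanach} --- is actually a small improvement, since Corollary~\ref{classbanach} assumes $X$ first countable (which a locally compact Hausdorff space need not be), whereas the direction of Theorem~\ref{classcontnorm} you invoke requires no such hypothesis.
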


\begin{proof}
Recall from Theorem~\ref{banachasquotient} that every Banach bundle $\pi:E\to X$ with $X$ locally compact Hausdorff can be made a quotient vector bundle $(\pi,A,q)$ by taking $A=\sections_0(\pi)$ with the supremum norm and $q=\eval$. By Corollary~\ref{classbanach} this bundle is classified by a kernel map $\kappa$ which is continuous with respect to the closed balls topology. The conclusion follows from Theorem~\ref{normedloctriv}. \qedhere
\end{proof}

\section{Grassmannians}

The role played by the spaces $\Sub A$ in the classification of quotient vector bundles $(\pi,A,q)$ is analogous to that of Grassmannians in the classification of locally trivial finite rank vector bundles. However, the relations between $\Sub A$ and Grassmannians in the preceding sections are obscured by the fact that $A$ has in general been infinite dimensional, so our purpose in the present section is to investigate the extent to which the topologies of $\Sub A$ and Grassmannians coincide. This said, 
this section is largely independent from the preceding ones. We shall only need to consider the lower Vietoris topology on $\Sub A$, and the main results of this section will require $A$ to be Hausdorff.

\paragraph{Linearly independent open sets.}

Let $A$ be a topological vector space.
We say that a finite collection of open sets $U_1,\dots,U_k\subset A$
is \emph{linearly independent} if any $k$ vectors $v_1,\dots,v_k$ with $v_i\in U_i$
are linearly independent.

\begin{lemma}\label{lemmaJP:linindepopensets}
Let $A$ be a Hausdorff vector space, let $S\subset A$ be a finite set, and for each $v\in S$ let $U_v$ be a neighborhood of $v$.
Let $e_1,\dots,e_m$ be a basis of the linear span $\linspan{S}$. 
Then there are linearly independent neighborhoods $U_1',\dots,U_m'$ of
$e_1,\dots,e_m$ such that, for any $v\in S$, if we write $v=\sum_ja_{j,v}e_j$ (with $a_{j,v}\in\CC$)
then $\sum_ja_{j,v}U_j'\subset U_v$.
\end{lemma}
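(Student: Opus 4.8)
The plan is to meet the two requirements — the containments $\sum_j a_{j,v}U_j'\subset U_v$, and the linear independence of $U_1',\dots,U_m'$ — essentially separately, producing two families of neighborhoods and then intersecting them. Note first that $\linspan{S}$ is finite dimensional (so $m<\infty$) and that for each $v\in S$ the coefficients $a_{j,v}$ are well defined because $e_1,\dots,e_m$ is a basis. The containments are the easy half: for fixed $v\in S$ the map $A^m\to A$ sending $(w_1,\dots,w_m)$ to $\sum_j a_{j,v}w_j$ is continuous, being built from scalar multiplication and addition, and it carries $(e_1,\dots,e_m)$ to $v\in U_v$; hence there are open neighborhoods $W_{1,v},\dots,W_{m,v}$ of $e_1,\dots,e_m$ with $\sum_j a_{j,v}W_{j,v}\subset U_v$. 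Since $S$ is finite, $W_j:=\bigcap_{v\in S}W_{j,v}$ is still a neighborhood of $e_j$, and $\sum_j a_{j,v}W_j\subset U_v$ for every $v\in S$.

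For the linear independence I would encode dependence of an $m$-tuple by a nonzero coefficient vector and then \emph{compactify the coefficient space}: $(w_1,\dots,w_m)$ is linearly dependent if and only if $\sum_j z_j w_j=0$ for some $z$ in the unit sphere $S^{2m-1}\subset\CC^m$, which is compact. Consider the continuous map $g\colon A^m\times S^{2m-1}\to A$, $g(w,z)=\sum_j z_j w_j$. Writing $e=(e_1,\dots,e_m)$, we have $g(e,z)\neq 0$ for all $z\in S^{2m-1}$ because $e_1,\dots,e_m$ is a basis. Since $A$ is Hausdorff, hence $T_1$, the set $A\setminus\{0\}$ is open, so $g^{-1}(A\setminus\{0\})$ is an open set in $A^m\times S^{2m-1}$ containing $\{e\}\times S^{2m-1}$; by the tube lemma (here is where compactness of $S^{2m-1}$ is used) there is a basic open neighborhood $\mathcal N_1\times\dots\times\mathcal N_m$ of $e$ such that $\sum_j z_j w_j\neq 0$ whenever $w_j\in\mathcal N_j$ for all $j$ and $z\in S^{2m-1}$ — that is, $\mathcal N_1,\dots,\mathcal N_m$ are linearly independent open sets.

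Finally I would put $U_j'=W_j\cap\mathcal N_j$: this is a neighborhood of $e_j$; the sets are linearly independent because $U_j'\subset\mathcal N_j$; and $\sum_j a_{j,v}U_j'\subset\sum_j a_{j,v}W_j\subset U_v$ for each $v\in S$. The degenerate case $\linspan{S}=\{0\}$ (so $m=0$) is vacuous.

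The one genuine obstacle is the independence clause. The $U_j'$ are required to be neighborhoods in $A$, so they contain vectors lying off the finite-dimensional subspace $\linspan{S}$, and for those the usual finite-dimensional argument — openness of the nondegenerate locus inside $(\linspan S)^m$ — is not enough; moreover, since $A$ is not assumed locally convex, one cannot fall back on coordinate functionals or a continuous projection onto $\linspan{S}$ (these need not exist). Rephrasing dependence in terms of the compact sphere of coefficient vectors and running a tube-lemma argument is what produces the independence \emph{uniformly} over all admissible choices of the $w_j$.
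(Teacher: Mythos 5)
Your proof is correct and follows essentially the same route as the paper's: the paper also obtains the containments by continuity of the maps $(u_1,\dots,u_m)\mapsto\sum_j a_{j,v}u_j$, obtains linear independence by applying the tube lemma to $(z,u)\mapsto\sum_j z_j u_j$ on $S^{2m-1}\times A^m$ with $S^{2m-1}\subset\CC^m$ compact, and then intersects the two families of neighborhoods. No gaps.
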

\begin{proof}
We divide the proof into three steps:
\begin{enumerate}
\item\label{cond1} We claim that there are linearly independent
neighborhoods $U_j^{\text{l.i.}}$ of $e_j$ (with $j=1,\dots,m$).
Consider the unit sphere $S^{2m-1}\subset\mathbb C^m$ and let
$f: S^{2m-1}\times A^m\to A$ be the function
$f(z_1,\dots,z_m,u_1,\dots,u_m)=\sum_jz_ju_j$. Then
$S^{2m-1}\times\{(e_1,\dots,e_m)\}\subset f^{-1}(A\setminus 0)$ so,
since $S^{2m-1}$ is compact, there are (by the tube lemma) open neighborhoods $U_j^{\text{l.i.}}$ of
$e_j$ such that $S^{2m-1}\times\prod U_j^{\text{l.i.}}\subset f^{-1}(A\setminus 0)$. This proves the claim.
\item\label{cond2} Now let $v=\sum a_{j,v}e_j\in S$.
We claim that there are neighborhoods $U_{j,v}$
of $e_j$ such that $\sum_j a_{j,v} U_{j,v}\subset U_v$.
Consider the function $f_v: A^m\to A$ given by
$f_v(u_1,\dots,u_m)=\sum_ja_{j,v}u_j$. Since $f_v$ is continuous, there are neighborhoods
$U_{j,v}$ of $e_j$ such that $f_v(U_{1,v}\times\dots\times U_{m,v})\subset U_v$, which proves the claim.
\item Now, for each $j=1,\ldots,m$, let
\[U_j'=U_j^{\text{l.i.}}\cap \left(\bigcap_{v\in S}U_{j,v}\right)\;.\] By \eqref{cond1}, the sets
$U_j'$  (which are open because $S$ is finite) are linearly independent. And, by \eqref{cond2}, we have $\sum_j a_{j,v}U'_j\subset U_v$,
which concludes the proof. \qedhere
\end{enumerate}
\end{proof}

\paragraph{Grassmannians as subspaces.}

For any Hausdorff vector space $A$ (of any dimension) and any integer $k>0$, we shall write $V(k,A)$ for the set of injective linear maps from $\CC^k$ to $A$ with the product topology, and we shall write $S(v,U)$, where $v\in\CC^k$ and $U\subset A$ is open, for the sub basic open set consisting of those $\phi\in V(k,A)$ such that $\phi(v)\in U$. Denoting by $\Gr(k,A)$ the set of all the $k$-dimensional subspaces of $A$, we have a surjective map
\[
p_k:V(k,A)\to\Gr(k,A)\;,
\]
defined by
$p_k(\phi) = \image\phi$.
We shall refer to $\Gr(k,A)$, equipped with the quotient topology, as a  \emph{Grassmannian of $A$} (so we have a homeomorphism $\Gr(k,A)\cong V(k,A)/\GL(k,\CC)$).
This, of course, is a generalization of the usual definition of Grassmannian of a finite dimensional vector space.

\begin{theorem}\label{grsubspace}
Let $A$ be a Hausdorff vector space. For any integer $k>0$, the Grassmannian $\Gr(k,A)$ is a topological subspace of $\Sub A$.
\end{theorem}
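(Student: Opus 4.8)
The plan is to show that the topology $\Gr(k,A)$ already carries as a quotient of $V(k,A)$ coincides with the one it inherits as a subset of $\Sub A$ (whose topology here is the lower Vietoris topology, with subbasic sets $\ps U$ for $U\subset A$ open). The key observation is that it suffices to check that $p_k\colon V(k,A)\to\Gr(k,A)$ is \emph{continuous} and \emph{open} when $\Gr(k,A)$ carries the lower Vietoris subspace topology: a continuous open surjection is a quotient map, so the lower Vietoris subspace topology is a quotient topology for $p_k$, and hence equals the quotient topology that by definition makes $\Gr(k,A)$ a Grassmannian.

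Continuity is a one-line computation: for each open $U\subset A$,
\[
p_k^{-1}\bigl(\ps U\cap\Gr(k,A)\bigr)=\bigl\{\phi\in V(k,A)\st\image\phi\cap U\neq\emptyset\bigr\}=\bigcup_{v\in\CC^k}S(v,U)\;,
\]
which is a union of subbasic open sets of $V(k,A)$; since the sets $\ps U\cap\Gr(k,A)$ form a subbasis of the lower Vietoris subspace topology, $p_k$ is continuous.

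The substantive part — which I expect to be the main obstacle — is that $p_k$ is open. I would argue pointwise: fix an open set $O\subset V(k,A)$ and a point $\phi\in O$, choose neighborhoods $W_i$ of $\phi(e_i)$ in $A$ (where $e_1,\dots,e_k$ is the standard basis of $\CC^k$) with $(W_1\times\dots\times W_k)\cap V(k,A)\subset O$, and then apply Lemma~\ref{lemmaJP:linindepopensets} with the finite set $S$ taken to be the basis $\{\phi(e_1),\dots,\phi(e_k)\}$ of $\image\phi$ itself. Because in that basis $\phi(e_i)$ has coordinates $\delta_{ij}$, the lemma hands back \emph{linearly independent} neighborhoods $U_1',\dots,U_k'$ of $\phi(e_1),\dots,\phi(e_k)$ with $U_i'\subset W_i$. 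The claim is then that $N:=\ps{U_1'}\cap\dots\cap\ps{U_k'}\cap\Gr(k,A)$ is a lower-Vietoris-open neighborhood of $\image\phi$ contained in $p_k(O)$: it contains $\image\phi$ since $\phi(e_i)\in U_i'\cap\image\phi$, and if $Q\in N$ then choosing $w_i\in Q\cap U_i'$ yields, by linear independence of the $U_i'$, linearly independent vectors $w_1,\dots,w_k\in Q$, hence a basis of $Q$ since $\dim Q=k$; the corresponding injective linear map $e_i\mapsto w_i$ then lies in $(W_1\times\dots\times W_k)\cap V(k,A)\subset O$ and has image $Q$, so $Q\in p_k(O)$. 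Taking the union of these $N$'s over $\phi\in O$ shows $p_k(O)$ is open, and combining with the previous paragraph finishes the proof. The only point needing care is exactly this dimension count: it is because $\dim Q$ is \emph{exactly} $k$ that the condition ``$Q$ meets each $U_i'$'' pins down $Q$, and this is precisely why working inside a fixed Grassmannian rather than all of $\Sub A$ makes the argument go through.
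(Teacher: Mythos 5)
Your proof is correct and rests on the same key ingredient as the paper's (Lemma~\ref{lemmaJP:linindepopensets}), but you organize the hard direction differently, and the reorganization buys a genuine simplification. The paper compares the two topologies on $\Gr(k,A)$ directly: given a quotient-open $W$ and $P=p_k(\phi)\in W$, it must work with an arbitrary subbasic neighborhood $\bigcap_{i=1}^n S(u_i,U_i)$ of $\phi$ inside the \emph{saturated} set $p_k^{-1}(W)$, where the $u_i$ may be linearly dependent; this forces a reduction to a partial basis $u_1,\dots,u_m$ (with $m\le k$ possible) and a final reparametrization by some $g\in\GL(k,\CC)$ to land back inside $\bigcap_i S(u_i,U_i)$. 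By instead proving that $p_k$ is open for the lower Vietoris subspace topology and invoking the fact that a continuous open surjection is a quotient map, you only ever handle images of arbitrary open sets of $V(k,A)$, so you may use product neighborhoods indexed by the standard basis $e_1,\dots,e_k$; the finite set $S$ fed to Lemma~\ref{lemmaJP:linindepopensets} is then itself a basis, the coefficients are $\delta_{ij}$, and the $\GL(k,\CC)$ step disappears because the map $e_i\mapsto w_i$ already has image $Q$. The two routes are equivalent in substance --- the saturation $p_k^{-1}\bigl(p_k(O)\bigr)=\bigcup_{g\in\GL(k,\CC)}\{\phi\circ g\st\phi\in O\}$ of an open set is open, so openness of $p_k$ onto the quotient topology is automatic and the paper's statement follows from yours and conversely --- but yours is the lighter computation. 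The one step worth making explicit is that the sets $(W_1\times\dots\times W_k)\cap V(k,A)$ really do form a basis of $V(k,A)$, i.e.\ that each subbasic set $S(v,U)$ is open in the topology of evaluation at $e_1,\dots,e_k$; this holds because $(a_1,\dots,a_k)\mapsto\sum_iv_ia_i$ is continuous from $A^k$ to $A$.
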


\begin{proof}
Let us first prove that, given an open set $U\subset A$, the set
\[
\ps U\cap\Gr(k,A)
\]
is open in the quotient topology.
We only need to show that the set
\[p_k^{-1}\bigl(\ps U\cap\Gr(k,A)\bigr)=\bigl\{\phi\in V(k,A)\st \image\phi\cap U\neq\emptyset\}\;,\]
is open in $V(k,A)$.
Given $\phi\in p_k^{-1}\bigl(\ps U\cap\Gr(k,A)\bigr)$, there is $v\in\CC^k$ such that $\phi(v)\in U$. Then clearly
$\phi\in S(v,U)\subset p_k^{-1}\bigl(\ps U\cap\Gr(k,A)\bigr)$, which completes the proof.

Now we prove that any set $W\subset \Gr(k,A)$ which is open in the quotient topology is also open 
in the topology induced by $\Sub A$. Let $P$ be an arbitrary element of $W$, with $W$ open in the quotient topology. In order to show that $W$ is also open in the subspace topology we shall find an open set $\mathcal U\subset \Sub A$ such that
\begin{equation}\label{eq:grassSub}
P\in\Gr(k,A)\cap\mathcal U \subset W\;.
\end{equation}
First we observe that $P=p_k(\phi)$ for some $\phi\in p_k^{-1}(W)$, so there are
\[u_1,\dots,u_n\in\CC^k\] and open sets \[U_1,\dots,U_n\subset A\]
such that
\begin{equation}\label{eq:SuiUiW}
\phi\in\bigcap_{i=1}^n S(u_i,U_i)\subset p_k^{-1}(W)\;.
\end{equation}
We may assume without loss of generality that for some $m\leq n$ the vectors $u_1,\dots,u_m$ form a basis of $\linspan{u_1,\dots,u_n}$.
Then we are within the conditions of Lemma~\ref{lemmaJP:linindepopensets} with
\[
\begin{array}{rcl}
S&=&\{\phi(u_1),\ldots,\phi(u_n)\}\;,\\
\phi(u_i)&\in& U_i\ \ \ \ \ \ (i=1,\ldots,n)\;,\\
e_j&=&\phi(u_j)\ \ \ \ (j=1,\ldots,m)\;.
\end{array}
\]
Let $U_j'$ be as in Lemma~\ref{lemmaJP:linindepopensets}.
Then $P\in\bigcap_{j=1}^m\ps U'_j$, so let us define
\[
\mathcal U = \bigcap_{j=1}^m\ps U'_j\;,
\]
and let us show that the inclusion in \eqref{eq:grassSub} holds:
\begin{equation}\label{eq:grassSub2}
\Gr(k,A)\cap\mathcal U \subset W
\end{equation}
Let $Q\in\Gr(k,A)\cap\mathcal U$. Then $Q=p_k(\psi)$ for some $\psi\in V(k,A)$. For each $j=1,\dots,m$ we have $Q\in\ps U_j'$, so there is $w_j\in\CC^k$ such that
$\psi(w_j)\in U'_j$. The open sets $U'_j$ are linearly independent, so the vectors $w_j$ are linearly independent, too. Hence, there is
$g\in \GL(k,\CC)$ such that $g(u_j)=w_j$ for all $j=1,\dots,m$, and
\[
Q=\image(\psi\circ g)=p_k(\psi\circ g)\;.
\]
Now, for each $i=1,\dots,n$ we can write
$u_i=\sum_{j=1}^ma_{ij}u_j$, with $a_{ij}\in\CC$. Then
\[\psi(g(u_i))=\sum_{j=1}^m a_{ij}\psi(g(u_j))=\sum_{j=1}^m a_{ij}\psi(w_j)\in\sum_{j=1}^m a_{ij}U_j'\subset U_i\;,\]
and thus $\psi \circ g\in S(u_i,U_i)$. Since, by \eqref{eq:SuiUiW}, we have
\[
\bigcap_{i=1}^n S(u_i,U_i)\subset p_k^{-1}(W)\;,
\]
we obtain $Q=p_k(\psi \circ g)\in W$, showing that \eqref{eq:grassSub2} holds. \qedhere
\end{proof}

\begin{corollary}
For any $k\in\{1,\ldots,n\}$ we have a homeomorphism
\[\Max_k \CC^n\cong \Gr(n-k,\CC^n)\;.\]
\end{corollary}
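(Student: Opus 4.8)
The plan is to show that on the common underlying set $\Gr(n-k,\CC^n)=\Max_k\CC^n$ the relevant topologies coincide. These two spaces have the same points: $\CC^n$ being finite dimensional and Hausdorff, every linear subspace is closed, and $\dim(\CC^n/P)=k$ exactly when $\dim P=n-k$. By Theorem~\ref{grsubspace} the quotient topology on $\Gr(n-k,\CC^n)$ is the one induced from the lower Vietoris topology of $\Sub\CC^n$, while the Fell topology on $\Max\CC^n$ refines the lower Vietoris topology by construction. Hence the desired homeomorphism will follow once I prove that each subbasic Fell-open set $\ft K\cap\Gr(n-k,\CC^n)$, with $K\subset\CC^n$ compact, is open in the relative lower Vietoris topology. (When $k=n$ both spaces are the singleton $\{\{0\}\}$, so I may assume $k<n$ and write $m:=n-k\geq 1$.)

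Next I would carry out this reduction concretely. Fix $P_0\in\Gr(m,\CC^n)$ with $P_0\cap K=\emptyset$; I may assume $K\neq\emptyset$. With respect to the Euclidean norm choose $R>0$ with $K\subset\overline{B_R(0)}$ (possible since $0\in P_0$ forces $0\notin K$), and put $\delta:=d(P_0,K)$, which is strictly positive because $P_0$ is closed and $K$ compact. Pick an orthonormal basis $e_1,\dots,e_m$ of $P_0$. The claim is that for $\varepsilon>0$ small enough the lower Vietoris neighbourhood
\[
\mathcal W_\varepsilon:=\ps{B_\varepsilon(e_1)}\cap\dots\cap\ps{B_\varepsilon(e_m)}\cap\Gr(m,\CC^n)
\]
of $P_0$ is contained in $\ft K\cap\Gr(m,\CC^n)$. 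Indeed, any $P\in\mathcal W_\varepsilon$ contains vectors $v_i$ with $\norm{v_i-e_i}<\varepsilon$, and a routine estimate comparing $\norm{\sum_ic_iv_i}$ with $\norm{\sum_ic_ie_i}$ shows that for $\varepsilon<1/m$ the $v_i$ are linearly independent, hence a basis of $P$; so every element of $P$ has the form $v=\sum_ic_iv_i$.

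It then remains to verify — and this is the crux — that for $\varepsilon$ small every such $v$ avoids $K$, uniformly over all coefficient vectors. Writing $\sigma=\sum_i|c_i|$, one has $\norm{\sum_ic_ie_i}=(\sum_i|c_i|^2)^{1/2}\geq\sigma/\sqrt m$ and $\norm{v-\sum_ic_ie_i}\leq\varepsilon\sigma$, and I would split on the size of $\sigma$. If $\sigma<\delta/(2\varepsilon)$, then since $\sum_ic_ie_i\in P_0$ we get $\norm{v-p}\geq\norm{\sum_ic_ie_i-p}-\varepsilon\sigma\geq\delta-\tfrac{\delta}{2}>0$ for every $p\in K$; if instead $\sigma\geq\delta/(2\varepsilon)$ and also $\varepsilon<1/(2\sqrt m)$, then $\norm v\geq\sigma/\sqrt m-\varepsilon\sigma\geq\sigma/(2\sqrt m)\geq\delta/(4\varepsilon\sqrt m)$, which exceeds $R$ once $\varepsilon<\delta/(4R\sqrt m)$, so $v\notin\overline{B_R(0)}\supseteq K$. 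Thus, taking $\varepsilon$ below all these thresholds, $\mathcal W_\varepsilon\subset\ft K\cap\Gr(m,\CC^n)$, so the relative Fell and relative lower Vietoris topologies agree and the identity is the desired homeomorphism. The main obstacle is precisely this last uniformity: a lower Vietoris basic neighbourhood of $P_0$ only records that nearby subspaces meet finitely many small balls, so one has to rule out by hand the points of $P$ that are large, which is exactly the role of the case $\sigma\geq\delta/(2\varepsilon)$. (One could hope to shortcut this using compactness of a Grassmannian together with the Hausdorffness of $\Max_m A$ in the Fell topology, but identifying a topology on this set that is simultaneously known to be compact and comparable to both of ours is no easier than the estimate above.)
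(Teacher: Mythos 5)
Your proof is correct. The paper offers no argument at all for this corollary: it is meant to follow instantly from Theorem~\ref{grsubspace}, since $\Sub\CC^n=\Max\CC^n$ and, in the Grassmannian section, $\Max_k\CC^n$ is implicitly read with the relative lower Vietoris topology, under which it is literally the subspace $\Gr(n-k,\CC^n)\subset\Sub\CC^n$. You go further by taking seriously the official definition of $\Max_k A$ from the finite-rank section, namely with the \emph{relative Fell} topology, and you close the resulting gap by showing that each subbasic set $\ft K\cap\Gr(n-k,\CC^n)$ is already open in the relative lower Vietoris topology. Your quantitative argument (orthonormal basis $e_1,\dots,e_m$ of $P_0$, perturbed basis $v_i$ of a nearby $P$, and the case split on $\sigma=\sum_i|c_i|$ to handle separately the points of $P$ near $P_0$ and the points so large that they escape a ball containing $K$) is sound; the uniformity issue you flag is indeed the only nontrivial point, and your estimates resolve it. So your route is strictly more informative than the paper's: it also proves that on $\Max_k\CC^n$ the Fell and lower Vietoris topologies coincide, which is what makes the statement unambiguous. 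The only cosmetic caveat is that for $k=n$ the paper never defines $\Gr(0,\CC^n)$ (its Grassmannians are indexed by $k>0$), which you correctly dispose of by noting both sides are singletons.
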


\paragraph{A new basis for the lower Vietoris topology.}

Let us provide a description of the lower Vietoris topology in terms of the Grassmannian topologies. In order to do this, first we need a finer subbasis for the lower Vietoris topology:

\begin{lemma}\label{lemmaJP:basislinindep}
Let $A$ be a Hausdorff vector space.
Then the topology of $\Sub A$ has as a basis the collection of
finite intersections $\bigcap_{i=1}^k\ps U_i$ where
$U_1,\dots,U_k\subset A$ are linearly independent open sets.
\end{lemma}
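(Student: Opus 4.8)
The plan is to start from the known subbasis $\{\ps U\st U\subset A\text{ open}\}$ for the lower Vietoris topology on $\Sub A$, so that the finite intersections $\bigcap_{i=1}^n\ps U_i$ with $U_1,\dots,U_n$ \emph{arbitrary} open sets already form a basis. It therefore suffices to show that every such set is, around each of its points, contained in one of the distinguished sets $\bigcap_{j=1}^m\ps U'_j$ with $U'_1,\dots,U'_m$ linearly independent; such distinguished sets are automatically open, being finite intersections of subbasic open sets.

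So I would fix a basic open set $\mathcal W=\bigcap_{i=1}^n\ps U_i$ and a subspace $P\in\mathcal W$. For each $i$ choose $v_i\in P\cap U_i$, and let $e_1,\dots,e_m$ be a basis of $\linspan{v_1,\dots,v_n}$, which is a subspace of $P$; in particular $e_1,\dots,e_m\in P$. Applying Lemma~\ref{lemmaJP:linindepopensets} with $S=\{v_1,\dots,v_n\}$, with the neighborhoods $U_{v_i}=U_i$, and with the chosen basis $e_1,\dots,e_m$, one obtains linearly independent neighborhoods $U'_1,\dots,U'_m$ of $e_1,\dots,e_m$ such that, writing $v_i=\sum_j a_{j,i}e_j$, we have $\sum_j a_{j,i}U'_j\subset U_i$ for every $i$.

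It then remains to check that $P\in\bigcap_{j=1}^m\ps U'_j\subset\mathcal W$. The first inclusion is immediate since $e_j\in P\cap U'_j$ for each $j$. For the second, take $Q\in\bigcap_{j=1}^m\ps U'_j$ and pick $w_j\in Q\cap U'_j$; then for each $i$ the vector $\sum_j a_{j,i}w_j$ lies in $\sum_j a_{j,i}U'_j\subset U_i$ and, since $Q$ is a linear subspace, it also lies in $Q$, so $Q\cap U_i\neq\emptyset$, i.e.\ $Q\in\ps U_i$; as $i$ was arbitrary, $Q\in\mathcal W$.

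I do not expect a genuine obstacle here: the entire difficulty — producing linearly independent neighborhoods of a basis that still control the original neighborhoods through the change-of-basis coefficients — has already been isolated in Lemma~\ref{lemmaJP:linindepopensets}, and what remains is a direct verification. The only points deserving a word are the degenerate cases (some $v_i=0$, whence possibly $m<n$, and when all $v_i=0$ one gets the empty collection with $\mathcal W=\Sub A$), but these are absorbed by the same computation, since a vanishing coefficient $a_{j,i}$ contributes $\{0\}\subset U_i$.
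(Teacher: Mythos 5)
Your proposal is correct and follows essentially the same route as the paper: pick $v_i\in P\cap U_i$, extract a basis $e_1,\dots,e_m$ of their span, invoke Lemma~\ref{lemmaJP:linindepopensets} to get linearly independent neighborhoods $U'_j$ with $\sum_j a_{j,i}U'_j\subset U_i$, and verify the two inclusions exactly as the paper does. The only (harmless) cosmetic difference is that the paper takes the basis to be a subfamily of the $v_i$ themselves, whereas you allow an arbitrary basis of $\linspan{v_1,\dots,v_n}$; your remark on the degenerate cases is a welcome extra.
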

\begin{proof}
Given open sets $U_1,\dots,U_n\subset A$ let 
$P\in\bigcap_i\ps U_i$. We want to find 
linearly independent open sets $U_1',\dots,U_m'$ such that
$P\in\bigcap_{j=1}^m \ps U_j'\subset\bigcap_{i=1}^n\ps U_i$.
For each $i$ let $e_i\in P\cap U_i$. We may assume without loss of generality
that there is some $m\leq n$ such that
$e_1,\dots,e_m$ form a basis of 
$\linspan{e_1,\dots,e_n}$. Then we are within the conditions of Lemma~\ref{lemmaJP:linindepopensets} with $S=\{e_1,\dots,e_n\}$, so let
$U_1',\dots,U_m'$ be as in Lemma~\ref{lemmaJP:linindepopensets}. Then we have
\[P\in\bigcap_{j=1}^m \ps U'_j\;,
\]
and we need to show that
\begin{equation}\label{lasteq}
\bigcap_{j=1}^m \ps U_j'\subset\bigcap_{i=1}^n\ps U_i\;.
\end{equation}
Let $Q\in\bigcap_{j=1}^m \ps U_j'$.
For each $j=1,\dots,m$ let $u_j\in Q\cap U'_j$. By Lemma~\ref{lemmaJP:linindepopensets} there are $a_{ij}\in\CC$ such that,
for each $i=1,\dots,n$ we have
\[\sum_{j=1}^m a_{ij}u_j\in\sum_{j=1}^m a_{ij}U_j'\subset U_i\;.\]
Since we also have $\sum_{j=1}^m a_{ij}u_j\in Q$,
it follows that $Q\in \ps U_i$ for all $i$. Therefore we have $Q\in\bigcap_{i=1}^n\ps U_i$, which proves \eqref{lasteq}. \qedhere
\end{proof}

Now we shall use the notation $\upsegment W$, for $W$ any subset of $\Sub A$, to denote the \emph{upper-closure} of $W$ in the inclusion order of $\Sub A$:
\[
\upsegment W=\{V\in\Sub A\st \exists_{V'\in W}\ V'\subset V\}\;.
\]

\begin{theorem}
Let $A$ be a Hausdorff vector space. A basis for the lower Vietoris topology of $\Sub A$ consists of
the collection of all the sets $\upsegment W$ where for some $k\in\NN_{>0}$ the set $W\subset \Gr(k,A)$ 
is open in the Grassmannian topology.
\end{theorem}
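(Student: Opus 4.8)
The plan is to prove the two halves of ``basis'' separately: that each set $\upsegment W$ (for $W$ open in some $\Gr(k,A)$) is lower Vietoris open, and that every lower Vietoris open set is a union of such sets. The bridge between the two topologies is an elementary observation I would record first: if $U_1,\dots,U_k\subset A$ are linearly independent open sets and $W:=\bigl(\bigcap_{i=1}^k\ps{U_i}\bigr)\cap\Gr(k,A)$, then
\[
\bigcap_{i=1}^k\ps{U_i}=\upsegment W\;.
\]
The inclusion $\supseteq$ is immediate, since $L\subset V$ and $L\in\bigcap_i\ps{U_i}$ force $V\in\bigcap_i\ps{U_i}$; for $\subseteq$ one picks $v_i\in V\cap U_i$, uses linear independence of the $U_i$ to make $v_1,\dots,v_k$ independent, and takes $L=\linspan{v_1,\dots,v_k}$, so that $L\subset V$, $\dim L=k$ and $v_i\in L\cap U_i$, whence $L\in W$. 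By Theorem~\ref{grsubspace}, $W$ is open in $\Gr(k,A)$; so, by Lemma~\ref{lemmaJP:basislinindep}, the sets $\upsegment W$ arising this way already form a basis of $\Sub A$, and hence the (a priori larger) collection in the statement contains a basis. It therefore remains only to check that every $\upsegment W$ in that collection is lower Vietoris open.

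For that I would first reduce to the case where $W$ is a basic open set of $\Gr(k,A)$, since upper-closure distributes over unions, $\upsegment\bigl(\bigcup_\alpha W_\alpha\bigr)=\bigcup_\alpha\upsegment W_\alpha$. So fix $k\ge 1$, an open set $\mathcal O\subset\Sub A$, put $W=\mathcal O\cap\Gr(k,A)$, and take $Q\in\upsegment W$, witnessed by $P\in W$ with $P\subset Q$ and $\dim P=k$. Since $P\in\mathcal O$, Lemma~\ref{lemmaJP:basislinindep} supplies linearly independent open sets $U_1,\dots,U_m\subset A$ with $P\in\bigcap_{i=1}^m\ps{U_i}\subset\mathcal O$, and choosing $v_i\in P\cap U_i$ forces $m\le\dim P=k$. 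Extend $v_1,\dots,v_m$ to a basis $v_1,\dots,v_k$ of $P$ --- crucially, all of these vectors lie in $Q$. Applying Lemma~\ref{lemmaJP:linindepopensets} to $S=\{v_1,\dots,v_k\}$ with the chosen neighborhoods $U_i$ for $i\le m$ and $A$ for $m<i\le k$, and with the basis $v_1,\dots,v_k$ of $\linspan S=P$, yields linearly independent open sets $U_1',\dots,U_k'$ with $v_i\in U_i'$ for all $i$ and $U_i'\subset U_i$ for $i\le m$. Set $B=\bigcap_{i=1}^k\ps{U_i'}$; this is lower Vietoris open, and $Q\in B$ because $v_i\in Q\cap U_i'$. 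Finally, for any $R\in B$, picking $w_i\in R\cap U_i'$ gives independent vectors, so $L:=\linspan{w_1,\dots,w_k}$ is a $k$-dimensional subspace of $R$ lying in $B\subset\bigcap_{i=1}^m\ps{U_i'}\subset\bigcap_{i=1}^m\ps{U_i}\subset\mathcal O$; hence $L\in W$ and $R\in\upsegment W$. Thus $Q\in B\subset\upsegment W$, so $\upsegment W$ is open, and the two halves combine to give the claim.

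The only real difficulty is in the second paragraph. The subtlety is that $Q$ may be a large --- possibly infinite-dimensional --- subspace, whereas everything Theorem~\ref{grsubspace} and Lemma~\ref{lemmaJP:basislinindep} hand us lives near the small $k$-dimensional $P$; the task is to inflate a basic neighborhood of $P$ that comes with the ``wrong'' number $m\le k$ of linearly independent slots into one with exactly $k$ slots which (i) still contains $Q$ --- this is why the extending basis of $P$ must be chosen inside $Q$ --- and (ii) is small enough on $\Gr(k,A)$ to stay inside $W$, which is precisely what the shrinking clause ``$U_i'\subset U_i$ for $i\le m$'' produced by Lemma~\ref{lemmaJP:linindepopensets} provides; everything else is bookkeeping. (One caveat worth noting is the degenerate point $\{0\}\in\Sub A$, whose only lower Vietoris neighborhood is $\Sub A$ itself and which lies in no $\upsegment W$ with $k\ge1$; this is immaterial except for the improper open set $\Sub A$.)
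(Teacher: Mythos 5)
Your proof is correct, and its first half (the identity $\bigcap_{i=1}^k\ps{U_i}=\upsegment\bigl(\Gr(k,A)\cap\bigcap_{i=1}^k\ps{U_i}\bigr)$ for linearly independent $U_i$, combined with Theorem~\ref{grsubspace} and Lemma~\ref{lemmaJP:basislinindep}) is exactly the paper's argument. Where you go beyond the paper is your second paragraph: the paper stops after showing that the \emph{special} sets $\upsegment W$ with $W=\Gr(k,A)\cap\bigcap_{i=1}^k\ps{U_i}$ coincide with basic open sets, and never verifies that an \emph{arbitrary} open $W\subset\Gr(k,A)$ has open upper closure --- which is needed for the full collection in the statement, rather than a subcollection of it, to be a basis. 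The subtlety you isolate is genuine: expressing a general open $W\subset\Gr(k,A)$ via Theorem~\ref{grsubspace} and Lemma~\ref{lemmaJP:basislinindep} produces traces $\Gr(k,A)\cap\bigcap_{i=1}^m\ps{U_i}$ with only $m\le k$ slots, and for $m<k$ the upper closure of such a trace is not simply $\bigcap_{i=1}^m\ps{U_i}$ (it carries the extra constraint that the subspace contain a $k$-dimensional member of the trace). Your padding argument --- extending the $m$ witnesses to a basis of $P$, which lies inside $Q$, and invoking Lemma~\ref{lemmaJP:linindepopensets} to produce $k$ linearly independent neighborhoods refining the original $m$ --- closes this correctly, so your write-up is in fact more complete than the paper's. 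Your final caveat about $\{0\}$ is also accurate but is a defect of the statement itself, equally present in the paper's proof: no $\upsegment W$ with $k\ge 1$ contains the point $\{0\}$, whose only open neighborhood is $\Sub A$, so the claim must be read modulo this degenerate point (or with $\Sub A$, as the empty intersection, adjoined to the basis).
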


\begin{proof}
Due to Theorem~\ref{grsubspace} and Lemma~\ref{lemmaJP:basislinindep} it is enough to show that
if $U_1,\dots,U_k\subset A$ are linearly independent open sets, and if we let \[W=\Gr(k,A)\cap \bigcap_{i=1}^k\ps U_i\;,\] then
$\upsegment W=\bigcap\ps U_i$. The inclusion $\upsegment W\subset \bigcap\ps U_i$ is immediate because $W\subset\bigcap\ps U_i$ and the open sets of $\Sub A$ are upwards closed: $\upsegment W\subset\upsegment\bigl({\bigcap\ps U_i}\bigr)=\bigcap\ps U_i$. For the other inclusion, consider
$P\in \bigcap\ps U_i$. Then there are linearly independent vectors $v_1,\ldots,v_k\in A$ such that $v_i\in P\cap U_i$ for all $i$. Let $V=\linspan{v_1,\dots,v_k}$. Then $V\in W$ and $V\subset P$, and thus $P\in\upsegment W$. \qedhere
\end{proof}

\appendix

\section{Appendix}

\begin{lemma}\label{singleton}
Let $A$ be a vector space and $n$ a positive integer. Let also $V,P\subset A$ be linear subspaces such that $V\cap P=\{0\}$ and $\dim(V)=\dim(A/P)=n$. Then $A=V\oplus P$, and the set $V\cap(a+P)$ is a singleton.
\end{lemma}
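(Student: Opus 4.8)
The plan is to treat this as a pure linear-algebra statement, the only hypothesis genuinely needed being the finiteness of $n$. First I would consider the canonical projection $\pi\colon A\to A/P$ and restrict it to $V$, obtaining a linear map $\pi|_V\colon V\to A/P$. The assumption $V\cap P=\{0\}$ is precisely the statement that $\ker(\pi|_V)=\{0\}$, so $\pi|_V$ is injective; since its domain and codomain both have dimension $n<\infty$, it is therefore an isomorphism, in particular surjective. Surjectivity of $\pi|_V$ says exactly that every coset $a+P$ meets $V$, that is, $V+P=A$, and together with $V\cap P=\{0\}$ this yields $A=V\oplus P$.

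For the second assertion I would fix $a\in A$ and use the decomposition $A=V\oplus P$ just obtained to write $a=v+p$ with $v\in V$ and $p\in P$. Then $v=a+(-p)\in a+P$, so $v\in V\cap(a+P)$ and this set is nonempty. To see it is a singleton, suppose $v,v'\in V\cap(a+P)$; writing $v=a+p_1$ and $v'=a+p_2$ with $p_1,p_2\in P$ gives $v-v'=p_1-p_2\in P$, while also $v-v'\in V$, so $v-v'\in V\cap P=\{0\}$ and hence $v=v'$.

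The argument involves no real obstacle; the one point requiring care is the appeal to finite-dimensionality when passing from ``injective'' to ``surjective'' for $\pi|_V$, since this implication (and with it the conclusion $A=V\oplus P$) fails in general without the finiteness assumption.
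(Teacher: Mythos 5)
Your proof is correct and follows essentially the same route as the paper's: both arguments hinge on the observation that $V\cap P=\{0\}$ makes the restriction of the quotient map $A\to A/P$ to $V$ injective, hence an isomorphism by the equality of the finite dimensions, from which $A=V\oplus P$ and the singleton claim follow. The paper merely phrases this via a choice of basis of $V$ and an explicit projection $A\to A/P\to V\hookrightarrow A$, whereas you argue coordinate-free; the content is the same.
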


\begin{proof}
Let $a_1,\ldots, a_n$ be a basis of $V$. Then $a_1+P,\ldots,a_n+P$ is a basis of $A/P$, and there is an isomorphism $\iota:A/P\to V$ defined by $a_i+P\mapsto a_i$, which gives us a projection $\pr P:A\to A$ by composing the quotient $a\mapsto a+P$ with $\iota$ and the inclusion $V\to A$:
\[
A\to A/P\stackrel\iota\to V\to A\;.
\]
Hence, we get $\pr P(A)=V$ and $\ker \pr P=P$ (and thus $A=V\oplus P$), and therefore $V\cap(a+P)=\{\pr P(a)\}$. \qedhere
\end{proof}

\begin{lemma}\label{lemmaJP-linearalgebra}
Let $A$ be a vector space and $n$ a positive integer.
Let $P,Q\subset A$ be linear subspaces such that $\dim(A/P)=\dim(A/Q)=n$. Then there is an
$n$-dimensional subspace $V\subset A$ such that $V\cap P=V\cap Q=\{0\}$.
\end{lemma}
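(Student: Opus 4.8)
The plan is to produce $V$ as the span of a basis $v_1,\dots,v_n$ chosen one vector at a time, each step avoiding a suitable pair of proper subspaces. The only ingredient beyond bookkeeping is the elementary fact that a vector space $A$ is never the union of two proper subspaces: if $W_1,W_2\subsetneq A$ and we pick $w_1\in W_1\setminus W_2$, $w_2\in W_2\setminus W_1$, then $w_1+w_2$ lies in neither $W_1$ nor $W_2$. (What we are really constructing is a common algebraic complement of $P$ and $Q$, since $\dim V=n=\dim(A/P)$ together with $V\cap P=\{0\}$ forces $A=V\oplus P$, and likewise for $Q$.)

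First I would carry out the recursive choice. Suppose $v_1,\dots,v_{i-1}$ have already been selected, where $1\le i\le n$ (for $i=1$ the list is empty), and set
\[
P_{i-1}=\linspan{v_1,\dots,v_{i-1}}+P\;,\qquad Q_{i-1}=\linspan{v_1,\dots,v_{i-1}}+Q\;,
\]
so that $P_0=P$ and $Q_0=Q$. Since $P\subset P_{i-1}\subset A$ and $P_{i-1}/P$ is spanned by $i-1$ vectors, the exact sequence $0\to P_{i-1}/P\to A/P\to A/P_{i-1}\to 0$ gives $\dim(A/P_{i-1})\ge n-(i-1)\ge 1$; hence $P_{i-1}\subsetneq A$, and likewise $Q_{i-1}\subsetneq A$. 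By the fact recalled above I may then pick $v_i\in A\setminus(P_{i-1}\cup Q_{i-1})$.

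Next I would set $V=\linspan{v_1,\dots,v_n}$ and verify the three claims. The vectors $v_1,\dots,v_n$ are linearly independent, because $v_i\notin P_{i-1}\supseteq\linspan{v_1,\dots,v_{i-1}}$ for each $i$; thus $\dim V=n$. For $V\cap P=\{0\}$: if $\sum_{i=1}^n a_iv_i\in P$ with not all $a_i$ zero, let $k$ be the largest index with $a_k\neq 0$; then $a_kv_k=\bigl(\sum_i a_iv_i\bigr)-\sum_{i<k}a_iv_i\in P+\linspan{v_1,\dots,v_{k-1}}=P_{k-1}$, so $v_k\in P_{k-1}$, contradicting the choice of $v_k$. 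Replacing $P$ by $Q$ throughout gives $V\cap Q=\{0\}$, which completes the argument.

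The construction is essentially routine; the one point requiring attention is the codimension bookkeeping, namely confirming that $P_{i-1}$ and $Q_{i-1}$ remain proper for every $i$ up to $i=n$. This is precisely where the hypothesis $\dim(A/P)=\dim(A/Q)=n$ is used, and it is what guarantees that the procedure runs successfully for all $n$ steps.
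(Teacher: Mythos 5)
Your proof is correct, but it takes a genuinely different route from the paper's. The paper produces $V$ in one shot from a structural decomposition: it fixes an isomorphism $\phi:A\to (P\cap Q)\oplus A/(P+Q)\oplus P/(P\cap Q)\oplus Q/(P\cap Q)$, computes $\dim P/(P\cap Q)=\dim Q/(P\cap Q)=n-j$ where $j=\dim A/(P+Q)$, chooses $W$ to be a common complement of $P/(P\cap Q)$ and $Q/(P\cap Q)$ inside their direct sum (essentially the graph of an isomorphism between them), and sets $V=\phi^{-1}\bigl(W\oplus A/(P+Q)\bigr)$. You instead build a basis of $V$ greedily, one vector per step, resting on the elementary fact that a vector space is never the union of two proper subspaces (your argument for that fact implicitly assumes neither subspace contains the other, but if one does the union is just the larger one and is still proper, so the conclusion stands); the codimension bookkeeping $\dim(A/P_{i-1})\ge n-(i-1)$ is exactly what keeps $P_{i-1}$ and $Q_{i-1}$ proper for all $n$ steps, and your verifications of linear independence and of $V\cap P=V\cap Q=\{0\}$ via the largest nonzero coefficient are sound. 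Your approach is more elementary and avoids choosing the global splitting $\phi$; the paper's is more structural, makes the ``common complement'' picture explicit, and is shorter once the decomposition is in place. Both arguments work over an arbitrary field, since avoiding two proper subspaces, unlike avoiding three or more, needs no hypothesis on the field.
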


\begin{proof}
We begin by fixing an isomorphism
\begin{multline*}
\phi: A\to (P\cap Q)\oplus A/(P+Q)\oplus (P+Q)/(P\cap Q)\\
\cong (P\cap Q)\oplus A/(P+Q)\oplus P/(P\cap Q)\oplus Q/(P\cap Q)\;.
\end{multline*}
Notice that $\phi(P)\subset (P\cap Q)\oplus P/(P\cap Q)$ and $\phi(Q)\subset(P\cap Q)\oplus Q/(P\cap Q)$.
Let $j=\dim A/(P+Q)$. Then, since $A/P\cong A/(P+Q)\oplus(P+Q)/P\cong A/(P+Q)\oplus Q/(P\cap Q)$,
we have $\dim Q/(P\cap Q)=n-j$ and, similarly, $\dim P/(P\cap Q)=n-j$.
Now pick an $(n-j)$-dimensional subspace
$W\subset P/(P\cap Q)\oplus Q/(P\cap Q)$
such that $W\cap P/(P\cap Q)=W\cap Q/(P\cap Q)=\{0\}$, and let $V=\phi^{-1}\bigl(W\oplus A/(P+Q)\bigr)$.
Then $V\cap P=V\cap Q=\{0\}$.  \qedhere
\end{proof}

\begin{bibdiv}

\begin{biblist}

\bib{FeldmanMooreI-II}{article}{
  author={Feldman, Jacob},
  author={Moore, Calvin C.},
  title={Ergodic equivalence relations, cohomology, and von Neumann algebras. I, II},
  journal={Trans. Amer. Math. Soc.},
  volume={234},
  date={1977},
  number={2},
  pages={289--359},
  issn={0002-9947},
  review={\MR {0578730 (58 \#28261a)}, \MR {0578730 (58 \#28261b)}},
}

\bib{Fell62}{article}{
  author={Fell, J. M. G.},
  title={A Hausdorff topology for the closed subsets of a locally compact non-Hausdorff space},
  journal={Proc. Amer. Math. Soc.},
  volume={13},
  date={1962},
  pages={472--476},
  issn={0002-9939},
  review={\MR {0139135 (25 \#2573)}},
}

\bib{FD1}{book}{
  author={Fell, J. M. G.},
  author={Doran, R. S.},
  title={Representations of $^*$-algebras, locally compact groups, and Banach $^*$-algebraic bundles. Vol. 1},
  series={Pure and Applied Mathematics},
  volume={125},
  note={Basic representation theory of groups and algebras},
  publisher={Academic Press, Inc., Boston, MA},
  date={1988},
  pages={xviii+746},
  isbn={0-12-252721-6},
  review={\MR {936628 (90c:46001)}},
}

\bib{KR}{article}{
  author={Kruml, David},
  author={Resende, Pedro},
  title={On quantales that classify $C\sp \ast $-algebras},
  language={English, with French summary},
  journal={Cah. Topol. G\'eom. Diff\'er. Cat\'eg.},
  volume={45},
  date={2004},
  number={4},
  pages={287--296},
  issn={1245-530X},
  review={\MR {2108195 (2006b:46096)}},
}

\bib{Kumjian98}{article}{
  author={Kumjian, Alex},
  title={Fell bundles over groupoids},
  journal={Proc. Amer. Math. Soc.},
  volume={126},
  date={1998},
  number={4},
  pages={1115--1125},
  issn={0002-9939},
  review={\MR {1443836 (98i:46055)}},
  doi={10.1090/S0002-9939-98-04240-3},
}

\bib{Mulvey-enc}{article}{
  author={Mulvey, Christopher J.},
  title={Quantales},
  book={ editor={M. Hazewinkel}, title={The Encyclopaedia of Mathematics, third supplement}, publisher={Kluwer Acad. Publ.}, },
  date={2002},
  pages={312--314},
}

\bib{MP1}{article}{
  author={Mulvey, Christopher J.},
  author={Pelletier, Joan Wick},
  title={On the quantisation of points},
  journal={J. Pure Appl. Algebra},
  volume={159},
  date={2001},
  number={2-3},
  pages={231--295},
  issn={0022-4049},
  review={\MR {1828940 (2002g:46126)}},
}

\bib{MP2}{article}{
  author={Mulvey, Christopher J.},
  author={Pelletier, Joan Wick},
  title={On the quantisation of spaces},
  note={Special volume celebrating the 70th birthday of Professor Max Kelly},
  journal={J. Pure Appl. Algebra},
  volume={175},
  date={2002},
  number={1-3},
  pages={289--325},
  issn={0022-4049},
  review={\MR {1935983 (2003m:06014)}},
}

\bib{NT96}{article}{
  author={Nogura, Tsugunori},
  author={Shakhmatov, Dmitri},
  title={When does the Fell topology on a hyperspace of closed sets coincide with the meet of the upper Kuratowski and the lower Vietoris topologies?},
  booktitle={Proceedings of the International Conference on Convergence Theory (Dijon, 1994)},
  journal={Topology Appl.},
  volume={70},
  date={1996},
  number={2-3},
  pages={213--243},
  issn={0166-8641},
  review={\MR {1397079 (97f:54011)}},
  doi={10.1016/0166-8641(95)00098-4},
}

\bib{Paterson}{book}{
  author={Paterson, Alan L. T.},
  title={Groupoids, inverse semigroups, and their operator algebras},
  series={Progress in Mathematics},
  volume={170},
  publisher={Birkh\"auser Boston Inc.},
  place={Boston, MA},
  date={1999},
  pages={xvi+274},
  isbn={0-8176-4051-7},
  review={\MR {1724106 (2001a:22003)}},
}

\bib{RenaultLNMath}{book}{
  author={Renault, Jean},
  title={A groupoid approach to $C^{\ast } $-algebras},
  series={Lecture Notes in Mathematics},
  volume={793},
  publisher={Springer},
  place={Berlin},
  date={1980},
  pages={ii+160},
  isbn={3-540-09977-8},
  review={\MR {584266 (82h:46075)}},
}

\bib{Renault}{article}{
  author={Renault, Jean},
  title={Cartan subalgebras in $C^*$-algebras},
  journal={Irish Math. Soc. Bull.},
  number={61},
  date={2008},
  pages={29--63},
  issn={0791-5578},
  review={\MR {2460017 (2009k:46135)}},
}

\bib{Re07}{article}{
  author={Resende, Pedro},
  title={\'Etale groupoids and their quantales},
  journal={Adv. Math.},
  volume={208},
  date={2007},
  number={1},
  pages={147--209},
  issn={0001-8708},
  review={\MR {2304314 (2008c:22002)}},
}

\bib{Vietoris}{article}{
  author={Vietoris, Leopold},
  title={Bereiche zweiter Ordnung},
  journal={Monatsh. Math. Phys.},
  volume={32},
  date={1922},
  number={1},
  pages={258--280},
  review={\MR {1549179}},
}

\end{biblist}

\end{bibdiv}
\vspace*{5mm}
\noindent {\sc
Centro de An\'alise Matem\'atica, Geometria e Sistemas Din\^amicos
Departamento de Matem\'{a}tica, Instituto Superior T\'{e}cnico\\
Universidade de Lisboa\\
Av.\ Rovisco Pais 1, 1049-001 Lisboa, Portugal}\\
{\it E-mail:} {\sf pmr@math.tecnico.ulisboa.pt}, {\sf jsantos@math.tecnico.ulisboa.pt}

\end{document}